\numberwithin{equation}{section}
\numberwithin{subsection}{section}
\newenvironment{enumeratea}
{\begin{enumerate}[\upshape (a)]}
{\end{enumerate}}
\newenvironment{enumerate1}
{\begin{enumerate}[\upshape (1)]}
{\end{enumerate}}
\newtheorem*{namedtheorem}{\theoremname}
\newcommand{\theoremname}{testing}
\newenvironment{named}[1]{\renewcommand\theoremname{#1}
\begin{namedtheorem}}
{\end{namedtheorem}}
\newtheorem{theorem}{Theorem}[section]
\newtheorem{proposition}[theorem]{Proposition}
\newtheorem{proposition-definition}[theorem]
{Proposition-Definition}
\newtheorem{corollary}[theorem]{Corollary}
\newtheorem{lemma}[theorem]{Lemma}
\theoremstyle{definition}
\newtheorem{definition}[theorem]{Definition}
\newtheorem{notation}[theorem]{Notation}
\newtheorem{examples}[theorem]{Examples}
\newtheorem{remarks}[theorem]{Remarks}
\theoremstyle{remark}
\renewcommand{\mathcal}{\mathscr}
 \newcommand\cB{\mathcal{B}}
\newcommand\cE{\mathcal{E}} 
\newcommand\cG{\mathcal{G}}
\newcommand\cM{\mathcal{M}} \newcommand\cN{\mathcal{N}}
\newcommand\cO{\mathcal{O}}
\newcommand\cU{\mathcal{U}} 
 \newcommand\cX{\mathcal{X}}
\renewcommand\AA{\mathbb{A}} 
\newcommand\CC{\mathbb{C}}
 \newcommand\PP{\mathbb{P}}
 \newcommand\bX{\mathbf{X}}
\newcommand\rA{\mathrm{A}} 
\newcommand\rC{\mathrm{C}}
\newcommand\rM{\mathrm{M}}
\newcommand\rS{\mathrm{S}}
\newcommand\rmc{\mathrm{c}}
 \newcommand\bfp{\mathbf{p}}
\newcommand\arr{\ifinner\to\else\longrightarrow\fi}
\newcommand\arrto{\ifinner\mapsto\else\longmapsto\fi}
\newcommand{\xarr}{\xrightarrow}
\renewcommand\H{\operatorname{H}}
\newcommand\op{^{\mathrm{op}}}
\newcommand{\eqdef}{\mathrel{\smash{\overset{\mathrm{\scriptscriptstyle def}} =}}}
\def\displaytimes_#1{\mathrel{\mathop{\times}\limits_{#1}}}
\def\displayotimes_#1{\mathrel{\mathop{\bigotimes}\limits_{#1}}}
\renewcommand\hom{\operatorname{Hom}}
\newcommand\aut{\operatorname{Aut}}
\newcommand\out{\operatorname{Out}}
\newcommand\spec{\operatorname{Spec}}
\newcommand\pr{\operatorname{pr}}
\newcommand\indlim{\varinjlim}
\newcommand{\cat}[1]{(\mathrm{#1})}
\newcommand{\catset}{(\mathrm{Set})}
\newcommand\double{\mathbin{\rightrightarrows}}
\newcommand{\underisom}{\mathop{\underline{\mathrm{Isom}}}\nolimits}
\newcommand{\underaut}{\mathop{\underline{\mathrm{Aut}}}\nolimits}
\newlength{\ignora}
\newcommand{\mmu}{\boldsymbol{\mu}}
\newcommand{\GL}{\mathrm{GL}}
\newcommand{\PGL}{\mathrm{PGL}}
\DeclareFontFamily{U}{mathx}{\hyphenchar\font45}
\DeclareFontShape{U}{mathx}{m}{n}{
      <5> <6> <7> <8> <9> <10>
      <10.95> <12> <14.4> <17.28> <20.74> <24.88>
      mathx10
      }{}
\DeclareSymbolFont{mathx}{U}{mathx}{m}{n}
\DeclareMathAccent{\widecheck}{0}{mathx}{"71}
\DeclareMathAccent{\wideparen}{0}{mathx}{"75}
\renewcommand{\epsilon}{\varepsilon}
\newcommand{\cha}{\operatorname{char}}
\newcommand{\aff}[1][k]{(\mathrm{Aff}/#1)}
\newcommand{\as}[1][k]{(\mathrm{FAS}/#1)}
\newcommand{\pas}[1][k]{(\mathrm{PFAS}/#1)}
\begin{document}

\title[Fields of moduli and the arithmetic of tame quotient singularities]{Fields of moduli and the arithmetic\\of tame quotient singularities}

\author[Bresciani]{Giulio Bresciani}
\author[Vistoli]{Angelo Vistoli}

\address{Scuola Normale Superiore\\Piazza dei Cavalieri 7\\
56126 Pisa\\ Italy}
\email[Vistoli]{angelo.vistoli@sns.it}
\email[Bresciani]{giulio.bresciani@gmail.com}

\thanks{The second author was partially supported by research funds from Scuola Normale Superiore, project \texttt{SNS19\_B\_VISTOLI}, and by PRIN project ``Derived and underived algebraic stacks and applications''.  The paper is based upon work partially supported by the Swedish Research Council under grant no. 2016-06596 while the second author was in residence at Institut Mittag-Leffler in Djursholm}


\begin{abstract}
	Given a perfect field $k$ with algebraic closure $\overline{k}$ and a variety $X$ over $\overline{k}$, the field of moduli of $X$ is the subfield of $\overline{k}$ of elements fixed by field automorphisms $\gamma\in\operatorname{Gal}(\overline{k}/k)$ such that the twist $X_{\gamma}$ is isomorphic to $X$. The field of moduli is contained in all subextensions $k\subset k'\subset\overline{k}$ such that $X$ descends to $k'$. In this paper we extend the formalism, and define the field of moduli when $k$ is not perfect.
	
	Furthermore, Dèbes and Emsalem identified a condition that ensures that a smooth curve is defined over its field of moduli, and prove that a smooth curve with a marked point is always defined over its field of moduli. Our main theorem is a generalization of these results that applies to higher dimensional varieties, and to varieties with additional structures.

	In order to apply this, we study the problem of when a rational point of a variety with quotient singularities lifts to a resolution. As a consequence, we prove that a variety $X$ of dimension $d$ with a smooth marked point $p$ such that $\aut(X,p)$ is finite, étale and of degree prime to $d!$ is defined over its field of moduli.
\end{abstract}

\maketitle

\section{Introduction}

The concept of \emph{field of moduli} was introduced by Matsusaka in \cite{matsusaka-field-of-moduli}, and considerably clarified by Shimura in \cite{shimura-automorphic}. Suppose that $k$ is a field with algebraic closure $\overline{k}$. Let us assume for simplicity that $k$ is perfect. An algebraic variety $X$, perhaps with additional structure, like a polarization, or a marked point, will be defined over some intermediate field $k\arr \ell \arr \overline{k}$ that is finite over $k$. If $\Gamma$ is the Galois group of $\overline{k}$ over $k$, call $\Delta \subseteq \Gamma$ the subgroup formed by elements $\gamma \in \Gamma$ such that the twist $X_{\gamma}$ is isomorphic to $X$ as a $k$-scheme, possibly with its additional structure. Then $\Delta$ is an open subgroup of $\Gamma$; the field of moduli of $X$ is the fixed subfield $\overline{k}^{\Delta}$. It is contained in every field of definition of $X$.

If $X$ has a finite group of automorphisms, and is one of a class of varieties with finite automorphism groups parametrized by a coarse moduli space $M \arr \spec k$ (for example, smooth curves of genus at least $2$), then the field of moduli has a natural interpretation as the residue field of the image of the morphism $\spec \overline{k} \arr M$ corresponding to $X$.

One basic question is: when is $X$ defined over its field of moduli? This problem has been the subject of a considerable amount of literature over the years.

An important early example is due to Goro Shimura \cite{shimura-field-rationality-abelian}. Let $A_{g}$ be the moduli space of abelian varieties of genus~$g$ over $\CC$, and call $K$ its field of rational functions. Let $X$ be the corresponding generic abelian variety defined over $\overline{K}$; its field of moduli is $K$. Then Shimura proved that $X$ is defined over $K$ if and only $g$ is odd.

In the case that $X$ is a smooth curve, an important advance is due to Pierre Dèbes and Michel Emsalem \cite{debes-emsalem}. 

Let $X$ be as above; assume that the group $\aut X$ of automorphisms of $X$ over $\overline{k}$ is finite. Consider the group $\Delta$ above; for each $\delta \in \Delta$ we have an isomorphism $X_{\delta} \simeq X$, well defined up to an automorphism of $X$ over $\overline{k}$. This descend to an canonical automorphism of $X/\aut X$, defining an action of $\Delta$ on $X$, compatible with it action on $\overline{k}$; by Galois descent this defines a scheme $X^{\rmc}$ over $k$, a form of $X/\aut X$, which we call the \emph{compression} of $X$. If $X$ is a smooth curve, so is $X^{\rmc}$.

\begin{named}{Theorem}[Dèbes--Emsalem]
Assume that $X$ is a smooth curve. If $X^{\rmc}(k) \neq \emptyset$, then $X$ is defined over its field of moduli.
\end{named}

(See \cite{giulio-moduli-divisor} for applications of this result.)

Let us describe the content of this paper.

\subsubsection*{The definition of field of moduli} The definition of field of moduli for an object defined over $\overline{k}$ is present in the literature in two particular cases, as explained above: when $k$ is perfect, and when $X$ is in a class of object having a coarse moduli space. This last hypothesis is not a very natural one: for example, one could be interested in the field of moduli of non-polarized abelian varieties, or K3 surfaces. In \S\ref{sec:field-moduli} we give a somewhat more general and flexible formalism for defining fields of moduli and residual gerbes under very weak hypotheses. In particular we adapt it to non-perfect fields and objects with non-reduced automorphism group schemes, using the fppf topology instead of the Galois group.

In \S\ref{sec:nonabelian} we draw some consequences using results in the literature; this part is certainly well known to the experts, and essentially contained in \cite{debes-emsalem}; but it seems useful to have a detailed explanation in our generality.

In the rest of the introduction we will assume $\cha k = 0$; this will simplify the statements considerably. We refer to the main body of the paper for precise statements in arbitrary characteristic.

\subsubsection*{The Dèbes--Emsalem theorem in arbitrary dimension} If $X$ is as in the introduction, if $X$ is singular, or higher-dimensional, it is not true that if $X^{\rmc}(k) \neq \emptyset$, then $X$ is defined over its field of moduli. Our main result (Theorem~\ref{thm:moduli}) is the following: let $\widetilde{X}$ be a resolution of singularities of $X^{\rmc}$. If $\widetilde{X}(k) \neq \emptyset$, then $X$ is defined over its field of moduli. 

When $X$ is a smooth curve, we have that the compression $X^{\rmc}$ is smooth, so this recovers the result of Dèbes--Emsalem. Dèbes and Emsalem mention the fact that their methods can be generalized to curves with a structure such as pointed curves \cite[Remark 3.2.(b)]{debes-emsalem}, but they do not give details on how to do it. So in dimension $1$, we are essentially clarifying what curves with a structure are, and checking that the theorem of Dèbes--Emsalem holds for them. More importantly, we are able to generalize their result to arbitrary dimension.

Dèbes and Emsalem also observed that a suitable form of their theorem implies that every pointed curve of genus $\ge 2$ is defined over its field of moduli \cite[Corollary~5.4]{debes-emsalem}. This fails, however, in higher dimension: smooth pointed varieties are not necessarily defined over their fields of moduli. An example is Shimura's result on generic abelian varieties explained in the introduction. In order to apply the result above we need to ensure that a rational point of the compression $X^{\rmc}$ lifts to a resolution of singularities.

More precisely, we need conditions that ensure that a resolution of singularities $\widetilde{X} \arr X^{\rmc}$ has a rational point. In the case of pointed varieties this translates into the following question. Suppose that $X$ is a variety over $\overline{k}$ and $G$ is a finite group acting on $X$ with a smooth fixed point $\overline{p} \in X(\overline{k})$. Let $X^{\rmc}$ be a form of $X/G$ defined over $k$, with a rational point $p \in X^{\rmc}(k)$ lifting to $\overline{p}$. If $\widetilde{X} \arr X^{\rmc}$ is a resolution of singularities, under what conditions does it follow that there is a rational point of $\widetilde{X}$ lying over $p$?

\subsubsection*{The arithmetic of quotient singularities} In \S\ref{sec:arithmetic} we introduce two related concepts. 

One is that of \emph{$R$-singularity} (Definition~\ref{def:R-singularity}). An $R$-singularity is a pair $(S,s)$, where $S$ is a variety over a field $K$ with quotient singularities and $s \in S(K)$, such that, in particular, if $k \subseteq K$ is a subfield, $(S', s')$ is a form of $(S,s)$ defined over $k$, and $\widetilde{S}' \arr S$ is a resolution of singularities, then $\widetilde{S}'$ has a $k$-rational point over $s'$.

The other key definition is that of $R_{d}$-group (Definition~\ref{def:Rd}): if $d$ is a positive integer, a finite group $G$ is $R_{d}$ if whenever it acts faithfully on a smooth variety $X$ with a fixed rational point $x \in X(k)$, the pair $(X/G, [x])$, where $[x]$ is the image of $x$, is an $R$-singularity. It follows from our main theorem that if $X$ is a smooth variety over $\overline{k}$ and $\overline{p} \in X(\overline{k})$ is a $\overline{k}$-point, and $\aut (X, \overline{p})$ is an $R_{d}$ group, then $(X, \overline{p})$ is defined over its field of moduli.

Not all finite groups are $R_{d}$: for example, a cyclic group of order~$2$ is not $R_{d}$ for any $d \geq 2$. Here we present two results, showing that this is not an empty definition.

The first (Theorem~\ref{thm:Rd-1}) shows that there are infinitely many groups that are $R_{d}$ for all $d$. The second (Theorem~\ref{thm:Rd-2}) says that any finite group of order prime to $d!$ is $R_{d}$. As a consequence (Theorem~\ref{thm:pointed!}), a $d$-dimensional variety $X$ with a smooth marked point $p\in X$ such that $\aut(X,p)$ is finite of degree prime to $d!$ is defined over its field of moduli.

There is much more that one could say about $R_{d}$-groups. The forthcoming paper \cite{giulio-singularities} will contain a complete classification of $R_{2}$-groups. The case $d > 2$ seems much harder; hopefully it will be the subject of further work.

Another result along these lines is Corollary~\ref{cor:isolated-C2}, stating that if $X$ is an odd-dimensional variety over $\overline{k}$ with a smooth marked point $\overline{p} \in X(\overline{k})$, and the automorphism group of $(X,\overline{p})$ is cyclic of order $2$ and has $\overline{p}$ as an isolated fixed point, then $(X,\overline{p})$ is defined over its field of moduli. This would be false without assuming that $\overline{p}$ is an isolated fixed point, as the cyclic group $\rC_{2}$ is not $R_{d}$ for any $d \geq 2$. This vastly generalizes Shimura's result that odd dimensional generic principally polarized abelian varieties are defined over their field of moduli.

\medskip

For the second and third part, the fundamental tool is the Lang--Nishimura theorem for tame stacks proved in \cite{giulio-angelo-valuative} (see Theorem~\ref{LN}).

\subsection*{Acknowledgments} We are grateful to Dan Abramovich and Pierre Dèbes for some very useful discussions, and to J\'anos Koll\'ar for pointing out Lemma~\ref{lem:abelian} to us. We thank all of them for their interest in our work.

\section{Notations and conventions}

We will follow the conventions of \cite{knutson} and \cite{laumon-moret-bailly}; so the diagonals of algebraic spaces and algebraic stacks will be separated and of finite type. In particular, every algebraic space will be \emph{decent}, in the sense of \cite[Definition~03I8]{stacks-project}.

We will follow the terminology of \cite{dan-olsson-vistoli1}: a \emph{tame stack} is an algebraic stack $X$ with finite inertia, such that its geometric points have linearly reductive automorphism group. This is equivalent to requiring that $X$ is étale locally over its moduli space a quotient by a finite, linearly reductive group scheme \cite[Theorem 3.2]{dan-olsson-vistoli1}.

If $k$ is a field and $G \arr \spec k$ is a group scheme, we denote by $\cB_{k}G$ the classifying stack of $G$, whose objects are $G$-torsors.

\section{Fields of moduli}\label{sec:field-moduli}

Let $k$ be a field, $\aff$ the category of affine $k$-schemes. All stacks will be fppf stacks over $\aff$. If $\cM$ is such a stack, and $R$ is a $k$-algebra, we set $\cM(R) \eqdef \cM(\spec R)$; if $R \arr S$ is a morphism of $k$-algebras and $\xi$ is an object of $\cM(R)$, we denote by $\xi_{S}$ the pullback of $\xi$ to $\cM(S)$ via the induced morphism $\spec S \arr \spec R$.

Recall that a stack $\cM \arr \aff$ is \emph{locally finitely presented} if whenever $\{A_{i}\}_{i \in I}$ is a filtered inductive system of $k$-algebras, the induced functor $\indlim_{i}\cM(A_{i}) \arr \cM(\indlim_{i}A_{i})$ is an equivalence. In particular we have the notion of a locally finitely presented functor $\aff \arr \cat{Set}$.

Suppose that $K$ is an extension of $k$. We define, as usual, the automorphisms functor
   \[
   \underaut_{K}\xi\colon \aff[K]\op \arr \cat{Grp}\,,
   \]
where $\cat{Grp}$ is the category of groups, as the functor sending an affine $K$-scheme $S$ into the group of automorphisms of the pullback $\xi_{S}$. 

If $\xi$ and $\eta$ are two objects of $\cM(K)$, we denote by
   \[
   \underisom_{K\otimes_{k}K}(\pr_{1}^{*}\xi, \pr_{2}^{*}\eta)\colon \aff[S]\op \arr \cat{Set}
   \]
the functor of isomorphisms of the pullbacks of $\xi$ and $\eta$ along the two projections $\pr_{1}$, $\pr_{2}\colon \spec K \times_{\spec k} \spec K = \spec(K\otimes_{k}K) \arr \spec K$.

If $\cM$ is locally finitely presented, the two functors above are locally finitely presented. Recall that, in our terminology, algebraic spaces are quasi-separated: under this assumption, group algebraic spaces are separated schemes \cite[\href{https://stacks.math.columbia.edu/tag/08BH}{Tag 08BH}, \href{https://stacks.math.columbia.edu/tag/0B8G}{Tag 0B8G}]{stacks-project}. In particular, the functor $\underaut_{K}\xi$ is an algebraic space of finite type if and only if it is a group scheme of finite type.

\begin{lemma}
Let $\cM \arr \aff$ be a locally finitely presented fppf stack, $K$ an extension of $k$, $\xi$ and $\eta$ two objects of $\cM(K)$. Let $K'$ be an algebraic extension of $K$; then we have the following two equivalences.

\begin{enumerate1}

\item The functor $\underisom_{K\otimes_{k}K}(\pr_{1}^{*}\xi, \pr_{2}^{*}\eta)$ is an algebraic space of finite type over $K$ if and only if $\underisom_{K'\otimes_{k}K'}(\pr_{1}^{*}\xi_{K'}, \pr_{2}^{*}\eta_{K'})$ is an algebraic space of finite type over $K'$.

\item The functor $\underaut_{K}\xi$ is a group scheme of finite type if and only if $\underaut_{K'}\xi'$ is a group scheme of finite type.

\end{enumerate1}
\end{lemma}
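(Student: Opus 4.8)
The two statements have the same flavor: descent/ascent of the property "algebraic space (or group scheme) of finite type" along an algebraic extension $K \subseteq K'$. The key point is that an algebraic extension is a filtered colimit of finite extensions, and finite extensions are faithfully flat, quasi-compact (even finite), and of finite presentation. So the plan is to reduce to the case where $K'/K$ is finite, and then use fppf descent for the property of being an algebraic space of finite type.

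\emph{First}, I would handle the reduction to the finite case. Since $\cM$ is locally finitely presented, so are the functors $\underisom_{K\otimes_k K}(\pr_1^*\xi,\pr_2^*\eta)$ and $\underaut_K\xi$, as noted in the excerpt. Writing $K' = \indlim K_i$ over the filtered system of finite subextensions $K \subseteq K_i \subseteq K'$, the functor $\underisom_{K'\otimes_k K'}(\ldots)$ is the base change to $K'$ of $\underisom_{K\otimes_k K}(\ldots)$, and an algebraic space of finite type over $K$ pulls back to one over $K'$; this gives the "only if" direction in both (1) and (2) immediately (base change of an algebraic space of finite type, resp. a group scheme of finite type, along $\spec K' \to \spec K$). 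For the "if" direction, suppose $\underisom_{K'\otimes_k K'}(\ldots)$ is an algebraic space of finite type over $K'$. By the local finite presentation of the functor and standard limit arguments (e.g. \cite[Tag 07SK, Tag 084Q]{stacks-project}), the property "is an algebraic space of finite type" descends from $K' = \indlim K_i$ to some finite subextension $K_i$: more precisely, since the functor over $K'$ is a finitely presented algebraic space, it is already defined over some $K_i$, i.e. $\underisom_{K_i\otimes_k K_i}(\ldots)$ is an algebraic space of finite type over $K_i$. So we may assume $K'/K$ is finite.

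\emph{Second}, with $K'/K$ finite, hence faithfully flat and quasi-compact, I would invoke fppf descent. For part (1): the morphism $\spec(K'\otimes_k K') \to \spec(K\otimes_k K)$ is faithfully flat and of finite presentation, and $\underisom_{K'\otimes_k K'}(\pr_1^*\xi_{K'},\pr_2^*\eta_{K'})$ is the base change of $\underisom_{K\otimes_k K}(\pr_1^*\xi,\pr_2^*\eta)$ along it. The property of a functor being an algebraic space is fppf-local on the base \cite[Tag 04SK]{stacks-project} (any fppf sheaf which becomes an algebraic space after a faithfully flat quasi-compact base change is an algebraic space; one needs the functor to be an fppf sheaf, which it is, being a Hom/Isom functor of objects in an fppf stack), and "finite type" also descends along faithfully flat quasi-compact morphisms \cite[Tag 02KZ]{stacks-project}. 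For part (2): the same argument applies with $\spec K' \to \spec K$ in place of $\spec(K'\otimes_k K')\to\spec(K\otimes_k K)$, giving that $\underaut_K\xi$ is an algebraic space of finite type over $K$; and as recalled in the excerpt, under our quasi-separatedness conventions a group algebraic space of finite type is automatically a group scheme of finite type.

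\emph{The main obstacle} I expect is the limit/spreading-out step: making precise that a finitely presented algebraic space over the colimit $K' = \indlim K_i$ actually arises by base change from a finitely presented algebraic space over some $K_i$ — and, crucially, that this descended object coincides with $\underisom_{K_i\otimes_k K_i}(\ldots)$ rather than just being abstractly isomorphic to $\underisom_{K'\otimes_k K'}(\ldots)$ over $K'$. This is where the hypothesis that $\cM$ (hence the Isom functor) is \emph{locally finitely presented} does the work: it forces $\underisom_{K\otimes_k K}(\ldots) = \indlim_i \underisom_{K_i\otimes_k K_i}(\ldots)$ as functors, so that once the colimit is an algebraic space of finite type, the standard limit formalism for algebraic spaces \cite[Tag 07SK]{stacks-project} identifies it with the base change of one of the $\underisom_{K_i\otimes_k K_i}(\ldots)$. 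Everything else is a routine application of fppf descent.
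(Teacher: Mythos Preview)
Your proposal is correct and follows essentially the same approach as the paper. The paper packages the argument slightly differently, by first isolating an auxiliary lemma: for a general commutative ring $R$ and a filtered system $\{R_i\}$ of finitely presented $R$-algebras with faithfully flat transition maps and colimit $R'$, an fppf sheaf $F$ over $R$ that is locally of finite presentation is a finitely presented algebraic space over $R$ if and only if $F_{R'}$ is one over $R'$; then it applies this with $R = K\otimes_k K$, $R' = K'\otimes_k K'$ for part (1) and $R = K$, $R' = K'$ for part (2). The proof of that auxiliary lemma is exactly your two steps (Artin's theorem for the finitely presented case, plus the spreading-out argument), and the ``main obstacle'' you flag---matching the descended algebraic space with the actual $\underisom$ functor at some finite stage---is precisely where the paper spends its effort, using the same combination of the sheaf condition and local finite presentation of $F$ that you invoke.
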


Both part are immediate consequences of the following, applied to the cases $R = K$ $R' = K'$, and $R = K\otimes_{k}K$, $R' = K'\otimes_{k}K'$.

\begin{lemma}
Let $R$ be a commutative ring, $\{R_{i}\}_{i \in I}$ a filtered inductive system of finitely presented $R$-algebras with faithfully flat transition functions. Set $R' \eqdef \indlim_{i}R_{i}$. Let $F\colon \aff[R]\op \arr \catset$ be an fppf sheaf that is locally of finite presentation, and call $F'$ the composite $\aff[R']\op \arr \aff[R]\op \arr \catset$, where the functor $\aff[R']\op \arr \aff[R]\op$ is given by restriction of scalars. Then $F$ is a finitely presented algebraic space over $R$ if and only if $F'$ is a finitely presented algebraic space over $R'$.
\end{lemma}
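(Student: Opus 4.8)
The forward implication is immediate: regarded as a functor on $R'$\dash algebras, $F'$ is canonically the base change $F\times_{\spec R}\spec R'$, and both being an algebraic space and being of finite presentation are stable under base change. So the content is the converse, and the plan is: (1) use the ``locally of finite presentation'' behaviour of $F$ to descend the algebraic space structure from $R'$ to one of the $R_i$; (2) descend from $R_i$ down to $R$ by faithfully flat descent.

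\smallskip

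\emph{Step (1).} Assume $F'=F|_{R'}$ is a finitely presented algebraic space over $R'$. Fix an \'etale atlas $U'\arr F'$ with $U'$ an affine $R'$\dash scheme of finite presentation, so that $F'$ is presented by the \'etale equivalence relation $U'\times_{F'}U'\double U'$. Since $R'=\varinjlim_i R_i$ and everything in sight is of finite presentation, the limit theory of schemes and algebraic spaces of \cite{stacks-project} descends this presentation: for some index $i$ there are a finitely presented affine $R_i$\dash scheme $U_i$ and an \'etale equivalence relation on it, base\dash changing along $R_i\arr R'$ to $U'\times_{F'}U'\double U'$. The delicate point is that the descended data must be compatible with $F$ itself, not merely with the abstract algebraic space $F'$. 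This is exactly where the hypothesis on $F$ enters: the atlas $U'\arr F'$ is an element of $F(U')$, and since $U'$ is a cofiltered limit $\varprojlim_{j\ge i}(U_i\times_{R_i}R_j)$ of affine $R$\dash schemes along affine transition morphisms, the local finite presentation of $F$ yields $F(U')=\varinjlim_{j\ge i}F(U_i\times_{R_i}R_j)$; hence the atlas, and likewise the equivalence relation and the relevant properties (\'etale, surjective, monomorphism, isomorphism) of the structure morphisms, are already defined over some $R_i$. Therefore $F|_{R_i}$ is the quotient of an \'etale equivalence relation on a scheme, i.e.\ a finitely presented algebraic space over $R_i$, for $i$ large enough.

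\smallskip

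\emph{Step (2).} The morphism $\spec R_i\arr\spec R$ is of finite presentation by hypothesis and faithfully flat (it is so in the situations where the lemma is applied), hence an fppf covering. Since $F$ is an fppf sheaf on $\aff[R]$, it is the descent along this covering of its restriction $F|_{R_i}$ equipped with the canonical descent datum over $\spec(R_i\otimes_R R_i)$. As fppf descent data for algebraic spaces are effective, this descent is an algebraic space over $R$; by uniqueness of descent it is (isomorphic to) $F$, and since finite presentation over the base is fppf\dash local, $F$ is a finitely presented algebraic space over $R$.

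\smallskip

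The main obstacle is Step (1). The underlying fact --- a finitely presented algebraic space over $\varinjlim_i R_i$ descends to some $R_i$ --- is standard, but one must descend not merely $F'$ but a full presentation of it \emph{coherently with the fixed functor $F$}, and verify that all the finiteness and flatness properties of the structure maps survive at a finite stage. This bookkeeping is precisely where the two finiteness hypotheses --- the $R_i$ of finite presentation over $R$, and $F$ locally of finite presentation --- are used, and it is a somewhat lengthy but entirely routine application of the limit machinery of \cite{stacks-project}.
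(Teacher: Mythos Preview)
Your argument is correct and follows the same two-step plan as the paper: descend the algebraic-space structure from $R'$ to some $R_i$ by limit arguments, then from $R_i$ to $R$ by Artin's theorem. The only difference is organizational, in Step~(1): instead of descending the atlas and equivalence relation together and arguing they present $F|_{R_i}$, the paper first descends $F'$ as an abstract finitely presented algebraic space $G$ over some $R_i$, then separately descends the isomorphism $G_{R'}\simeq F'$ to a morphism $G_{R_i}\to F_i$ by writing $\hom_S(G_S,F_S)$ as the equalizer of $F(U_S)\rightrightarrows F(V_S)$ for an affine \'etale presentation $V\to U\times_G U\to U\to G$, which commutes with the filtered colimit since $F$ is locally of finite presentation; the verification that this morphism is an isomorphism is, as in your sketch, declared ``straightforward and left to the reader''. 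This separation makes the role of the hypothesis on $F$ slightly more transparent, but the content is the same. Your parenthetical that $\spec R_i\to\spec R$ must be faithfully flat for Step~(2) is well taken; the paper uses this implicitly when invoking Artin's theorem.
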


\begin{proof}
If $F$ is a finitely presented algebraic space then $F' = \spec R' \times_{\spec R}F$ is also a finitely presented algebraic space. So, assume that $F'$ is a finitely presented algebraic space.

If $R'$ is finitely presented over $R$, then the result follows from Artin's theorem \cite[Corollaire~10.4]{laumon-moret-bailly}. So, denote by $F_{i}$ the pullback of $F$ to $\spec R_{i}$; it is enough to show that $F_{i}$ is a finitely presented algebraic space for some $i$. Since $F'$ is finitely presented, it will be obtained by pullback from a finitely presented algebraic space $G$ over some $R_{i}$; we can replace $R,F$ with $R_{i},F_{i}$, and assume that $G$ is defined over $R$. Then it is enough to show that for some $i$, the pullback of $G$ to $\spec R_{i}$ is isomorphic to $F_{i}$.

We claim that the isomorphism $G_{R'} \simeq F_{R'} = F'$ comes from a morphism $G_{R_{i}} \arr F_{i}$ for some $i$. For this, choose an affine scheme $U$ with an étale surjective map $U \arr G$, and an affine scheme $V$ with an étale surjective map $V \arr U\times_{G}U$. Since $F$ is an fppf sheaf, for any $R$-algebra $S$ the set of morphism $\hom_{S}(G_{S},F_{S})$ is the equalizer of the two maps $F(U_{S}) \arr F(V_{S})$; since $F$ is locally of finite presentation, and filtered colimits commute with equalizers, we easily get $\hom_{R'}(G_{R'},F_{R'}) = \indlim_{i}\hom_{R_{i}}(G_{R_{i}}, F_{i})$, hence the thesis. So, by replacing $R$ with $R_{i}$ we may assume that there exists a morphism $G \arr F$ that pulls back to the isomorphism $G_{R'} \simeq F_{R'}$; we need to check that this is an isomorphism. This is straightforward, and left to the reader.
\end{proof}

\subsection{The residual gerbe}

\begin{notation}
In this subsection we will always use the following notation: $\cM$ will be an fppf stack over $\aff$, locally of finite presentation, $K$ an algebraic extension of $k$, $\xi$ an object of $\cM(K)$. We will also interpret $\xi$ as a morphism $\xi\colon \spec K \arr \cM$.

\end{notation}

Assume that $K$ is finite over $k$. We define the \emph{residual gerbe} $\cG_{\xi}$ of $\xi$ in $\cM$ as in \cite[\S11]{laumon-moret-bailly}, that is, we take $\cG_{\xi}$ to be the fppf image of $\xi$ in $\cM$. In other words, $\cG_{\xi}$ is the full fibered subcategory of $\cM$ such that an object $\eta$ in $\cM(S)$ is in $\cG_{\xi}(S)$ if there exists an fppf cover $S' \arr S$ and a morphism of $k$-schemes $S' \arr \spec K$ such that the pullback $\eta_{S'}$ and $\xi_{S'}$ are isomorphic in $\cM(S')$. It is immediate to check that $\cG_{\xi}$ is an fppf stack.

Notice that by definition $\xi\colon \spec K \arr \cM$ factors through $\cG_{\xi}$; in fact $\cG_{\xi} \subseteq \cM$ is the smallest fppf substack of $\cM$ through which $\xi$ factors.

\begin{lemma}\label{lem:same-gerbe-finite}
Let $L$ be a finite extension of $K$. Then $\cG_{\xi_{L}} = \cG_{\xi}$.
\end{lemma}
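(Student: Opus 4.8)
The statement is that for $L/K$ a finite extension, the residual gerbes satisfy $\cG_{\xi_L} = \cG_\xi$ inside $\cM$. The plan is to prove the two inclusions of fibered subcategories separately, using the characterization of $\cG_\xi$ as the smallest fppf substack of $\cM$ through which $\xi$ factors. The inclusion $\cG_{\xi_L} \subseteq \cG_\xi$ is the easy direction: since $\xi_L$ is by definition the pullback of $\xi$ along $\spec L \arr \spec K$, the morphism $\xi_L \colon \spec L \arr \cM$ factors through $\cG_\xi$ (as $\xi$ does, and $\cG_\xi$ is a substack closed under pullback). By minimality of $\cG_{\xi_L}$, we get $\cG_{\xi_L} \subseteq \cG_\xi$.

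For the reverse inclusion $\cG_\xi \subseteq \cG_{\xi_L}$, by minimality of $\cG_\xi$ it suffices to check that $\xi \colon \spec K \arr \cM$ factors through $\cG_{\xi_L}$, i.e.\ that $\xi$, viewed as an object of $\cM(K)$, lies in $\cG_{\xi_L}(K)$. By the defining description of $\cG_{\xi_L}$, I must produce an fppf cover $S' \arr \spec K$ together with a $k$-morphism $S' \arr \spec L$ such that $\xi_{S'} \simeq (\xi_L)_{S'}$ in $\cM(S')$. The natural candidate is $S' = \spec L$ itself, with the structure morphism $\spec L \arr \spec K$: this is faithfully flat and of finite presentation since $L/K$ is a finite (hence finite free, as $K$ is a field) extension, so it is an fppf cover. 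As the $k$-morphism $S' \arr \spec L$ take the identity. Then $\xi_{S'} = \xi_L = (\xi_L)_{S'}$ on the nose, so the condition is satisfied trivially. Hence $\xi \in \cG_{\xi_L}(K)$, and therefore $\cG_\xi \subseteq \cG_{\xi_L}$.

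Combining the two inclusions gives $\cG_{\xi_L} = \cG_\xi$ as fibered subcategories of $\cM$. I do not anticipate a genuine obstacle here; the only point requiring a moment's care is that $\spec L \arr \spec K$ is indeed an fppf cover (finiteness of $L/K$, together with flatness of field extensions, gives this), and that the minimality property of residual gerbes — which is recorded in the excerpt right after the definition — is exactly what licenses both reductions. Everything else is a formal manipulation of the universal property.
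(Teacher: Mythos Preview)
Your proof is correct and is essentially an expanded version of the paper's one-line argument, which simply notes that $\spec L \arr \spec K$ is an fppf cover. Both arguments hinge on that single observation; you have just unpacked the two inclusions that follow from it via the minimality characterization of the residual gerbe.
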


\begin{proof}
This is clear from the fact that $\spec L \arr \spec K$ is an fppf cover.
\end{proof}

In the general case $K$ is an \emph{algebraic} extension of $k$, and $\xi$ an object of $\cM(K)$ as before. Since $\cM$ is locally of finite presentation, there is a factorization $\spec K \arr \spec L \xarr{\xi'}$ of $\xi\colon \spec K \arr \cM$ with $L$ finite over $k$; we define $\cG_{\xi}$ to be $\cG_{\xi'}$. Because of Lemma~\ref{lem:same-gerbe-finite}, this is independent of the factorization. It is immediate to show that the analogue of Lemma~\ref{lem:same-gerbe-finite} holds when $K/k$ and $L/K$ are only assumed to be algebraic.

\begin{proposition}\label{prop:same-gerbe}
Let $L$ be an algebraic extension of $K$. Then $\cG_{\xi_{L}} = \cG_{\xi}$.
\end{proposition}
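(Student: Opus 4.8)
The plan is to reduce the general algebraic case to the finite case, where everything is already known. Recall that by construction $\cG_\xi$ was defined via a factorization $\spec K \arr \spec L \xarr{\xi'} \cM$ with $L/k$ finite, by setting $\cG_\xi \eqdef \cG_{\xi'}$, and that this is well defined by Lemma~\ref{lem:same-gerbe-finite} together with its stated extension to algebraic extensions. So I first choose, using that $\cM$ is locally of finite presentation, a finite subextension $k \subseteq L \subseteq K$ together with an object $\xi'$ of $\cM(L)$ such that $\xi \simeq \xi'_K$; by definition $\cG_\xi = \cG_{\xi'}$.

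Next I deal with $\xi_L$, i.e.\ the pullback of $\xi$ along $\spec L \arr \spec K$. Since $L$ is algebraic over $K$ and $K$ is algebraic over $k$, the extension $L$ is algebraic over $k$; because $\cM$ is locally of finite presentation, the object $\xi_L \in \cM(L)$ descends to some object $\eta'$ of $\cM(M)$ for a subfield $k \subseteq M \subseteq L$ finite over $k$. After enlarging $M$ (still keeping it finite over $k$) I may assume $M$ contains the image of the original finite field $L' \eqdef L$ (rename to avoid clash) — concretely, I enlarge so that $M$ contains both finite fields appearing, so that $\xi'$ and $\eta'$ both pull back to $\cM(M)$ and their pullbacks $\xi'_M$, $\eta'_M$ agree with the pullback of $\xi$ (resp.\ $\xi_L$) to $M$, hence agree with each other. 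Then by Lemma~\ref{lem:same-gerbe-finite} applied to the finite extension $M/($the finite field carrying $\xi'{}$), we get $\cG_{\xi'} = \cG_{\xi'_M} = \cG_{\eta'_M} = \cG_{\eta'} = \cG_{\xi_L}$, where the middle equality is just $\xi'_M \simeq \eta'_M$ and the unlabeled equalities are again Lemma~\ref{lem:same-gerbe-finite}.

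Finally, combining $\cG_\xi = \cG_{\xi'}$ with the chain above gives $\cG_\xi = \cG_{\xi_L}$, as desired.

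I expect the only real subtlety to be bookkeeping with the various finite subfields: one must be careful that all the relevant objects can be spread out to a \emph{single} finite extension of $k$ over which the isomorphisms $\xi'_M \simeq \eta'_M$ already hold, and this is exactly where local finite presentation of $\cM$ (applied both to objects and to isomorphisms, as in the preceding lemma) is used. Once everything lives over one finite field $M$, the statement collapses to repeated applications of Lemma~\ref{lem:same-gerbe-finite}, which is the content that has already been established.
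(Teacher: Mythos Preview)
Your argument is correct in substance, but it is considerably more elaborate than necessary and the notation gets tangled (you use $L$ both for the algebraic extension in the statement and for the finite intermediate field in the definition; the mid-proof renaming does not fully untangle this, and the phrase ``agree with the pullback of $\xi$ to $M$'' does not literally make sense since $K$ need not be contained in $M$).

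The paper's proof is a one-liner that avoids all of this. Pick a finite subextension $k \subseteq E \subseteq K$ and $\xi' \in \cM(E)$ with $\xi \simeq \xi'_K$; then $\cG_\xi = \cG_{\xi'}$ by definition. Since $E \subseteq K \subseteq L$, the composite $\spec L \arr \spec E \xarr{\xi'} \cM$ is already a valid factorization of $\xi_L$ through a field finite over $k$, so \emph{by definition} $\cG_{\xi_L} = \cG_{\xi'}$ as well. You instead choose a second, a priori unrelated finite field $M$ and object $\eta'$ for $\xi_L$, and then spend the rest of the proof showing $\cG_{\xi'} = \cG_{\eta'}$ by descending the isomorphism $\xi'_L \simeq \eta'_L$ to a common finite extension. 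That step is fine, but it is exactly the well-definedness of the construction that was already established before the Proposition; so you are re-deriving a known fact rather than simply invoking the definition with the obvious choice of factorization.
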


\begin{proof}
Let $E$ be an intermediate extension $k \subseteq E \subseteq K$, finite over $k$, such that $\xi\colon \spec K \arr \cM$ factors as $\spec K \arr \spec E \xarr{\xi'} \cM$. By definition we have $\cG_{\xi} = \cG_{\xi'} = \cG_{\xi_{L}}$.
\end{proof}

\begin{definition}\label{def:algebraic}
We say that $\xi$ is \emph{algebraic} if the residual gerbe $\cG_{\xi}$ is an algebraic stack.
\end{definition}

From Proposition~\ref{prop:same-gerbe} we obtain the following.

\begin{proposition}
Let $L$ be an algebraic extension of $K$. Then $\xi$ is algebraic if and only if $\xi_{L}$ is algebraic.
\end{proposition}

\begin{proposition}\label{prop:algebraic}
The following conditions are equivalent.

\begin{enumerate1}

\item The object $\xi$ is algebraic.

\item The functor
   \[
   \underisom_{K\otimes_{k}K}(\pr_{1}^{*}\xi, \pr_{2}^{*}\xi)\colon \aff[K\otimes_{k}K]\op \arr \cat{Set}
   \]
is an algebraic space of finite type.

\end{enumerate1}
\end{proposition}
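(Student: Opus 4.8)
The plan is to reduce to the case where $K/k$ is finite, then dispose of the implication (1)$\,\Rightarrow\,$(2), and finally prove (2)$\,\Rightarrow\,$(1) by recognising $\cG_\xi$ as the quotient of a groupoid whose chart $\spec K$ is manifestly flat. Set $R\eqdef\underisom_{K\otimes_{k}K}(\pr_1^*\xi,\pr_2^*\xi)$. By the very definition of the residual gerbe we may replace $K$ by a finite subextension over which $\xi$ descends, and by the first lemma of \S\ref{sec:field-moduli} this changes neither side of the asserted equivalence; so we assume $K/k$ finite. Because $\cG_\xi$ is the fppf image of $\xi$, the morphism $\xi\colon\spec K\arr\cG_\xi$ is an epimorphism of fppf stacks and $\spec K\times_{\cG_\xi}\spec K=R$, with the two structure maps $R\rightrightarrows\spec K$ induced by the two projections $\spec(K\otimes_{k}K)=\spec K\times_{\spec k}\spec K\arr\spec K$.

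For (1)$\,\Rightarrow\,$(2): if $\cG_\xi$ is algebraic, its diagonal is representable by algebraic spaces, and of finite type because $\cG_\xi$ is locally of finite presentation. Then $R$, being the pullback of this diagonal along $(\xi,\xi)\colon\spec(K\otimes_{k}K)\arr\cG_\xi\times_{\spec k}\cG_\xi$, is an algebraic space of finite type over $\spec(K\otimes_{k}K)$.

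For (2)$\,\Rightarrow\,$(1): assume $R$ is an algebraic space of finite type. First I would check that the diagonal of $\cG_\xi$ is representable by algebraic spaces of finite type. For a scheme $T$ and objects $a,b\in\cG_\xi(T)$, one must see that $\underisom_T(a,b)$ is an algebraic space of finite type over $T$; by fppf descent for algebraic spaces it is enough to do this after an fppf base change $T'\arr T$, and by the definition of the fppf image one may take $T'$ so that $a|_{T'}\simeq f^*\xi$ and $b|_{T'}\simeq g^*\xi$ for morphisms $f,g\colon T'\arr\spec K$; then $\underisom_{T'}(a|_{T'},b|_{T'})\simeq(f,g)^*R$. (This diagonal is also separated, since each such $\underisom$ is a pseudo-torsor under an automorphism group scheme, which under our conventions is a separated scheme.) In particular $\xi\colon\spec K\arr\cG_\xi$ is representable by algebraic spaces. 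It is, moreover, flat and locally of finite presentation: given any $T\arr\cG_\xi$, fppf-locally on $T$ it factors through $\xi$ via some $h\colon T'\arr\spec K$, and then $\spec K\times_{\cG_\xi}T'$ is the base change along $h$ of one of the projections $R\arr\spec K$; as a morphism to the spectrum of a field this projection is automatically flat, and it is of finite type because $R$ is of finite type over the finite $k$-algebra $K\otimes_{k}K$, hence locally of finite presentation; flatness and local finite presentation then descend from $T'$ to $T$. Finally $\xi$ is surjective, being an epimorphism of fppf stacks. By the standard criterion producing an algebraic stack from a flat, locally finitely presented, surjective chart (see \cite[Tag~06DC]{stacks-project}), $\cG_\xi$ is algebraic, with separated diagonal of finite type by the above.

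The delicate point will be the flatness of the chart $\spec K\arr\cG_\xi$ in the last step: one should resist trying to deduce it from flatness of $R$ over $\spec(K\otimes_{k}K)$, which genuinely fails as soon as $K/k$ is inseparable, and instead use that the projections of $R$ land in $\spec K$, the spectrum of a field, over which everything is flat. With that observation the remainder is routine manipulation with fppf descent and the definition of the residual gerbe.
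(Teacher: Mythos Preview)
Your proof is correct and follows essentially the same strategy as the paper: recognise $\cG_\xi$ as the quotient of the fppf groupoid $R\rightrightarrows\spec K$ and invoke Artin's criterion (the paper cites \cite[Corollaire~10.6]{laumon-moret-bailly}, you cite the equivalent \cite[Tag~06DC]{stacks-project}). The paper's write-up is much terser---it asserts without comment that $R\rightrightarrows\spec K$ is fppf and that $\spec K\to\cG_\xi$ is an fppf cover---whereas you spell out the reduction to finite $K/k$, the representability of the diagonal, and in particular the point that flatness of the chart comes from the target being a field rather than from flatness over $K\otimes_kK$; this extra care is not a different argument, just a more honest one.
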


\begin{proof}
(1)${}\implies{}$(2). The functor in question is the fibered product $\spec K\times_{\cM}\spec K$; since $\cG_{\xi}$ is a full subcategory of $\cM$ and $\spec K \arr \cM$ factors through $\cG_{\xi}$, we have $\spec K\times_{\cM}\spec K = \spec K\times_{\cG_{\xi}}\spec K$, and the result follows.

(2)${}\implies{}$(1). Set  $S \eqdef \spec K$ and $R \eqdef \underisom_{K\otimes_{k}K}(\pr_{1}^{*}\xi, \pr_{2}^{*}\xi)$. We obtain an fppf groupoid $R \double S$; since $S \arr \cG_{\xi}$ is an fppf cover, we have an equivalence of $\cG_{\xi}$ with the quotient stack $[R \double S]$. Then it follows from Artin's theorem \cite[Corollaire~10.6]{laumon-moret-bailly} that $\cG_{\xi}$ is an algebraic stack.
\end{proof}

If $\xi$ is algebraic, then the functor $\underaut_{K}\xi\colon \aff[K]\op \arr \cat{Grp}$ is a group scheme of finite type, as it is the restriction of $\underisom_{K\otimes_{k}K}(\pr_{1}^{*}\xi, \pr_{2}^{*}\xi)$ along the diagonal $\spec K \subseteq \spec K\otimes_{k}K$. We do not know whether the converse holds in general; but it does if $K/k$ is separable.

\begin{proposition}\label{prop:aut-algebraic}
Assume that the extension $K/k$ is separable, and that $\underaut_{K}\xi$ is a group scheme of finite type. Then $\xi$ is algebraic. 
\end{proposition}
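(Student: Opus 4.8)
The plan is to use Proposition~\ref{prop:algebraic}: it is enough to prove that the functor $F \eqdef \underisom_{K\otimes_{k}K}(\pr_{1}^{*}\xi,\pr_{2}^{*}\xi)$ is an algebraic space of finite type over $K\otimes_{k}K$. The hypothesis controls only the restriction of $F$ along the diagonal $\spec K \into \spec(K\otimes_{k}K)$, which is the finite-type group scheme $\underaut_{K}\xi$; so the point is to exploit separability in order to understand $F$ over the complement of the diagonal.

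The first step is to reduce to the case where $K/k$ is \emph{finite} separable. Since $\cM$ is locally of finite presentation, $\xi$ is pulled back from an object $\xi_{0}$ over some finite subextension $k\subseteq K_{0}\subseteq K$, which is still separable; by construction $\cG_{\xi}=\cG_{\xi_{0}}$, so it suffices to treat $\xi_{0}$. That $\underaut_{K_{0}}\xi_{0}$ is again of finite type follows by writing $K$ as the filtered union of the finite subextensions $K_{i}$ containing $K_{0}$ --- the transition maps being finitely presented and faithfully flat --- observing that $\underaut_{K_{i}}(\xi_{0})_{K_{i}}$ is the base change of $\underaut_{K_{0}}\xi_{0}$ and that $\underaut_{K}\xi$ is their filtered colimit, and applying the lemma on filtered inductive systems of finitely presented algebras proved above to descend finite presentation from $\underaut_{K}\xi$ to $\underaut_{K_{0}}\xi_{0}$. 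After replacing $\xi$ by $\xi_{0}$ we may thus assume that $K/k$ is finite and separable.

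In that case $K\otimes_{k}K$ is an \'etale $K$-algebra, hence a finite product $\prod_{j}L_{j}$ of finite separable field extensions of $K$, and $\spec(K\otimes_{k}K)=\coprod_{j}\spec L_{j}$. The multiplication $K\otimes_{k}K\arr K$, being a surjection onto a field, identifies $\spec K$ via the diagonal with one of these components, say $L_{0}=K$, and on that component both projections $\pr_{1},\pr_{2}$ restrict to the identity. Hence it suffices to show that for each $j$ the restriction $F_{j}$ of $F$ to $\spec L_{j}$, namely $\underisom_{L_{j}}\bigl((\pr_{1}^{*}\xi)_{L_{j}},(\pr_{2}^{*}\xi)_{L_{j}}\bigr)$, is an algebraic space of finite type over $L_{j}$. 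For $j=0$ this is $\underaut_{K}\xi$, which is a group scheme of finite type by hypothesis. For arbitrary $j$, each $F_{j}$ is canonically a pseudo-torsor under the finite-type group scheme $\underaut_{L_{j}}\xi_{L_{j}}=\underaut_{K}\xi\times_{\spec K}\spec L_{j}$; if $F_{j}$ is the empty sheaf it is the empty algebraic space, and otherwise, since $F_{j}$ is locally of finite presentation, it has a section over a nonzero finite-type $L_{j}$-algebra, which --- $L_{j}$ being a field --- is automatically an fppf cover of $\spec L_{j}$, so $F_{j}$ is an fppf torsor under $\underaut_{L_{j}}\xi_{L_{j}}$ and therefore an algebraic space of finite type. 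Thus $F$ is an algebraic space of finite type over $K\otimes_{k}K$, and Proposition~\ref{prop:algebraic} yields that $\xi$ is algebraic.

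I expect the real content to be concentrated in two places. The first is the reduction to a finite subextension: it has to be carried out within the finitely presented framework (via the filtered-colimit lemma, i.e. ultimately Artin's theorem) rather than by fpqc descent along the possibly infinite extension $K/k$. The second, and genuinely essential, point is the structural fact that separability makes $K\otimes_{k}K$ \'etale over $K$: this is what lets the diagonal split off as a direct factor governed by $\underaut_{K}\xi$, with the remaining factors automatically carrying torsors. For inseparable $K/k$ the algebra $K\otimes_{k}K$ has nilpotents, the diagonal is no longer a direct factor, and there is no reason for $F$ to be of finite type even when $\underaut_{K}\xi$ is --- which is exactly why the separability hypothesis appears, and why one does not expect the converse to hold without it.
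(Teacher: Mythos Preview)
Your argument is correct and follows the same route as the paper: reduce to $K/k$ finite, decompose $\spec(K\otimes_{k}K)$ into spectra of fields using separability, and observe that over each factor the $\underisom$ sheaf is either empty or a torsor under a base change of $\underaut_{K}\xi$. The only cosmetic difference is that the paper treats all components uniformly via the torsor dichotomy without singling out the diagonal factor, and it leaves the reduction to finite $K/k$ as a single sentence (relying on the preceding lemma) rather than spelling out the filtered-colimit descent.
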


\begin{proof}
We can assume that $K$ is finite over $k$. Set $G \eqdef \underaut_{K}\xi$ and Â $R \eqdef \underisom_{K\otimes_{k}K}(\pr_{1}^{*}\xi, \pr_{2}^{*}\xi)$; there is a natural right action of $G$ on $R$, by right composition. Assume that $G$ is representable, and let us show that $R$ is representable.

We have $K\otimes_{k}K = L_{1}\times \dots \times L_{r}$, where the each of the $L_{i}$ is a finite separable extension of $k$. Let us show that for each $i$ the restriction $R_{L_{i}}$ of $R$ to $\spec L_{i}$ is representable by a scheme; this will prove the result. For each $i$ we have two mutually exclusive cases.

\begin{enumeratea}

\item There exists an extension $L_{i}'$ of $L_{i}$ such that the pullbacks of $\pr_{1}^{*}\xi$ and $\pr_{2}^{*}\xi$ to $L_{i}'$ are isomorphic.

\item For any extension $L'_{i}$ of $L_{i}$,  the pullbacks of $\pr_{1}^{*}\xi$ and $\pr_{2}^{*}\xi$ to $L_{i}'$ are not isomorphic.
\end{enumeratea}

In the first case, the restriction $R_{L_{i}}$ is a $G$-torsor, and in the second it is empty. In both cases it is representable.
\end{proof}

\begin{proposition}\label{prop:fppf-gerbe}
Assume that $\xi$ is algebraic. Then $\cG_{\xi}$ is an fppf gerbe over $\spec k(\xi)$, where $k(\xi)$ is an intermediate extension $k \subseteq k(\xi) \subseteq K$, with $k(\xi)$ finite over $k$. Furthermore, there is a cartesian diagram
   \[
   \begin{tikzcd}
   \cB_{K}\underaut_{K}\xi \rar\dar & \cG_{\xi} \dar\\
   \spec K \rar & \spec k(\xi)\,.
   \end{tikzcd}
   \]
\end{proposition}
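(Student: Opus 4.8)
The plan is to first reduce to the case where $K$ is finite over $k$, using Proposition~\ref{prop:same-gerbe}: since $\cG_\xi = \cG_{\xi'}$ for a finite subextension $k \subseteq L \subseteq K$ over which $\xi$ descends, and since the statement is about $\cG_\xi$ together with the base $\spec k(\xi)$, it suffices to treat the finite case (one then sets $\cG_\xi$ equal to this gerbe and the cartesian square is obtained from the finite one by base change along $\spec K \arr \spec L$, using Lemma~\ref{lem:same-gerbe-finite} to identify the gerbes). So assume $K/k$ finite.

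Next, I would identify $\cG_\xi$ as a quotient stack. By Proposition~\ref{prop:algebraic} the object $R \eqdef \underisom_{K\otimes_k K}(\pr_1^*\xi,\pr_2^*\xi)$ is an algebraic space of finite type, and as in the proof of \refpart{prop:algebraic}{2}$\implies$\refpart{prop:algebraic}{1} we have an fppf groupoid $R \double S$ with $S = \spec K$, and an equivalence $\cG_\xi \simeq [R \double S]$, with $S \arr \cG_\xi$ an fppf cover. To extract the coarse base, I would let $\cG_\xi \arr X$ be the fppf sheafification (equivalently, form the coarse space / sheaf quotient of $R \double S$); since $R \double S$ is a groupoid in finite-type algebraic spaces and $R$ is of finite type over $K$ via each projection, the quotient sheaf $X$ is represented by the spectrum of a ring $k(\xi)$, and $\spec K \arr X$ is faithfully flat of finite presentation, hence $k(\xi) \arr K$ is a finite (flat) ring extension; because $\spec K \arr \spec k$ factors through it, $k(\xi)$ is a field, finite over $k$, lying between $k$ and $K$. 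That $\cG_\xi \arr \spec k(\xi)$ is an fppf gerbe is then essentially formal: it admits an fppf section-after-base-change (namely $\spec K$), and it is an fppf stack all of whose objects are fppf-locally isomorphic by construction of $R$ as the full isom sheaf; one checks the two defining conditions of a gerbe (local nonemptiness and local connectedness) directly from the description $[R \double S]$.

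Finally, for the cartesian square: base-changing $\cG_\xi \arr \spec k(\xi)$ along $\xi\colon \spec K \arr \spec k(\xi)$ gives $\spec K \times_{\spec k(\xi)} \cG_\xi$, which carries the tautological object $\xi_K$ (the pullback of $\xi$), and hence receives a morphism from $\cB_K \underaut_K\xi$ classifying $\xi$-forms; since over $K$ every object of $\cG_\xi$ becomes $\xi$ by construction of $R$, this morphism is an equivalence $\cB_K\underaut_K\xi \larrowsim \spec K\times_{\spec k(\xi)}\cG_\xi$, which is exactly the asserted cartesian diagram (note $\underaut_K\xi$ is a finite-type group scheme over $K$ by the remark preceding Proposition~\ref{prop:aut-algebraic}). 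I expect the main obstacle to be the representability of the quotient sheaf $X = \spec k(\xi)$ and the verification that $\spec K \arr X$ is finite flat — this is where one genuinely uses that $K/k$ is finite and that $R$ is finite type over each factor of $K\otimes_k K$; the gerbe axioms and the cartesian-square identification are comparatively routine once the base is in hand.
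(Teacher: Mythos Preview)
Your plan is broadly correct and reaches the same conclusion, but it diverges from the paper's argument precisely at the point you flag as the main obstacle, and the paper's route around that obstacle is worth knowing.

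You propose to form the fppf sheafification $X$ of $\cG_\xi$ directly as the sheaf quotient of the groupoid $R \double S$, and then argue that $X$ is represented by the spectrum of a field. The justification you offer --- that $R$ is of finite type over each copy of $K$ --- is not by itself enough to guarantee that the sheaf quotient is an algebraic space, and you would still need to control the image of $R$ in $\spec(K\otimes_k K)$ as a flat equivalence relation. This can be done, but it is fiddly in the non-separable case.

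The paper sidesteps this entirely with a one-line observation: since $\spec K \arr \cG_\xi$ is an fppf cover and every morphism to $\spec K$ is flat, \emph{every} morphism to $\cG_\xi$ is flat. In particular the inertia $\rI_{\cG_\xi} \arr \cG_\xi$ is flat, and then the general structure theorem \cite[\href{https://stacks.math.columbia.edu/tag/06QJ}{06QJ}]{stacks-project} gives that $\cG_\xi$ is an fppf gerbe over an algebraic space $Z$. From there, one checks that the diagonal of $Z$ is of finite type, so $Z$ contains a dense open scheme; since $\spec K \arr Z$ is flat and surjective from a point, $Z$ must be the spectrum of a field $k(\xi)$. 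This is both shorter and avoids the representability issue you anticipated.

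Your treatment of the cartesian square is essentially the same as the paper's: once $\cG_\xi \arr \spec k(\xi)$ is known to be a gerbe, its pullback to $\spec K$ is a gerbe with the tautological section $\xi$, hence equivalent to $\cB_K\underaut_K\xi$.
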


\begin{proof}
We can assume that $K$ is finite over $k$. Since $\spec K \arr \cG_{\xi}$ is flat and surjective, and every morphism to $\spec K$ is flat, it follows that every morphism to $\cG_{\xi}$ is flat. In particular, the inertia of $\cG_{\xi}$ is flat over $\cG_{\xi}$; by  \cite[\href{https://stacks.math.columbia.edu/tag/06QJ}{Proposition 06QJ}]{stacks-project} it follows that $\cG_{\xi}$ is an fppf gerbe over an algebraic space $Z$ over $\spec k$ (in the more general sense of \cite{stacks-project}). Under the present conditions it is immediate to check that the diagonal of $Z$ is of finite type; hence $Z$ has dense open subset that is a scheme  \cite[\href{https://stacks.math.columbia.edu/tag/06NH}{Proposition 06NH}]{stacks-project}. Since $\spec K \arr Z$ is flat and surjective, it follows immediately that $Z$ has to be the spectrum of a field, which proves the first statement.

For the second, the pullback $\spec K\times_{\spec k(\xi)}\cG_{\xi}$ is an fppf gerbe over $\spec K$. Since $\xi\colon \spec K \arr \cM$ factors, by definition, through $\cG_{\xi}$, we obtain a section $\spec K \arr \spec K\times_{\spec k(\xi)}\cG_{\xi}$; the automorphism group scheme of the corresponding object is $\underaut_{K}\xi$. This concludes the proof.
\end{proof}

\begin{definition}
The field $k(\xi)$ above is called the \emph{field of moduli} of $\xi$.
\end{definition}

The field of moduli has the following interpretation. Denote by $R$ the functor $\underisom_{K\otimes_{k}K}(\pr_{1}^{*}\xi, \pr_{2}^{*}\xi)$, which is an algebraic space of finite type over $K\otimes_{k}K$, by Proposition~\ref{prop:algebraic}.

\begin{proposition}\label{prop:description-field-moduli}
The field of moduli $k(\xi)$ is the equalizer of the two arrows $\pr_{1}^{*}$ and $\pr_{2}^{*}\colon K \arr \cO(R)$.
\end{proposition}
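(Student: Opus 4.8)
The plan is to identify both the field of moduli $k(\xi)$ and the equalizer of $\pr_{1}^{*},\pr_{2}^{*}\colon K\arr\cO(R)$ with the ring of global functions $\Gamma(\cG_{\xi},\cO_{\cG_{\xi}})$ on the residual gerbe. I would first prove this under the extra assumption that $K/k$ is finite, and then deduce the general algebraic case by a limit argument.

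So assume $K/k$ is finite. The proof of Proposition~\ref{prop:algebraic} already exhibits $\cG_{\xi}$ as the quotient stack $[R\double S]$ of the fppf groupoid with $S=\spec K$, with $R=\underisom_{K\otimes_{k}K}(\pr_{1}^{*}\xi,\pr_{2}^{*}\xi)$, and with source and target the two morphisms $R\arr\spec(K\otimes_{k}K)\double\spec K$ induced by $\pr_{1}$ and $\pr_{2}$; moreover $S\arr\cG_{\xi}$ is an fppf cover with $S\times_{\cG_{\xi}}S=R$. I would then apply fppf descent to the quasi-coherent sheaf $\cO_{\cG_{\xi}}$ along this cover, which gives $\Gamma(\cG_{\xi},\cO_{\cG_{\xi}})=\operatorname{eq}\bigl(\cO(S)\double\cO(R)\bigr)$; since $\cO(S)=K$ and, by the very construction of the groupoid, the two restriction maps are $\pr_{1}^{*}$ and $\pr_{2}^{*}$, this identifies the equalizer in the statement with $\Gamma(\cG_{\xi},\cO_{\cG_{\xi}})$.

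Next I would show $\Gamma(\cG_{\xi},\cO_{\cG_{\xi}})=k(\xi)$, still in the finite case. By Proposition~\ref{prop:fppf-gerbe} there is a cartesian square identifying $\cG_{\xi}\times_{\spec k(\xi)}\spec K$ with $\cB_{K}\underaut_{K}\xi$; the latter is $[\spec K/\underaut_{K}\xi]$ for the trivial action, so the same descent computation as above (now with both groupoid maps equal to the structure morphism to $\spec K$) yields $\Gamma(\cB_{K}\underaut_{K}\xi,\cO)=K$. Flat base change for global sections of quasi-coherent sheaves along the faithfully flat morphism $\spec K\arr\spec k(\xi)$ then gives $\Gamma(\cG_{\xi},\cO_{\cG_{\xi}})\otimes_{k(\xi)}K\cong K$, compatibly with the canonical map $k(\xi)\arr\Gamma(\cG_{\xi},\cO_{\cG_{\xi}})$ coming from the structure morphism $\cG_{\xi}\arr\spec k(\xi)$; since that map base-changes to the identity of $K$, faithful flatness forces it to be an isomorphism. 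Combined with the previous paragraph, this settles the case $K/k$ finite.

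Finally, for a general algebraic extension $K/k$ I would argue as follows. Let $F$ be the equalizer of $\pr_{1}^{*},\pr_{2}^{*}\colon K\arr\cO(R)$; since $K/k$ is algebraic, $F$ is a subfield of $K$ containing $k$. Pick any finite $k\subseteq E\subseteq K$ with $\xi=\xi'_{K}$ for some $\xi'\colon\spec E\arr\cM$; then $R\cong R'\times_{\spec(E\otimes_{k}E)}\spec(K\otimes_{k}K)$, where $R'=\underisom_{E\otimes_{k}E}(\pr_{1}^{*}\xi',\pr_{2}^{*}\xi')$, and flatness of $K\otimes_{k}K$ over $E\otimes_{k}E$ gives $\cO(R)=\cO(R')\otimes_{E\otimes_{k}E}(K\otimes_{k}K)$, hence $\cO(R')\hookrightarrow\cO(R)$; so $F\cap E$ is the equalizer of $\pr_{1}^{*},\pr_{2}^{*}\colon E\arr\cO(R')$, which equals $k(\xi')=k(\xi)$ by the finite case and by $\cG_{\xi'}=\cG_{\xi}$. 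Running this over all finite $E''$ with $E\subseteq E''\subseteq K$ gives $F\cap E''=k(\xi)$ for each of them, and since $K$ is the union of these $E''$ we conclude $F=k(\xi)$. I expect the main obstacle to be the third paragraph: identifying $\Gamma(\cG_{\xi},\cO_{\cG_{\xi}})$ with $k(\xi)$ needs careful descent and flat base-change bookkeeping for quasi-coherent cohomology on algebraic stacks, and the final reduction has to account honestly for the fact that the algebraic space $R$ genuinely changes when $K$ is replaced by a finite subextension.
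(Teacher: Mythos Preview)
Your proof is correct and follows the same core idea as the paper: identify both $k(\xi)$ and the equalizer with $\cO(\cG_{\xi})$, using that $\spec K\arr\cG_{\xi}$ is a flat cover with $\spec K\times_{\cG_{\xi}}\spec K=R$. The paper's argument is considerably shorter, however, for two reasons: first, it invokes directly the general fact that an fppf gerbe $\cG\arr\spec k(\xi)$ has $\cO(\cG)=k(\xi)$, rather than reproving this via the cartesian square of Proposition~\ref{prop:fppf-gerbe} and flat base change; second, and more to the point, it observes that $\spec K\arr\cG_{\xi}$ is an \emph{fpqc} cover even when $K/k$ is merely algebraic, so descent for $\cO$ applies immediately and your entire limit argument over finite subextensions $E\subseteq K$ is unnecessary.
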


\begin{proof}
Since $\cG_{\xi}$ is a gerbe over $\spec k(\xi)$, we have $k(\xi) = \cO(\cG_{\xi})$. The morphism $\spec K \arr \cG_{\xi}$ is an fpqc cover, and $R = \spec K \times_{\cG_{\xi}} \spec K$, hence $\cO(\cG_{\xi})$ is the equalizer the two arrows in question.
\end{proof}

In the ``classical'' case, where $K/k$ is a (not necessarily finite) Galois extension we obtain the following interpretation of the field of moduli.

\begin{proposition}
Suppose that $K/k$ is a Galois extension with Galois group $G$. For each $\gamma \in G$ call $\gamma^{*}\xi$ the pullback of $\xi$ along $\gamma\colon \spec K \arr \spec K$; call $\overline{K}$ the algebraic closure of $K$. Let $H \subseteq G$ the subgroup consisting of elements $\gamma \in G$ such that $(\gamma^{*}\xi)_{\overline{K}}$ is isomorphic to $\xi_{\overline{K}}$. Then $k(\xi)$ is the field of invariants $K^{H}$.
\end{proposition}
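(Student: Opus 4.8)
The plan is to use the previously established description of the field of moduli via the equalizer of the two arrows $\pr_1^*, \pr_2^*\colon K \to \cO(R)$, where $R = \underisom_{K\otimes_k K}(\pr_1^*\xi, \pr_2^*\xi)$ (Proposition~\ref{prop:description-field-moduli}), and then compute this equalizer directly using the Galois-theoretic structure of $K\otimes_k K$. First I would reduce to the case where $K$ is finite over $k$: by Proposition~\ref{prop:same-gerbe} the residual gerbe, and hence the field of moduli, is unchanged by replacing $K$ with a finite subextension $L$ over which $\xi$ is already defined, and one checks that the subgroup $H$ and the field $K^H$ behave compatibly under this reduction (the Galois correspondence for the tower $k \subseteq L \subseteq K$ matches up $H \cap \mathrm{Gal}(K/L)$ with the relevant automorphisms). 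Actually, since $K/k$ Galois with group $G$ need not be finite, the cleanest route is to pass to a finite Galois subextension $L/k$ such that $\xi$ descends to $L$, observe $G \twoheadrightarrow \mathrm{Gal}(L/k)$, and verify that $H$ is the preimage of the analogous subgroup $H_L \subseteq \mathrm{Gal}(L/k)$, so that $K^H = L^{H_L}$; this lets me assume $K/k$ is finite Galois from now on.

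With $K/k$ finite Galois of group $G$, I would use the standard decomposition $K \otimes_k K \xrightarrow{\sim} \prod_{\gamma \in G} K$, where the factor indexed by $\gamma$ is the quotient by the ideal on which $\pr_1^*(a) = \gamma^*\pr_2^*(a)$, i.e.\ the map $a \otimes b \mapsto a\,\gamma(b)$. Under this identification $\pr_1$ becomes the diagonal $K \to \prod_\gamma K$ and $\pr_2$ becomes $a \mapsto (\gamma(a))_{\gamma \in G}$. Correspondingly $R = \underisom_{K\otimes_k K}(\pr_1^*\xi, \pr_2^*\xi)$ decomposes as $\coprod_{\gamma \in G} R_\gamma$, where $R_\gamma = \underisom_K(\xi, \gamma^*\xi)$ over $\spec K$. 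Here is the key point: $R_\gamma$ is nonempty, equivalently $\cO(R_\gamma) \neq 0$, if and only if $\xi$ and $\gamma^*\xi$ become isomorphic over some extension of $K$ — and since $\underisom_K(\xi,\gamma^*\xi)$ is an algebraic space of finite type over $K$ (this is where algebraicity of $\xi$ enters), nonemptiness over $\overline{K}$ is equivalent to nonemptiness over $K$-scheme-points, hence to $\gamma \in H$. For $\gamma \notin H$ we get $R_\gamma = \emptyset$, so that factor contributes nothing to $\cO(R) = \prod_{\gamma \in H} \cO(R_\gamma)$.

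Now I compute the equalizer. An element $a \in K$ lies in it iff $\pr_1^*(a) = \pr_2^*(a)$ in $\cO(R) = \prod_{\gamma \in H}\cO(R_\gamma)$; projecting to the $\gamma$-component and pulling back along the (faithfully flat, since $R_\gamma$ is a $G$-torsor over $\spec K$ when nonempty) structure map $\spec \cO(R_\gamma) \to \spec K$, this says exactly $a = \gamma(a)$ for each $\gamma \in H$. Hence the equalizer is $\{a \in K : \gamma(a) = a \ \forall \gamma \in H\} = K^H$, which by Proposition~\ref{prop:description-field-moduli} is $k(\xi)$. The main obstacle — and the only genuinely non-formal step — is justifying that $R_\gamma \neq \emptyset$ as a scheme exactly when $\gamma \in H$: the definition of $H$ uses isomorphism after base change to $\overline{K}$, whereas $R_\gamma$ being nonempty as a $K$-algebraic space a priori only guarantees a point valued in some field extension; but a finite-type algebraic space over $K$ with a point over $\overline{K}$ has a residue field at that image point which is finite over $K$, hence $\cO(R_\gamma) \neq 0$, which is all that is needed for the equalizer computation (we never need a genuine $K$-point of $R_\gamma$, only nonvanishing of its ring of global functions). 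I should also double-check the bookkeeping in the reduction to the finite Galois case, making sure $H$ really is a subgroup (closed under composition and inverses, using that $\gamma^*$ and $\delta^*$ compose and that isomorphism over $\overline K$ is an equivalence relation) and open in $G$, but that is routine.
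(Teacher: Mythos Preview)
Your proposal is correct and follows essentially the same approach as the paper: reduce to the finite Galois case, decompose $\spec(K\otimes_k K)$ as $\coprod_{\gamma\in G}\spec K$, observe that the image of $R$ over this decomposition is exactly $\coprod_{\gamma\in H}\spec K$, and then read off the equalizer from Proposition~\ref{prop:description-field-moduli}. Your write-up is more explicit about the identification of $\pr_1^*$, $\pr_2^*$ and about why nonemptiness of $R_\gamma$ is equivalent to $\gamma\in H$, but the underlying argument is the same.
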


\begin{proof}
First, assume that $K$ is finite over $k$. In this case $\spec(K \otimes_{k}K)$ is a disjoint union $\coprod_{g \in G} \spec K$. The image of the natural morphism $R \arr \spec(K \otimes_{k}K)$ is $\coprod_{g \in H} \spec K \subseteq \coprod_{g \in G} \spec K$; hence $\pr_{1}^{*}K$, $\pr_{2}^{*}\colon K \arr \cO(R)$ factor through the pullback $\prod_{g \in H} K \arr \cO(R)$, which is injective. The conclusion follows from Proposition~\ref{prop:description-field-moduli}.

If $K$ is not finite over $k$, choose an intermediate extension $k \subseteq K' \subseteq K$ such that $(X, \xi)$ descends to $K'$. The result for $K'$ is easily seen to imply that for $K$.
\end{proof}

\begin{definition}
The object $\xi$ is \emph{tame} if it is algebraic, and $\underaut_{K}\xi$ is finite and linearly reductive.
\end{definition}

Equivalently, the object $\xi$ is tame if $\cG_{\xi}$ is a tame stack.

\subsection{Residual gerbes and moduli spaces}

In case $\cM$ is an algebraic stack with finite inertia, there is another interesting interpretation of the field of moduli of a tame object.

Assume that $\cM$ is an algebraic stack with finite inertia, with moduli space $\cM \arr M$, and let $m \in M$ be a point. By definition of a moduli space there exists an object $\xi$ over the algebraic closure $\overline{k(m)}$; we say that the point $m$ is \emph{tame} if $\underaut_{\overline{k(m)}}\xi$ is linearly reductive.

\begin{definition}\label{def:residual-gerbe-point}
Assume that $m \in M$ is a tame point. The \emph{residual gerbe of $m$} is defined to be
   \[
   \cG_{m} \eqdef \bigl(\spec k(m)\times_{M}\cM\bigr)_{\mathrm red}\,.
   \]
\end{definition}

\begin{proposition}
The residual gerbe $\cG_{m}$ is a finite tame gerbe over $k(m)$.
\end{proposition}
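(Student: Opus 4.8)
The plan is to analyze $\cG_m$ by base change to an algebraic closure and then descend. First I would pick a point $m \in M$ that is tame and an object $\xi$ over $\overline{k(m)}$ as in the definition; the residual gerbe $\cG_m = (\spec k(m) \times_M \cM)_{\mathrm{red}}$ receives a map from $\spec \overline{k(m)}$ via $\xi$, since $\xi$ maps to the point $m$ in $M$. The first key step is to identify $\cG_m$, after the faithfully flat base change $\spec \overline{k(m)} \to \spec k(m)$, with the classifying stack $\cB_{\overline{k(m)}} \underaut_{\overline{k(m)}}\xi$. This is essentially because, over an algebraically closed field, a reduced algebraic stack with a single point and finite inertia whose automorphism group of a point is $A \eqdef \underaut_{\overline{k(m)}}\xi$ is $\cB A$; one must check that $(\spec \overline{k(m)} \times_M \cM)_{\mathrm{red}}$ has exactly one point (which follows because $\spec \overline{k(m)} \to M$ has image the point $m$, and $\cM \to M$ induces a bijection on geometric points up to the gerbe structure) and that taking the reduced substack commutes with this identification.

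Once $\cG_m \times_{\spec k(m)} \spec \overline{k(m)} \simeq \cB_{\overline{k(m)}} A$, I would argue as follows. The group scheme $A = \underaut_{\overline{k(m)}}\xi$ is finite (because $\cM$ has finite inertia, the inertia of $\cG_m$ is finite, so automorphism groups of objects are finite group schemes) and linearly reductive by the hypothesis that $m$ is a tame point. Hence $\cB_{\overline{k(m)}} A$ is a finite tame gerbe over $\overline{k(m)}$ in the sense of \cite{dan-olsson-vistoli1}. Tameness, finiteness, and the gerbe property are all fppf-local on the base $\spec k(m)$: a morphism is finite if and only if it becomes finite after a faithfully flat base change, the property of being a gerbe is fppf-local, and linear reductivity of the inertia can be checked after faithfully flat base change by \cite[Theorem 3.2]{dan-olsson-vistoli1}. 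Therefore $\cG_m \to \spec k(m)$ is itself a finite tame gerbe. The only mild subtlety is that $\spec \overline{k(m)} \to \spec k(m)$ need not be of finite type when $k(m)$ is not perfect, but one can instead descend through a suitable finite extension over which an object of $\cM$ exists (which is possible since $\cM$ is locally of finite presentation and $\cM \to M$ is a moduli space), and this finite extension gives a genuinely fppf cover of $\spec k(m)$.

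The step I expect to be the main obstacle is the first one: verifying carefully that $(\spec \overline{k(m)} \times_M \cM)_{\mathrm{red}}$ is $\cB_{\overline{k(m)}} A$, i.e. that after base change to the algebraic closure, the fibre product with the reduction operation really does collapse to the classifying stack of the automorphism group scheme of a single object. This requires knowing that all geometric objects of $\cM$ lying over $m$ are isomorphic to $\xi$ over $\overline{k(m)}$ — which is exactly the defining property of the coarse (moduli) space $\cM \to M$ combined with the fact that $\overline{k(m)}$ is algebraically closed — and that the $\mathrm{red}$ operation does not interfere, since $\cB_{\overline{k(m)}}A$ is already reduced (as $A$ is linearly reductive, hence smooth in characteristic zero, but in positive characteristic one must instead note that $\cB A$ is reduced because it is a gerbe over the reduced scheme $\spec \overline{k(m)}$, using that being reduced for a stack with flat structure morphism can be checked on a presentation). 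Everything else — finiteness of the inertia, tameness — then follows formally from descent and the cited characterization of tame stacks.
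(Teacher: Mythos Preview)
Your approach differs from the paper's and contains a genuine gap at exactly the step you flagged as the main obstacle.

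You want to show that $(\cG_m)_{\overline{k(m)}} \simeq \cB_{\overline{k(m)}}A$. The argument you give for this is that $\cB_{\overline{k(m)}}A$ is reduced. But that is not the issue: what you need is that $(\cG_m)_{\overline{k(m)}}$ is reduced, i.e.\ that $\cG_m$ is \emph{geometrically} reduced over $k(m)$. Taking the reduced closed substack does not in general commute with base change along an inseparable field extension, so from $\cG_m = (\spec k(m)\times_M\cM)_{\mathrm{red}}$ you only get a chain of closed immersions
\[
\cB_{\overline{k(m)}}A \;=\; \bigl(\spec\overline{k(m)}\times_M\cM\bigr)_{\mathrm{red}} \;\subseteq\; (\cG_m)_{\overline{k(m)}} \;\subseteq\; \spec\overline{k(m)}\times_M\cM,
\]
and nothing you have said rules out that the middle term is a genuine nilpotent thickening of $\cB_{\overline{k(m)}}A$. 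In that case $(\cG_m)_{\overline{k(m)}}$ is not a gerbe over $\spec\overline{k(m)}$, and your descent argument collapses. The same problem persists if you replace $\overline{k(m)}$ by a finite extension, since the issue is inseparability, not finite presentation. (A posteriori $\cG_m$ \emph{is} geometrically reduced, because it is a gerbe; but that is what you are trying to prove.)

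The paper avoids this entirely by working directly over $k(m)$. After restricting to the open tame locus of $M$, it uses that formation of moduli spaces of tame stacks commutes with arbitrary base change to conclude that the moduli space of $\spec k(m)\times_M\cM$ is $\spec k(m)$; from this (using, implicitly, exactness of pushforward along the tame moduli map) the moduli space of the reduction $\cG_m$ is still $\spec k(m)$. Then \cite[Proposition~06RC]{stacks-project} --- a reduced locally Noetherian algebraic stack with a single point is a gerbe over a field --- gives that $\cG_m$ is a gerbe, necessarily over its moduli space $\spec k(m)$. Finiteness and tameness are then immediate from the hypotheses on $\cM$ and $m$. No base change to $\overline{k(m)}$ is needed.
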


\begin{proof}
By \cite[Proposition 3.6]{dan-olsson-vistoli1} the tame points of $M$ form an open subspace $M' \subseteq M$; we can replace $M$ with $M'$, and assume that $\cM$ is tame. Since formation of moduli spaces of tame stacks commutes with base change, we have that the moduli space of $\spec k(m)\times_{M}\cM$ is $\spec k(m)$; and from this, that the moduli space of $\cG_{m}$ is $\spec k(m)$. From \cite[Proposition~06RC]{stacks-project} it follows that $\cG_{m}$ is a gerbe over $k(m)$, as claimed.
\end{proof}

\begin{proposition}\label{prop:field-moduli-space}
Assume that $\cM$ is an algebraic stack with finite inertia locally of finite type over $k$, with moduli space $\cM \arr M$. Let $\xi\colon \spec K \arr \cM$ be a tame object, and call $m \in M$ the image of the composite $\spec K \xarr{\xi} \cM \arr M$. Then the residual gerbe of $\xi$ is $\cG_{m}$, and the field of moduli $k(\xi)$ is the residue field $k(m)$.
\end{proposition}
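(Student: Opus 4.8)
The plan is to show that the residual gerbe $\cG_{\xi}$ of $\xi$ coincides, as a substack of $\cM$, with the residual gerbe $\cG_{m}$ of the point $m$; the equality $k(\xi) = k(m)$ will then be formal, since the moduli space of a gerbe is canonically recovered from the gerbe. As in the earlier proofs I first reduce to the case where $K$ is finite over $k$: this is legitimate because $\cM$ is locally of finite presentation and, by Proposition~\ref{prop:same-gerbe}, neither $\cG_{\xi}$ nor the image point $m$ changes when one restricts $\xi$ to a finite subextension over which it is defined.

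First I would check that $\xi$ factors through $\cG_{m}$. The composite $\spec K \xarr{\xi} \cM \arr M$ has image the single point $m$, hence factors through the monomorphism $\spec k(m) \into M$; therefore $K$ is an extension of $k(m)$, the morphism $\xi$ factors through $\spec k(m) \times_{M} \cM$, and, $\spec K$ being reduced, through its reduction $\cG_{m}$. Since $\cG_{m} \arr \cM$ is a monomorphism --- it is the composite of the closed immersion $\cG_{m} \into \spec k(m)\times_{M}\cM$ with the base change of the monomorphism $\spec k(m) \into M$ --- it is a full fppf substack of $\cM$ through which $\xi$ factors, so the minimality of the residual gerbe gives a monomorphism $g\colon \cG_{\xi} \into \cG_{m}$.

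The heart of the argument is to show that $g$ is faithfully flat; a faithfully flat monomorphism of algebraic stacks being an isomorphism, this yields $\cG_{\xi} = \cG_{m}$. By construction $\cG_{\xi}$ is the quotient stack of the fppf groupoid $\underisom_{K\otimes_{k}K}(\pr_{1}^{*}\xi,\pr_{2}^{*}\xi) \double \spec K$, so $f\colon \spec K \arr \cG_{\xi}$ is an fppf cover; as flatness of $g$ can be tested after the fppf base change $f$ on its source, it is enough to check that $g\circ f = \xi\colon \spec K \arr \cG_{m}$ is an fppf cover. For this I use that $m$ is a tame point: the base change of $\underaut_{K}\xi$ to an algebraic closure $\Omega$ of $k(m)$ is the automorphism group of $\xi_{\Omega}$, because any two objects over $\Omega$ mapping to $m$ are isomorphic, hence it is finite and linearly reductive --- so $\cG_{m}$ is a gerbe over $\spec k(m)$ by the proposition preceding this one. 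After base change along $\spec K \arr \spec k(m)$ this gerbe acquires a section, namely $\xi$, so $\cG_{m}\times_{k(m)}\spec K \simeq \cB_{K}\underaut_{K}\xi$; under this identification $\xi$ becomes the composite of the canonical section $\spec K \arr \cB_{K}\underaut_{K}\xi$ with the projection $\cG_{m}\times_{k(m)}\spec K \arr \cG_{m}$, both of which are fppf covers, so $\xi$ is one as well. Establishing exactly this last point --- that $\xi\colon \spec K \arr \cG_{m}$ is an fppf cover, which rests on the structure theory of gerbes (local neutrality, and triviality of a gerbe admitting a section) --- is the step I expect to require the most care.

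Finally, $\cG_{\xi} = \cG_{m}$ is a gerbe over $\spec k(\xi)$ by Proposition~\ref{prop:fppf-gerbe} and over $\spec k(m)$ by construction; since the base of a gerbe is its fppf sheafification, there is a canonical isomorphism $\spec k(\xi) \simeq \spec k(m)$ compatible with the two structure maps, hence with the two inclusions into $K$. Therefore $k(\xi) = k(m)$ as subfields of $K$, and the residual gerbe of $\xi$ is $\cG_{m}$, as claimed.
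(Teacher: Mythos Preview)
Your proof is correct and follows essentially the same route as the paper: reduce to $K/k$ finite, factor $\xi$ through $\cG_{m}$ using that $\spec K$ is reduced, and then show that $\spec K \arr \cG_{m}$ is an fppf cover, whence $\cG_{\xi} = \cG_{m}$. The only difference is one of economy: where you explicitly neutralize $\cG_{m}$ after base change to $K$ to exhibit $\xi$ as a composite of fppf covers, the paper simply observes that since $\cG_{m}$ is a finite gerbe over $k(m)$, the morphism $\spec K \arr \cG_{m}$ is automatically flat and finite, hence fppf.
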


\begin{proof}
We may assume that $K$ is finite over $k$. The morphism $\spec K \arr \cM$ factors through $\spec k(m)\times_{M}\cM$; since $\spec K$ is reduced we get a factorization $\spec K \arr \cG_{m} \subseteq \cM$. Since $\cG_{m}$ is a finite gerbe, it follows that $\spec K \arr \cG_{m}$ is flat and finite, hence it is an fppf cover. The result follows from this.
\end{proof}

It is easy to give counterexamples to the statement of Proposition~\ref{prop:field-moduli-space} without the tameness hypothesis. The point is that the moduli space of $\spec k(m)\times_{M}\cM$ may be a non-trivial purely inseparable extension $k'$ of $k(m)$; and in this case the argument above shows that the field of moduli of $\xi$ is $k'$.

\subsection{The basic question}

Now assume that $\cM \arr \aff$ is an fppf locally finitely presented stack, $\xi\colon\spec \overline{k} \arr \cM$ an algebraic object defined over the algebraic closure of $k$, $k(\xi) \subseteq \overline{k}$ its field of moduli. Is $\xi$ defined over its field of moduli $k(\xi)$? This is equivalent to asking whether $\cG_{\xi}\bigl(k(\xi)\bigr) \neq \emptyset$. 

From now on we will consider objects $\xi$ defined over the algebraic closure $\overline{k}$ of $k$; from Proposition~\ref{prop:same-gerbe} it is clear that this is not a restriction.

\section{Application of nonabelian cohomology}\label{sec:nonabelian}

In the situation above, assume that $\xi \in \cM(\overline{k})$ is an algebraic object. 

As an immediate corollary of the fact that every affine gerbe over a finite field is neutral \cite[Theorem~8.1]{diproietto-tonini-zhang}, we get the following.

\begin{proposition}
Assume that $k$ is finite and $\underaut_{\overline{k}}\xi$ is affine. Then $\xi$ is defined over its field of moduli.
\end{proposition}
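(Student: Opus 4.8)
The statement is equivalent to $\cG_{\xi}\bigl(k(\xi)\bigr) \neq \emptyset$, as recalled in the ``basic question'' subsection, so the plan is to show that the residual gerbe $\cG_{\xi}$ is neutral. Since $\xi$ is algebraic, Proposition~\ref{prop:fppf-gerbe} applied with $K = \overline{k}$ tells us that $\cG_{\xi}$ is an fppf gerbe over $\spec k(\xi)$ with $k(\xi)$ finite over $k$; in particular, because $k$ is finite, $k(\xi)$ is itself a finite field. It therefore suffices to recognize $\cG_{\xi}$ as an \emph{affine} gerbe and invoke \cite[Theorem~8.1]{diproietto-tonini-zhang}, according to which every affine gerbe over a finite field is neutral, hence has a rational point.

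To see that $\cG_{\xi}$ is affine, I would use the cartesian square of Proposition~\ref{prop:fppf-gerbe}: the base change $\cG_{\xi} \times_{\spec k(\xi)} \spec \overline{k}$ is isomorphic to $\cB_{\overline{k}}\underaut_{\overline{k}}\xi$. Since $\xi$ is algebraic, $\underaut_{\overline{k}}\xi$ is a group scheme of finite type over $\overline{k}$, and it is affine by hypothesis; hence $\cG_{\xi}$ becomes, after the faithfully flat base change $\spec \overline{k} \arr \spec k(\xi)$, the classifying stack of an affine group scheme of finite type. As affineness of the band of a gerbe can be tested after fppf base change, $\cG_{\xi}$ is an affine gerbe over the finite field $k(\xi)$, so \cite[Theorem~8.1]{diproietto-tonini-zhang} yields a $k(\xi)$-point of $\cG_{\xi}$, which is exactly a descent of $\xi$ to its field of moduli.

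There is no serious obstacle here; the only points requiring a little care are matching the notion of ``affine gerbe'' used in \cite{diproietto-tonini-zhang} with our $\cG_{\xi}$, and keeping track that the relevant finite field is $k(\xi)$ rather than $k$ — these are handled, respectively, by the cartesian square above and by the finiteness of $k(\xi)/k$ furnished by Proposition~\ref{prop:fppf-gerbe}.
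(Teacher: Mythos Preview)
Your proof is correct and follows exactly the approach indicated in the paper: the paper presents the proposition as an immediate corollary of \cite[Theorem~8.1]{diproietto-tonini-zhang}, and you have simply filled in the verification that $\cG_{\xi}$ is an affine gerbe over the finite field $k(\xi)$ needed to invoke that result. There is nothing to add.
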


One can also apply standard results on the classification of gerbes, which usually go under the name of Grothendieck--Giraud nonabelian cohomology \cite{giraud}, to get conditions ensuring that $\xi$ is defined over its field of moduli. This works very cleanly when $\underaut_{\overline{k}}\xi$ is finite and reduced. 

\smallskip

Set $G \eqdef \underaut_{\overline{k}}\xi$, and assume for the rest of the section that $G$ is finite and reduced; according to our general conventions we think of $G$ as an ordinary group. Denote by $\aut G$ the group of automorphisms of $G$, and by $\out G$ its group of outer automorphisms, that is, the cokernel of the homomorphism $G \arr \aut G$ given by conjugation.

\begin{proposition}\label{prop:moduli1}
Assume that the following conditions are satisfied:

\begin{enumerate1}

\item the center of $G$ is trivial, and

\item the projection $\aut G \arr \out G$ is split.

\end{enumerate1}

Then $\xi$ is defined over its field of moduli.
\end{proposition}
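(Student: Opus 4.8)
The plan is to realize $\cG_\xi$ as a gerbe banded by $G$ over the field of moduli $k(\xi)$, and then use the classical Grothendieck--Giraud dictionary to produce a section. Write $k_0 \eqdef k(\xi)$. By Proposition~\ref{prop:fppf-gerbe}, $\cG_\xi$ is an fppf gerbe over $\spec k_0$, étale (since $G$ is reduced, hence smooth) and neutralized by $\overline{k}$ with $\underaut_{\overline k}\xi \simeq G$. The first step is to extract from $\cG_\xi$ its \emph{band} (lien) in the sense of Giraud: the $\out G$-torsor $\underout_{k_0}\cG_\xi$ over $\spec k_0$ whose fiber records the outer class of the local identifications of the inertia with $G$. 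Because $\out G$ is a finite (constant) group, this band is classified by a continuous homomorphism $\rho\colon \operatorname{Gal}(\overline k/k_0) \arr \out G$.

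The second step is to lift this band to an honest Galois action and so neutralize the gerbe. Here is where hypothesis (2) enters: since $\aut G \arr \out G$ is split, choose a section $s\colon \out G \arr \aut G$, and set $\widetilde\rho \eqdef s\circ\rho\colon \operatorname{Gal}(\overline k/k_0)\arr \aut G$. This $\widetilde\rho$ defines a $k_0$-form $G'$ of the constant group $G$ (an inner form of the band, i.e.\ a group scheme whose associated band is the given one). By the standard theory, a gerbe banded by a fixed lien $L$ admits a section if and only if a certain obstruction class in $\H^2(k_0, Z(L))$ vanishes, where $Z(L)$ is the center of the band — and this obstruction is exactly the class of the gerbe $\cG_\xi$ once one has chosen the group $G'$ representing the band. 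Hypothesis (1), that $Z(G)=1$, forces $Z(L)=1$, so $\H^2(k_0,Z(L))=0$ and the obstruction automatically vanishes. Hence $\cG_\xi$ is the neutral gerbe $\cB_{k_0}G'$, in particular $\cG_\xi(k_0)\neq\emptyset$, which by the discussion in \S\ref{sec:field-moduli} means precisely that $\xi$ is defined over its field of moduli.

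I expect the main obstacle to be a careful and correct setup of the band/lien formalism in the fppf-gerbe language used here, rather than the classical Galois-cohomological language: one must check that $\cG_\xi$ genuinely has the structure of an $\out G$-band classified by a homomorphism out of the Galois group (using that $G$, being finite and reduced, is étale so that the gerbe is an étale gerbe and descent along $\overline k$ is Galois descent), and that ``section of $\aut G\to\out G$'' plus ``trivial center'' together give exactly the two inputs — rigidification of the band to an inner form, and vanishing of the $\H^2$-obstruction — needed in Giraud's classification. Once the band $\rho$ and the splitting $s$ are in hand, producing $G'$ and invoking $\H^2(k_0,Z(L))=\H^2(k_0,1)=0$ is formal. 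A minor point to dispatch along the way: $k_0$ is a perfect-or-not field of characteristic zero in the running assumption of the introduction, but the argument only uses that $G$ is reduced and $\overline k/k_0$ provides a neutralizing cover, so no perfectness is needed; the cohomology $\H^2$ here is fppf (equivalently, since $G'$ is smooth, étale) cohomology of the finite group scheme $G'$, and triviality of the center is what makes it vanish.
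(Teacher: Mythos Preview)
Your proposal is correct and follows essentially the same route as the paper. The paper reduces the proposition to a lemma (Lemma~\ref{lem:moduli}) on neutrality of gerbes, whose proof is exactly what you outline: extract the band as a homomorphism $\Gamma \arr \out G$, use the splitting hypothesis to lift it to $\Gamma \arr \aut G$ and hence obtain a twisted form $\underline{G}$ of $G$, and then invoke Giraud's classification (the paper cites \cite[Th\'eor\`eme~3.3.3]{giraud}, which is the statement that gerbes with a given band form a torsor under $\H^{2}$ of the center) together with $Z(G)=1$ to conclude $\cG_{\xi} \simeq \cB_{k_{0}}\underline{G}$. The only cosmetic difference is that the paper works over the separable closure $k^{\rms}$ and first checks (Lemma~\ref{lem:closed-neutral}) that $\cG_{\xi}$ is already neutral there, whereas you work directly over $\overline{k}$; since $G$ is finite \'etale this is immaterial.
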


This should be compared with \cite[Corollary~4.3(b)]{debes-emsalem}.

In similar spirit we get the following, which is a generalization of \cite[Corollary~4.3(a)]{debes-emsalem}, with the same proof. 

\begin{proposition}\label{prop:moduli2}
Assume that the absolute Galois group of $k$ has cohomological dimension at most $1$. Then $\xi$ is defined over its field of moduli.
\end{proposition}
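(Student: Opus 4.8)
The plan is to follow the proof of \cite[Corollary~4.3(a)]{debes-emsalem}, phrased in the gerbe language already used for Proposition~\ref{prop:moduli1}. By Proposition~\ref{prop:fppf-gerbe}, $\cG_{\xi}$ is an fppf gerbe over $\spec k(\xi)$ which becomes $\cB_{\overline{k}}G$ after base change to $\overline{k}$; so $\xi$ is defined over its field of moduli precisely when the gerbe $\cG_{\xi}$ is neutral, i.e.\ $\cG_{\xi}\bigl(k(\xi)\bigr)\neq\emptyset$. First I would note that $k(\xi)/k$ is a finite separable extension (this uses that $G$ is \'etale), hence $\operatorname{cd}\bigl(k(\xi)\bigr)\le\operatorname{cd}(k)\le 1$; replacing $k$ by $k(\xi)$, it therefore suffices to prove that a gerbe $\cG$ over a field $k$ with $\operatorname{cd}(k)\le1$ whose band over $\overline{k}$ is $G$ is neutral.

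The gerbe $\cG$ is banded by a $k$-form $L$ of the finite group $G$, and $L$ corresponds to a continuous homomorphism $\rho\colon\operatorname{Gal}(k_{\mathrm s}/k)\arr\out G$, well defined up to conjugacy. Exactly as in Proposition~\ref{prop:moduli1}, the argument then has two steps, with the hypothesis $\operatorname{cd}(k)\le1$ replacing each of the two hypotheses used there. In the first step I would show that $L$ is representable by a finite \'etale $k$-group scheme $H$, i.e.\ that $\rho$ lifts to a continuous homomorphism $\operatorname{Gal}(k_{\mathrm s}/k)\arr\aut G$; the obstruction to such a lift along $1\arr G/Z(G)\arr\aut G\arr\out G\arr1$ is assembled from classes in $\H^{i}(k,A)$ with $i\ge2$ and $A$ a finite subquotient of $G$, and these vanish since $\operatorname{cd}(k)\le1$. (Here one uses that a profinite group of cohomological dimension $\le1$ is torsion free, so that the finite image of $\rho$ is harmless.)

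In the second step, having identified $L$ with the band of $H$, I would invoke the Grothendieck--Giraud classification of gerbes \cite{giraud}: gerbes over $k$ banded by $L$ exist because the obstruction in $\H^{3}\bigl(k,Z(G)_{\rho}\bigr)$ vanishes, and they form a torsor under $\H^{2}\bigl(k,Z(G)_{\rho}\bigr)$; both groups are zero because $Z(G)$ is finite and $\operatorname{cd}(k)\le1$. Hence $\cG$ is the unique gerbe banded by $L$, namely $\cB_{k}H$, and in particular it is neutral; this gives the claim.

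The step I expect to be the real work is the first one: setting up Giraud's obstruction theory carefully enough to see that lifting $\rho$ past the possibly nonabelian kernel $G/Z(G)$ is governed only by $\H^{\ge2}$ of finite modules, together with the elementary but necessary verification that $k(\xi)/k$ is separable so that the bound on cohomological dimension is inherited. The cohomology vanishing itself is immediate from the hypothesis $\operatorname{cd}(k)\le1$.
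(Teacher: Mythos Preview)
Your overall strategy coincides with the paper's: reduce to showing that the residual gerbe $\cG_{\xi}$ over $k(\xi)$ is neutral, lift the outer-action $\rho\colon\Gamma\to\out G$ to $\aut G$ to obtain a $k$-form $\underline{G}$ of $G$ representing the band, and then use Giraud's classification (gerbes banded by $L$ are a torsor under $\H^{2}$ of the center, which vanishes) to conclude $\cG_{\xi}\simeq\cB_{k}\underline{G}$. The second step is exactly what the paper does, citing \cite[Th\'eor\`eme~3.3.3]{giraud}.

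The difference is in your first step. The paper dispatches the lifting in one line: a profinite group of cohomological dimension~$\le1$ is \emph{projective} in the category of profinite groups (Serre, \emph{Cohomologie galoisienne}, I.\S3.4), so any continuous $\Gamma\to\out G$ lifts along the surjection $\aut G\twoheadrightarrow\out G$ automatically, with no hypothesis on the kernel. Your proposed route---``the obstruction\ldots is assembled from classes in $\H^{i}(k,A)$ with $i\ge2$''---is only straightforward when one can filter the kernel $G/Z(G)$ by characteristic subgroups with abelian quotients, i.e.\ when $G/Z(G)$ is solvable; for general $G$ you would in effect be reproving Serre's projectivity criterion. So what you flag as ``the real work'' is in fact a single citation, and you should replace the obstruction-theory sketch by it.

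Two minor points. First, the inheritance $\operatorname{cd}\bigl(k(\xi)\bigr)\le\operatorname{cd}(k)$ holds for \emph{any} algebraic extension, so you do not need separability of $k(\xi)/k$ for that purpose (though it is true here, since $G$ is \'etale). Second, your parenthetical about $\Gamma$ being torsion-free is correct but does not obviously help with the lift; it is another symptom of projectivity rather than a substitute for it.
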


These two propositions are immediate corollaries of the following standard application of nonabelian cohomology.

\begin{lemma}\label{lem:moduli}
Let $G$ be a finite group, $k$ a field. Let $\cG \arr \aff[k]$ be a gerbe such that $\cG_{\overline{k}}$ is isomorphic to $\cB_{\overline{k}}G$. Assume that either

\begin{enumerate1}

\item $G$ has trivial center, and $\aut G \arr \out G$ is split, or

\item $k$ has cohomological dimension~$1$.

\end{enumerate1}

Then $\cG$ is neutral.
\end{lemma}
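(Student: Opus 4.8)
The statement to prove is Lemma~\ref{lem:moduli}: given a finite group $G$, a field $k$, and a gerbe $\cG \arr \aff[k]$ with $\cG_{\overline{k}} \simeq \cB_{\overline{k}}G$, then $\cG$ is neutral under either hypothesis (1) or (2). The strategy is the classical Grothendieck--Giraud dictionary: such gerbes, banded by the band (lien) associated to $G$, are classified by a nonabelian cohomology set, and the obstruction to neutrality lives in an $\H^{2}$. More precisely, a gerbe over $k$ locally isomorphic to $\cB_{\overline{k}}G$ determines a band $L$ over $k$ which is a $\overline{k}/k$-form of the band $\mathrm{lien}(G)$ coming from $G$; the set of equivalence classes of $L$-gerbes is a torsor-like set $\H^{2}(k, L)$ with a distinguished ``neutral'' subset (the gerbes admitting a section), and the band $L$ itself is classified by a continuous cocycle $\operatorname{Gal}(\overline{k}/k) \arr \out G$, i.e. an element of $\H^{1}(k, \out G)$ (here $\out G$ carries the discrete topology since $G$ is finite). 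So the first step is to recall this formalism and reduce the lemma to: (a) show every band $L$ of this type is induced from an honest group scheme, i.e. comes from a genuine Galois action $\operatorname{Gal}(\overline{k}/k) \arr \aut G$ lifting the given map to $\out G$; and (b) show that the resulting gerbe, now banded by an actual finite $k$-group scheme $G'$, is neutral, i.e. $\H^{2}(k, G') $ vanishes or the specific class does.

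\textbf{Case (1).} Here $Z(G) = 1$, so $G \xrightarrow{\sim} \aut G$ onto $\mathrm{Inn}(G)$, and $1 \arr G \arr \aut G \arr \out G \arr 1$ is split by hypothesis. The splitting $s\colon \out G \arr \aut G$ lets us lift the classifying cocycle $\operatorname{Gal}(\overline{k}/k) \arr \out G$ of the band to a homomorphism $\operatorname{Gal}(\overline{k}/k) \arr \aut G$, which (since $G$ is finite, hence $\aut G$ is finite and the map is automatically continuous) defines a twisted form $G'$ of $G$ as a finite �tale $k$-group scheme, with band equal to $L$. Now I invoke the standard fact (Giraud) that when $G$ has trivial center, the band $L = \mathrm{lien}(G')$ has trivial ``$\H^{2}$ obstruction'' in the sense that the set of $L$-gerbes has a canonical neutral element, namely $\cB_{k}G'$ — indeed for center-free bands the gerbe is \emph{determined} by its band up to equivalence, so $\cG \simeq \cB_{k}G'$ and is therefore neutral. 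The one thing to check carefully is that the band of $\cG$ really is $L$ and not some other form; this follows because the band is a $\overline{k}/k$-descent datum recording exactly the $\out G$-valued cocycle, which is intrinsic to $\cG$, and our $G'$ was built from that same cocycle.

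\textbf{Case (2).} Now $\operatorname{cd}(k) \le 1$. First lift the band: the obstruction to lifting the cocycle $\operatorname{Gal}(\overline{k}/k) \arr \out G$ to $\aut G$ is a class in $\H^{2}(k, Z(G))$ with the appropriate twisted action, which vanishes since $\operatorname{cd}(k) \le 1$ kills all $\H^{2}$ with finite (torsion) coefficients. So again the band is that of an honest finite �tale $k$-group $G'$. Then the gerbe $\cG$, being banded by $G'$, is classified by an element of $\H^{2}(k, G')$ (nonabelian $\H^{2}$, which makes sense as a pointed set / $\H^{2}(k,Z(G'))$-torsor over the lifted band), and this again vanishes by $\operatorname{cd}(k) \le 1$; hence $\cG$ is neutral. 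Both steps use only that all relevant coefficient groups are finite and that cohomological dimension $\le 1$ annihilates degree-$2$ Galois cohomology of finite modules, which is the defining property (Serre, \emph{Galois Cohomology}, I.3).

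\textbf{Main obstacle.} The substantive point is getting the nonabelian $\H^{2}$ bookkeeping exactly right — in particular, being precise that ``the band of $\cG$ is a form of $\mathrm{lien}(G)$'' and that lifting that band to a group scheme is governed by $\H^{2}(k, Z(G))$, and, in case (1), correctly citing that center-free bands have essentially unique gerbes (so neutrality is automatic once a group-scheme model exists). None of this is new, but it must be stated in the generality of fppf gerbes over $\aff[k]$ rather than just topological Galois cohomology; since $G$ is assumed finite and reduced, the fppf and �tale (Galois) pictures agree, so I would reduce to the classical statements in Giraud \cite{giraud} and Serre, remarking that the finiteness of $G$ makes all cocycles continuous/locally constant so no topological subtlety arises. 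I expect the write-up of case (1) to be the fiddlier of the two, precisely because it is the one place where we use a structural fact about bands (triviality of the $\out$-to-$\aut$ extension \emph{plus} center-freeness) rather than a brute cohomological vanishing.
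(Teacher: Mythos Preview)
Your approach is essentially the paper's: reduce to Galois cohomology via the band, lift the $\out G$-valued cocycle to an $\aut G$-valued one to obtain a twisted finite \'etale group scheme $G'$ over $k$, and then argue that $\cG \simeq \cB_{k}G'$ by uniqueness of the gerbe with that band. The paper carries this out by first invoking Lemma~\ref{lem:closed-neutral} to work over $k^{\mathrm s}$ rather than $\overline{k}$, and then citing Giraud's Th\'eor\`eme~3.3.3 uniformly for both cases to conclude uniqueness of the $L$-gerbe.

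One point needs correction. In case~(2) you write that the obstruction to lifting $\operatorname{Gal}(\overline{k}/k) \to \out G$ along $\aut G \to \out G$ lies in $\H^{2}(k, Z(G))$. The kernel of $\aut G \to \out G$ is $\operatorname{Inn}(G) \cong G/Z(G)$, not $Z(G)$, and since $\operatorname{Inn}(G)$ need not be abelian there is no clean $\H^{2}$ obstruction group here. The paper sidesteps this by using directly that a profinite group of cohomological dimension~$\le 1$ is \emph{projective} in the category of profinite groups (Serre, \emph{Cohomologie Galoisienne}, I~\S5, Prop.~45), which gives the lifting property for free. Your second use of $\operatorname{cd}(k)\le 1$, to kill the $\H^{2}(k, Z(G'))$-torsor of $L$-gerbes, is correct and is exactly what Giraud's theorem provides.
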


\begin{proof}
	Denote by $k^{s}$ the separable closure of $k$ and by $\Gamma=\operatorname{Gal}(k^{s}/k)$ be the absolute Galois group of $k$, we have that $\cG_{k^{s}}$ is isomorphic to $\cB_{k^{s}}G$ by the following Lemma~\ref{lem:closed-neutral}. The tautological section $\spec k^{s}\to \cB_{k^{s}}G\to \cG$ induces a continuous homomorphism $\Gamma\to\out G$. Under both conditions (1) and (2), we have a lifting $\Gamma\to\aut G$: for (1) this is obvious, while for (2) this follows from the fact that $G$ is projective \cite[Proposition~45]{serre}. By descent theory, the homomorphism $\Gamma\to\aut G$ induces a finite étale group scheme $\underline{G}$ which is a twisted form of $G$.
	
	Let $L$ be the non-abelian band, in the sense of \cite{giraud}, of $\cG$. By construction, $L$ is represented by $\underline{G}$. By \cite[Théorème 3.3.3]{giraud}, under both conditions (1) and (2), the gerbe $\cG$ is the only gerbe banded by $L$. Since the classifying stack $\cB_{k}\underline{G}$ is banded by $L$ by construction, we get that $\cG\simeq \cB_{k}\underline{G}$, i.e. $\cG$ is neutral.
\end{proof}

\begin{lemma}\label{lem:closed-neutral}
	A finite gerbe with unramified diagonal over a separably closed field is neutral.
\end{lemma}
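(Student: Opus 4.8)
The plan is to avoid any analysis of purely inseparable descent and instead show directly that the structure morphism $\cG \to \spec k$ is \'etale, after which a $k$-point falls out of any \'etale atlas.

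First I would record that $\cG$ is Deligne--Mumford: an algebraic stack with unramified diagonal is Deligne--Mumford. Next, since $\cG$ is a gerbe, the inertia stack $I_\cG \to \cG$ — which is the base change of the diagonal $\cG \to \cG\times_k\cG$ along itself — is flat over $\cG$: fppf-locally on $\spec k$ the gerbe is of the form $\cB_U G_U$ for some object with automorphism group scheme $G_U$, and $\cB_U G_U$ is an algebraic stack only when $G_U \to U$ is flat, so the inertia is fppf-locally this flat $G_U$. By hypothesis the diagonal is unramified, hence so is $I_\cG \to \cG$, and it is finite because $\cG$ is a finite gerbe; a finite, flat, unramified morphism is finite \'etale. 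In particular each $G_U$ above is finite \'etale over $U$.

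Then I would check $\cG \to \spec k$ is \'etale. Over an fppf cover $U \to \spec k$ on which $\cG$ becomes neutral we have $\cG_U \simeq \cB_U G_U$ with $G_U$ finite \'etale; the classifying stack of a finite \'etale group scheme is \'etale over its base, since the trivial torsor gives a finite \'etale atlas $U \to \cB_U G_U$ whose composite with the structure map is the identity, and one descends \'etaleness of the identity along this faithfully flat quasi-compact atlas. Hence $\cG_U \to U$ is \'etale, and by fppf descent along $U \to \spec k$ the morphism $\cG \to \spec k$ is \'etale. Now, being Deligne--Mumford, $\cG$ admits a surjective \'etale morphism $V \to \cG$ from a scheme; then $V \to \cG \to \spec k$ exhibits $V$ as a nonempty \'etale $k$-scheme. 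Since $k$ is separably closed, $V$ has a rational point $v$, and $\spec k \xrightarrow{v} V \to \cG$ is an object of $\cG(k)$; thus $\cG$ is neutral.

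Almost every step here is a routine manipulation of \'etale morphisms, atlases, and descent; the one place deserving a careful sentence is the assertion that the inertia of an \emph{algebraic} gerbe is flat over the gerbe — i.e.\ that ``unramified diagonal'' genuinely upgrades to ``\'etale diagonal'' in this setting — which is where the algebraicity hypothesis on $\cG$ is used and which I would justify via the fppf-local presentation $\cB_U G_U$ and the algebraicity of $\cB_U G_U$.
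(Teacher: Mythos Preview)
Your argument is correct, but it follows a genuinely different route from the paper's. The paper proceeds by explicit descent: it picks a finite (necessarily purely inseparable) extension $k'/k$ with a section $s \in \cG(k')$, observes that $\underisom(p_1^*s, p_2^*s)$ is finite \'etale over the artinian local ring $k'\otimes_k k'$, lifts the tautological section over the closed point $\spec k' \hookrightarrow \spec(k'\otimes_k k')$ to the whole ring, and concludes by fppf descent. In other words, the paper embraces exactly the purely inseparable descent that you set out to avoid.

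Your approach instead establishes a global structural fact --- that $\cG \to \spec k$ is \'etale --- and then reads off the section from any \'etale atlas. This is clean and conceptual, and gives a bit more (namely, \'etaleness of $\cG$ itself), at the cost of invoking the characterisation of algebraic gerbes via flat inertia; the key point you flag at the end, that an algebraic gerbe has flat inertia, is \cite[\href{https://stacks.math.columbia.edu/tag/06QB}{Tag~06QB}]{stacks-project}, and you could cite it directly rather than arguing through the local presentation $\cB_U G_U$. By contrast, the paper's argument is shorter and uses only the most elementary facts about \'etale covers of artinian local rings and fppf descent for objects, without any general results on the structure of algebraic gerbes.
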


\begin{proof}
	Let $k$ be a separably closed field with algebraic closure $\overline{k}$, and $\cG$ a finite gerbe over $k$ with unramified diagonal. There exists a finite extension $k'/k$ with a section $s\in\cG(k')$. The scheme $\underisom(p_{1}^{*}s,p_{2}^{*}s)$ is finite étale over the artinian local ring $k'\otimes_{k}k'$ and we have a lifting $\spec k'\to\underisom(p_{1}^{*}s,p_{2}^{*}s)$ of the closed point $\spec k'\subset\spec k'\otimes_{k}k'$, hence we get a section $\spec k'\otimes_{k}k'\to\underisom(p_{1}^{*}s,p_{2}^{*}s)$. By descent theory we obtain that $s$ descends to $k$.
\end{proof}

These results do not apply in many cases of great interest, for example, when $G$ is abelian and $k$ is a number field. 

Our main theorem, which is a generalization of \cite[Corollary~4.3(c)]{debes-emsalem}, gives a criterion for this to happen, when $\xi$ is tame, for an interesting class of stacks, whose objects are algebraic spaces with additional structure.

\section{The main theorem}\label{sec:main}

\subsection{Categories of structured spaces} We will denote by $\as$ the fibered category over $\aff$ whose objects over an affine scheme $S$ over $k$ are flat finitely presented morphisms $X \arr S$, where $X$ is an algebraic space.

\begin{definition}
A \emph{category of structured spaces} over $k$ is a locally finitely presented fppf stack $\cM \arr \aff$, with a faithful cartesian functor $\cM \arr \as$.

\end{definition}

Examples spring to mind.

\begin{examples}\hfil
\begin{enumerate1}

\item The category $\as$ itself is a category of structured spaces.

\item Any condition that we impose on objects $X \to S$ in $\as$ that is stable under base change and fppf local defines a full subcategory $\cM \subseteq \as$ that is a category of structured spaces. Thus, for example, the stacks $\cM_{g}$ and $\overline{\cM}_{g}$ of smooth, or stable, curves of genus $g$, the stack of smooth abelian varieties, of projective surfaces, and so on.

\item The stacks $\cM_{g,n}$ and $\overline{\cM}_{g,n}$ of $n$-pointed smooth, or stable, curves of genus~$g$ are all categories of structured spaces.

\item The category of smooth polarized projective schemes is a category of structured spaces.

\end{enumerate1}
\end{examples}

An object $\xi$ of a category of structured spaces $\cM$ will be denoted by $(X \to S, \xi)$, or simply $(X,\xi)$, where $X \arr S$ is the image of $\xi$ in $\as$; we want to think of $(X, \xi)$ as an algebraic space with an additional structure.

A category of structured spaces $\cM$ has a \emph{universal family} $\cX \arr \aff$; the objects of $\cX$ are triples $(X \to S, \xi, x)$, where $(X \arr S, \xi)$ is an object of $\cM(S)$, and $x\colon S \arr X$ is a section of the morphism $X \arr S$. We have an obvious morphism $\cX \arr \cM$ which forgets the section. If $S \arr \cM$ is a morphism, corresponding to an object $(X \to S, \xi)$ of $\cM(S)$, then the fibered product $S \times_{\cM} \cX$ is equivalent to $X$; hence the morphism $\cX \arr \cM$ is representable, flat and finitely presented. The universal family is itself a category of structured spaces.

Now, let $(X, \xi)$ be an algebraic object of $\cM(\overline{k})$, and denote by $\cX_{(X,\xi)}$ the fibered product $\cG_{(X, \xi)}\times_{\cM}\cX$. We have a commutative diagram
   \[
   \begin{tikzcd}
   X\rar\dar\ar[rd, phantom, "\square"]& {[X/\underaut_{\overline{k}}\xi]}\ar[rd, phantom, "\square"] \rar\dar
   & \cX_{(X, \xi)}\ar[rd, phantom, "\square"] \rar \dar & \cX \dar\\
   \spec \overline{k}\rar\ar[dr, equal]&\cB_{\overline{k}}\underaut_{\overline{k}}\xi \dar\rar\ar[rd, phantom, "\square"] &
   \cG_{(X, \xi)} \dar\rar & \cM\dar\\
   &\spec \overline{k} \rar & \spec k(X, \xi) \rar & \spec k
   \end{tikzcd}
   \]
in which the squares marked with $\square$ are cartesian.

\begin{definition}
	Assume that $\cG_{(X,\xi)}$ (and hence $\cX_{(X,\xi)}$) has finite inertia. The \emph{compression} of $(X,\xi)$ is the coarse moduli space of $\cX_{(X,\xi)}$, it is an algebraic space over the field of moduli $k(X,\xi)$. We denote the compression using bold letters, for instance we write $\bX_{(X,\xi)}$ for the compression of $(X,\xi)$.
\end{definition}

Since formation of moduli spaces commutes with flat base change (see \cite{conrad}) we have
   \[
   \spec \overline{k} \times_{\spec k} \bX_{(X, \xi)} = X/\underaut_{\overline{k}}\xi\,.
   \]
In other words, while $X$ does not necessarily descend to $k(X, \xi)$, the quotient $X/\underaut_{\overline{k}}\xi$ always does, in a canonical fashion. This is a more general version of \cite[Theorem 3.1]{debes-emsalem}.

\subsection{The main result}

\begin{theorem}\label{thm:moduli}
Let $\cM \arr \aff$ be a category of structured spaces, $(X, \xi)\in \cM(\overline{k})$ a tame object with $X$ integral.

Assume that there exists a dominant rational map $Y \dashrightarrow \bX_{(X, \xi)}$ where $Y$ is an integral algebraic space of finite type over $k(X,\xi)$ with a $k(X,\xi)$-rational regular point. Then $(X,\xi)$ is defined over its field of moduli $k(X, \xi)$.
\end{theorem}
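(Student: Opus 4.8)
The plan is to reduce the statement to showing that the residual gerbe $\cG_{(X,\xi)}$ has a $k(X,\xi)$-rational point, and then to obtain such a point from the Lang--Nishimura theorem for tame stacks (Theorem~\ref{LN}). Recall from \S\ref{sec:field-moduli} that $(X,\xi)$ is defined over its field of moduli if and only if $\cG_{(X,\xi)}\bigl(k(X,\xi)\bigr) \neq \emptyset$; this is what I would prove.

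The first step would be to use that a category of structured spaces carries a \emph{faithful} cartesian functor $\cM \arr \as$, which forces the natural morphism $\underaut_{\overline{k}}\xi \arr \underaut_{\overline{k}}(X/\overline{k})$ to be a monomorphism; thus $G \eqdef \underaut_{\overline{k}}\xi$ acts faithfully on $X$. Since $\xi$ is tame, $G$ is finite and linearly reductive, and I would invoke the fact that a faithful action of such a group scheme on an integral algebraic space is generically free, i.e.\ that the quotient stack $[X/G]$ has trivial generic inertia. Because $\cX_{(X,\xi)}$ has finite inertia, the locus of points with non-trivial automorphisms is closed in $\cX_{(X,\xi)}$, and its image under the proper coarse moduli morphism $\pi\colon \cX_{(X,\xi)} \arr \bX_{(X,\xi)}$ is closed; after extending scalars to $\overline{k}$, where $\cX_{(X,\xi)}$ becomes $[X/G]$ by the cartesian diagram preceding the definition of the compression, the complement of that image is non-empty, hence dense since $\bX_{(X,\xi)}$ is integral. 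As inertia commutes with base change, there is already over $k(X,\xi)$ a dense open $V \subseteq \bX_{(X,\xi)}$ over which $\pi$ is an isomorphism; composing the inverse of $\pi|_{V}$ with the inclusion then gives a morphism $V \arr \cX_{(X,\xi)}$ lifting $V \into \bX_{(X,\xi)}$.

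Next I would compose rational maps. Since $Y \dashrightarrow \bX_{(X,\xi)}$ is dominant and $V$ is a dense open, this rational map factors generically through $V$; composing it with $V \arr \cX_{(X,\xi)}$ and then with the structural morphism $\cX_{(X,\xi)} \arr \cG_{(X,\xi)}$ produces a rational map $h\colon Y \dashrightarrow \cG_{(X,\xi)}$. By Proposition~\ref{prop:fppf-gerbe} together with the tameness of $\xi$, the target $\cG_{(X,\xi)}$ is a finite tame gerbe over $\spec k(X,\xi)$, hence a proper tame stack over $k(X,\xi)$. Since $Y$ is integral, of finite type over $k(X,\xi)$, and carries a regular $k(X,\xi)$-rational point, applying Theorem~\ref{LN} to $h$ would yield $\cG_{(X,\xi)}\bigl(k(X,\xi)\bigr) \neq \emptyset$, which finishes the proof. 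One sanity check: taking $Y = \bX_{(X,\xi)}$ with the identity recovers the statement that $(X,\xi)$ descends to its field of moduli whenever $\bX_{(X,\xi)}$ has a regular rational point, which for smooth curves (where $\bX_{(X,\xi)} = X^{\rmc}$ is itself smooth) specializes to the classical theorem.

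The hard part will be the generic freeness step. When $G$ is \'etale --- in particular in characteristic zero --- it is the classical fact that a faithful action of a finite group on an integral separated scheme is free at the generic point, but for a non-reduced linearly reductive $G$ it needs input from the structure theory of finite linearly reductive group schemes; one also has to check carefully that the trivial-inertia locus of the coarse moduli morphism descends from $\overline{k}$ to the field of moduli. A secondary point to be careful about is that Theorem~\ref{LN} is applied to a rational map $h$ whose locus of definition near the prescribed regular point is not under our control --- exactly the situation a Lang--Nishimura-type statement is meant to handle --- and that one should record that a finite gerbe over a field is proper, so that the properness hypothesis in Theorem~\ref{LN} is met.
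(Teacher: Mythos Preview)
Your proposal is correct and follows essentially the same route as the paper's own proof: obtain a dense open of $\cX_{(X,\xi)}$ on which the coarse moduli map is an isomorphism (equivalently, an open substack that is an algebraic space), compose the given dominant rational map $Y\dashrightarrow \bX_{(X,\xi)}$ through this open into $\cX_{(X,\xi)}\to\cG_{(X,\xi)}$, and finish with the Lang--Nishimura theorem for tame stacks.

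The only substantive difference is emphasis. You flag generic freeness in the non-reduced linearly reductive case as ``the hard part'', but the paper dispatches it in one line: since the action is faithful and $\underaut_{\overline{k}}\xi$ is a finite group scheme over $\overline{k}$, it has only finitely many non-trivial subgroup schemes, and for each the fixed locus is a proper closed subset of the integral $X$; the complement of their union is the required free locus. No structure theory beyond this is needed. Likewise, your descent worry is handled exactly as you sketch: the trivial-inertia locus is an open substack of $\cX_{(X,\xi)}$ defined over $k(X,\xi)$, so no separate descent argument from $\overline{k}$ is required.
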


In this proof, and in the rest of the paper, a crucial role is played by the Lang--Nishimura theorem for tame stack that we prove in \cite{giulio-angelo-valuative}. For the convenience of the reader we recall its statement.

\begin{theorem}[{\cite[Theorem 4.1]{giulio-angelo-valuative}}]\label{LN}
	Let $S$ be a scheme and $X\dashrightarrow Y$ a rational map of algebraic stacks over $S$, with $X$ locally noetherian and integral and $Y$ tame and proper over $S$. Let $k$ be a field, $s\colon \spec k \arr S$ a morphism. Assume that $s$ lifts to a regular point $\spec k \arr X$; then it also lifts to a morphism $\spec k \arr Y$.
\end{theorem}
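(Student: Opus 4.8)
The plan is to produce a $k(X,\xi)$-rational point of the residual gerbe $\cG_{(X,\xi)}$; as recorded in the discussion of the basic question, this is precisely what it means for $(X,\xi)$ to be defined over its field of moduli. Write $K \eqdef k(X,\xi)$. Since $\xi$ is tame, $\cG_{(X,\xi)}$ is a finite tame gerbe over $\spec K$ by Proposition~\ref{prop:fppf-gerbe}; being finite, it is in particular proper over $\spec K$, so it is an admissible target for the Lang--Nishimura theorem for tame stacks (Theorem~\ref{LN}). The strategy is therefore to manufacture a rational map $Y \dashrightarrow \cG_{(X,\xi)}$ over $K$ and then feed the regular $K$-point of $Y$ into Theorem~\ref{LN}.

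I would build this rational map in three steps. First, I claim that the coarse moduli map $\pi\colon \cX_{(X,\xi)} \arr \bX_{(X,\xi)}$ admits a rational section over $K$. Over $\overline{k}$ the stack $\cX_{(X,\xi)}$ is $[X/\underaut_{\overline{k}}\xi]$ and $\pi$ is the quotient map $[X/\underaut_{\overline{k}}\xi] \arr X/\underaut_{\overline{k}}\xi$. Because the functor $\cM \arr \as$ is faithful, $\underaut_{\overline{k}}\xi$ acts faithfully on the integral space $X$, and a faithful action of a finite group on an integral algebraic space is generically free: any group element fixing the generic point has closed fixed locus (the diagonal being a closed immersion), and this locus then contains a dense point of $X$, hence equals $X$, forcing the element to be the identity. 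Consequently $\pi$ is an isomorphism over the open substack where $\cX_{(X,\xi)}$ is representable. The image $U \subseteq \bX_{(X,\xi)}$ of this locus is open and defined over $K$, and after base change to $\overline{k}$ it becomes the free locus of $X/\underaut_{\overline{k}}\xi$, which is nonempty. Since $\bX_{(X,\xi)}\times_{\spec K}\spec\overline{k} = X/\underaut_{\overline{k}}\xi$ is integral, $\bX_{(X,\xi)}$ is integral and $U$ is dense; inverting $\pi$ over $U$ yields the desired rational section $\sigma\colon \bX_{(X,\xi)} \dashrightarrow \cX_{(X,\xi)}$.

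Second, I compose $\sigma$ with the forgetful morphism $\cX_{(X,\xi)} \arr \cG_{(X,\xi)}$ to obtain a rational map $\bX_{(X,\xi)} \dashrightarrow \cG_{(X,\xi)}$ defined on $U$. Third, I precompose with the given dominant rational map $Y \dashrightarrow \bX_{(X,\xi)}$: dominance guarantees that the preimage of $U$ is a nonempty, hence dense, open of the integral space $Y$, so the composite $Y \dashrightarrow \cG_{(X,\xi)}$ is a genuine rational map over $K$. Finally I apply Theorem~\ref{LN} with $S = \spec K$, source $Y$ (integral and locally noetherian, as it is of finite type over $K$), target $\cG_{(X,\xi)}$ (tame and proper over $S$), and $s = \id_{\spec K}$ lifted to the given regular $K$-point of $Y$. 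The theorem supplies a lift $\spec K \arr \cG_{(X,\xi)}$, i.e.\ a $K$-point of $\cG_{(X,\xi)}$, which is exactly what is required.

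The genuinely delicate step is the construction of the rational section $\sigma$ in the first step; once it is in hand, the remaining assembly and the invocation of Theorem~\ref{LN} are formal. Two points there deserve care: the generic freeness of the automorphism action (which, via the closed-fixed-locus argument, is characteristic-free and uses only faithfulness of $\cM \arr \as$ together with integrality of $X$), and the verification that the representable locus descends to $K$ and remains dense in $\bX_{(X,\xi)}$, for which I would use that formation of the coarse space of a tame stack commutes with base change and that the geometric fibre $X/\underaut_{\overline{k}}\xi$ is integral. When $X$ is a smooth curve, $\underaut_{\overline{k}}\xi$ acts with $U$ equal to (a dense open of) the already-smooth compression, so no genuine section is needed and the argument specializes to the theorem of D\'ebes--Emsalem.
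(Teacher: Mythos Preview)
Your proposal does not address the stated theorem. Theorem~\ref{LN} is the Lang--Nishimura theorem for tame stacks, quoted from \cite{giulio-angelo-valuative}; the present paper does not prove it, but merely recalls its statement as a tool. What you have written is instead a proof of Theorem~\ref{thm:moduli}, the main theorem of the paper, which \emph{applies} Theorem~\ref{LN}. You invoke Theorem~\ref{LN} in your final step, so your argument is circular as a proof of Theorem~\ref{LN} itself.

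If, on the other hand, your intent was to prove Theorem~\ref{thm:moduli}, then your argument is correct and matches the paper's proof essentially verbatim. The paper phrases the key step as exhibiting an open substack $\cU \subseteq \cX_{(X,\xi)}$ that is an algebraic space and maps by an open embedding into $\bX_{(X,\xi)}$; you phrase the same step as a rational section $\sigma\colon \bX_{(X,\xi)} \dashrightarrow \cX_{(X,\xi)}$. Both rest on the observation that the faithful action of $\underaut_{\overline{k}}\xi$ on the integral space $X$ is generically free, and both finish by composing $Y \dashrightarrow \cU \subseteq \cX_{(X,\xi)} \arr \cG_{(X,\xi)}$ and invoking Theorem~\ref{LN}.
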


\begin{proof}[Proof of Theorem~\ref{thm:moduli}]
	There exists an $\underaut_{\overline{k}}\xi$-invariant open subset $U\subset X$ such that the action of $\underaut_{\overline{k}}\xi$ on $U$ is free: by hypothesis the action of $\underaut_{\overline{k}}\xi$ on $X$ is faithful, and for each non-trivial subgroup $G\subseteq \underaut_{\overline{k}}\xi$ the locus of points of $X$ fixed by $G$ is a proper closed subset of $X$. This in turn implies that there exists an open substack $\cU\subset\cX_{(X, \xi)}$ which is an algebraic space; then the composite $\cU \subseteq \cX_{(X, \xi)} \arr \bX_{(X, \xi)}$ is an open embedding.
	
	Because of this, the hypothesis gives us a rational map $Y\dashrightarrow \cU$. We conclude by applying Theorem~\ref{LN} to the composite $Y\dashrightarrow \cU\subset\cX_{(X, \xi)}\to \cG_{(X, \xi)}$.
\end{proof}

Note that if $X$ is smooth of dimension $1$ over $\overline{k}$, then $\bX_{(X,\xi)}$ is also smooth over $k(X,\xi)$; hence in this case we get the following.

\begin{corollary}\label{cor:moduli}
Let $\cM \arr \aff$ be a category of structured spaces, $(X, \xi)\in \cM(\overline{k})$ a tame object such that $X$ is integral, smooth and $1$-dimensional. Assume that the compression $\bX_{(X,\xi)}$ has a $k(X,\xi)$-rational point.  Then $(X,\xi)$ is defined over its field of moduli $k(X, \xi)$.
\end{corollary}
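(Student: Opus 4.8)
The plan is to obtain this as a direct specialization of Theorem~\ref{thm:moduli}: when $X$ is a smooth curve the compression $\bX_{(X,\xi)}$ is itself smooth, so it can play the role of the source $Y$, with the rational map taken to be the identity.

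First I would check that $\bX_{(X,\xi)}$ is a smooth, integral algebraic space of finite type over $k(X,\xi)$. Finite type is automatic: $\cX_{(X,\xi)}$ is of finite type over $k(X,\xi)$ and has finite inertia, so its coarse space is of finite type. For the geometric properties, recall from the discussion preceding Theorem~\ref{thm:moduli} that $\spec\overline{k}\times_{\spec k}\bX_{(X,\xi)} \simeq X/\underaut_{\overline{k}}\xi$. Since $X$ is an integral smooth curve over $\overline{k}$ and $\underaut_{\overline{k}}\xi$ is finite and linearly reductive (this is where tameness enters), the quotient $X/\underaut_{\overline{k}}\xi$ is normal and one-dimensional, hence regular, hence smooth over $\overline{k}$; and it is integral, being a quotient of an integral algebraic space. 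Reducedness and smoothness descend along the faithfully flat extension $k(X,\xi)\to\overline{k}$, and irreducibility descends because the projection $X/\underaut_{\overline{k}}\xi\to\bX_{(X,\xi)}$ is surjective; so $\bX_{(X,\xi)}$ is smooth and integral over $k(X,\xi)$.

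Then I would apply Theorem~\ref{thm:moduli} with $Y \eqdef \bX_{(X,\xi)}$ and the dominant rational map $Y\dashrightarrow\bX_{(X,\xi)}$ equal to the identity. By hypothesis $\bX_{(X,\xi)}$ has a $k(X,\xi)$-rational point, and since $\bX_{(X,\xi)}$ is smooth over $k(X,\xi)$ this point is regular; thus all the hypotheses of Theorem~\ref{thm:moduli} are met, and we conclude that $(X,\xi)$ is defined over its field of moduli.

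There is essentially no obstacle here: the corollary is precisely the case of Theorem~\ref{thm:moduli} in which the compression is already smooth. The only step meriting a word of care is the smoothness of $\bX_{(X,\xi)}$, which comes down to the classical fact that a tame quotient of a smooth curve is smooth, together with faithfully flat descent of smoothness, reducedness and irreducibility from $\overline{k}$ down to $k(X,\xi)$.
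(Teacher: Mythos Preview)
Your proposal is correct and follows exactly the paper's approach: the paper does not give a separate proof of this corollary, but simply observes in the sentence preceding it that if $X$ is smooth of dimension~$1$ then $\bX_{(X,\xi)}$ is smooth over $k(X,\xi)$, and then invokes Theorem~\ref{thm:moduli}. You have spelled out precisely this argument, with the details of why the compression is smooth and integral filled in.

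One minor quibble: your parenthetical ``this is where tameness enters'' is slightly misplaced. The smoothness of $X/\underaut_{\overline{k}}\xi$ in dimension~$1$ does not actually require linear reductivity --- a quotient of a smooth curve over $\overline{k}$ by any finite group is normal, hence regular, hence smooth. Tameness is genuinely used elsewhere: it is a hypothesis of Theorem~\ref{thm:moduli}, and it is what guarantees that formation of the moduli space $\bX_{(X,\xi)}$ commutes with the base change to $\overline{k}$ (so that you know the geometric fiber is $X/\underaut_{\overline{k}}\xi$ in the first place). This does not affect the validity of your argument.
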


When $(X,\xi)$ is a smooth projective curve with no additional structure, this is \cite[Corollary~4.3(c)]{debes-emsalem}.

\subsection{The case of pointed spaces}\label{ssec:pointed}

One case in which we can ensure the existence of a rational point on $\bX_{(X, \xi)}$ is the case of pointed spaces. For the rest of the paper we will use the following definition.

\begin{definition}
A \emph{pointed space} $(X, p)$ over an affine scheme $S$ over $k$ is a flat locally finitely presented morphism $X \arr S$ with a section $p\colon S \arr X$ landing in the smooth locus of $f$.
\end{definition}

Pointed spaces form a fibered category $\pas \arr \aff$; there is an obvious representable cartesian functor $\pas \arr \as$, which forgets the section.

\begin{definition}
A \emph{category of pointed structured spaces} over $k$ is a locally finitely presented fppf stack $\cM \arr \aff$, with a faithful cartesian functor $\cM \arr \pas$.
\end{definition}

If $\cM \arr \pas$ is a category of pointed structured spaces and $S$ is a scheme, an element of $\cM(S)$ will be denoted by $(X, p, \xi)$, where $(X,\xi)$ is the corresponding structured space given by the composition $\cM \arr \pas \arr \as$ and $p\colon S \arr X$ is the given section.

If $\cM$ is a category of pointed structured spaces, it can be considered as a category of structured spaces by composing $\cM \arr \pas$ with the forgetful morphism $\pas \arr \as$. One can think of categories of pointed structured spaces as a category of structured spaces, in which the structure includes a smooth marked point.

There are many natural examples of categories of pointed structured spaces: for example, the category of abelian varieties, or the category of $n$-pointed stable, or smooth, curves, for $n \geq 1$.

\begin{lemma}\label{lem:pointed}
	Let $\cM$ be a category of pointed structured spaces, $(X, p, \xi) \in \cM(\overline{k})$ a tame object. The compression $\bX_{(X, p, \xi)}$ has a rational point $\bfp$ over $k(X, p, \xi)$ such that $\bfp_{\overline{k}}$ corresponds to $p$ via the identification $\bX_{(X, p, \xi),\overline{k}} = X/\underaut(X, p, \xi)$. 
\end{lemma}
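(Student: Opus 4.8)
The plan is to produce the rational point $\bfp$ by descending the marked point $p$ through the chain of cartesian squares relating $X$, the quotient stack $[X/\underaut(X,p,\xi)]$, the residual gerbe, and the compression. The key observation is that $p\in X(\overline k)$ lands in the smooth locus of $X\arr\spec\overline k$, and — crucially — that it is carried to a $\underaut_{\overline k}\xi$-\emph{fixed} point. This last fact is exactly what makes the argument work: the section $p$ is part of the structure, so every automorphism of $(X,p,\xi)$ fixes $p$; hence $p$ defines a morphism $\spec\overline k\arr[X/\underaut_{\overline k}\xi]$, i.e. a lift of $\spec\overline k\arr\cG_{(X,p,\xi)}$ to $\cX_{(X,p,\xi)}$, that is invariant under the relevant groupoid.

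First I would make this precise: the section $p\colon\spec\overline k\arr X$, being fixed by $G\eqdef\underaut_{\overline k}\xi$, is $G$-equivariant for the trivial action on $\spec\overline k$, so it induces $\spec\overline k=[\spec\overline k/G]\arr[X/G]=\cX_{(X,p,\xi),\overline k}$. Composing with $\cX_{(X,p,\xi),\overline k}\arr\bX_{(X,p,\xi),\overline k}=X/G$ gives a $\overline k$-point of $X/G$, namely the image $[p]$ of $p$. Next I would descend this to the field of moduli. By Lemma~\ref{lem:closed-neutral} (or directly from the cartesian diagram in Proposition~\ref{prop:fppf-gerbe}) the gerbe $\cG_{(X,p,\xi)}$ becomes neutral after a finite extension; more simply, the point $[p]\in(X/G)(\overline k)$ is a $\overline k$-point of the algebraic space $\bX_{(X,p,\xi)}$ over $k(X,p,\xi)$, and I must check it descends. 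The mechanism is: $\spec\overline k\arr\cG_{(X,p,\xi)}$ is an fppf cover with $\spec\overline k\times_{\cG_{(X,p,\xi)}}\spec\overline k=\underisom_{\overline k\otimes_k\overline k}(\pr_1^*\xi,\pr_2^*\xi)=:R$, and for every $r\in R$ (an isomorphism $\pr_1^*(X,p,\xi)\simeq\pr_2^*(X,p,\xi)$) the underlying isomorphism of algebraic spaces carries $\pr_1^*p$ to $\pr_2^*p$, \emph{because the isomorphism respects the marked-point structure}. Hence the two pullbacks of $[p]\in(X/G)(\overline k)$ along $\pr_1,\pr_2\colon R\arr\spec\overline k$ agree, i.e. $[p]$ is a descent datum relative to the cover $\spec\overline k\arr\cG_{(X,p,\xi)}$, and since $\bX_{(X,p,\xi)}$ is (fppf-locally, hence by descent) the moduli space, $[p]$ descends to a point $\bfp\in\bX_{(X,p,\xi)}(k(X,p,\xi))$.

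The step I expect to be the main obstacle is not any hard geometry but making the "descent datum" argument airtight at the level of stacks rather than sets: one must phrase the argument so that the descent is happening for a point of the \emph{coarse space} $\bX_{(X,p,\xi)}$, which is only fppf-locally isomorphic to $X/\underaut\xi$, and check that the two pullbacks of the universal section along the groupoid $R\double\spec\overline k$ literally coincide (not just up to $G$), using that sections are part of the structure and $\cM\arr\pas$ is faithful cartesian. Concretely, I would invoke that $\cX_{(X,p,\xi)}\arr\cM$ is the universal family and that the section $S\arr X$ is itself an object of the category of structured spaces $\cX$, so the residual gerbe $\cG_{(X,p,\xi,p)}$ of the pointed object inside $\cX$ already has a tautological object; its coarse space is a finite scheme over $k(X,p,\xi)$ containing a $\overline k$-point that, by the same fixed-point reasoning as in the proof of Theorem~\ref{thm:moduli} (the section lands in the open locus where $G$ acts freely is \emph{false} here, but the section being $G$-fixed is precisely what we use instead), descends. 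Finally, the compatibility $\bfp_{\overline k}=[p]$ under $\bX_{(X,p,\xi),\overline k}=X/\underaut(X,p,\xi)$ is immediate from the construction, since we produced $\bfp$ as the descent of $[p]$ itself. This gives the claimed rational point and completes the proof.
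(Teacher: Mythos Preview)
Your approach is correct in spirit and reaches the conclusion, but it is more hands-on than the paper's and contains a small muddle worth straightening out. The paper's proof is essentially a one-liner at the level of stacks: the faithful cartesian functor $\cM \arr \pas$ is precisely the datum of a section $\cM \arr \cX_{\rm sm}$ of the universal family (because $\cM \times_{\as} \pas \simeq \cX_{\rm sm}$); restricting this universal section to the substack $\cG_{(X,p,\xi)} \subseteq \cM$ gives a section $\cG_{(X,p,\xi)} \arr \cX_{(X,p,\xi)}$, and passing to coarse moduli spaces yields $\bfp \colon \spec k(X,p,\xi) \arr \bX_{(X,p,\xi)}$. No explicit descent is performed, because the section already lives over all of $\cM$.

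Your argument works over $\overline{k}$ first and then descends by hand; this amounts to reconstructing the restriction of that universal section to $\cG_{(X,p,\xi)}$ via the cover $\spec\overline{k} \arr \cG_{(X,p,\xi)}$. That is legitimate, but be careful with what is being descended along which cover. You write that $[p] \in (X/G)(\overline{k})$ gives a descent datum relative to $\spec\overline{k} \arr \cG_{(X,p,\xi)}$, yet $X/G = \bX_{(X,p,\xi),\overline{k}}$ is the pullback of $\bX_{(X,p,\xi)}$ along $\spec\overline{k} \arr \spec k(X,p,\xi)$, not along $\spec\overline{k} \arr \cG_{(X,p,\xi)}$. To make the argument clean you should either (i) descend the section $p\colon \spec\overline{k} \arr X = \cX_{(X,p,\xi),\overline{k}}$ along $\spec\overline{k} \arr \cG_{(X,p,\xi)}$ (using that $\cX_{(X,p,\xi)} \arr \cG_{(X,p,\xi)}$ is representable, hence a sheaf over $\cG$) and \emph{then} pass to coarse spaces, or (ii) descend $[p]$ along $\spec\overline{k} \arr \spec k(X,p,\xi)$ directly, observing that the natural fppf surjection $R \arr \spec(\overline{k}\otimes_{k(X,p,\xi)}\overline{k})$ lets you verify the sheaf condition on $R$. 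Either route works; the paper's formulation sidesteps this bookkeeping entirely by never leaving the universal level. The digression in your second paragraph about a further residual gerbe inside $\cX$ is unnecessary once either of these is in place.
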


\begin{proof}
	Consider the universal family $\cX \arr \cM$, and its smooth locus $\cX_{\rm sm} \subseteq \cX$ (this is the largest open substack of $\cX$ where the restriction of $\cX \arr \cM$ is smooth). The fibered product $\cM\times_{\as}\pas$ is canonically isomorphic to $\cX_{\rm sm}$; hence the cartesian functor $\cM \arr \pas$ induces a section $\cM \arr \cX_{\rm sm}$ of the projection $\cX_{\rm sm} \subseteq \cX \arr \cM$. By restricting to $\cG_{(X, p, \xi)}$ we obtain a section $\cG_{(X, p, \xi)} \arr \cX_{(X, p, \xi)}$ of the projection $\cX_{(X, p, \xi)} \arr \cG_{(X, p, \xi)}$, and, passing to moduli spaces, a section $\spec k(X, p, \xi) \arr \bX_{(X, p, \xi)}$ of the projection $\bX_{(X, p, \xi)} \arr \spec k(X, p, \xi)$, or, in other words, a $k(X, p, \xi)$-rational point of $\bX_{(X, p, \xi)}$.
\end{proof}



When $X$ is $1$-dimensional, it follows that $\bfp\in\bX_{(X, p, \xi)}(k(X, p, \xi))$ is smooth. Hence we get the following.

\begin{corollary}\label{cor:moduli-dim1}
Let $\cM$ be a category of pointed structured spaces, $(X,p,\xi)$ a tame object of $\cM(\overline{k})$, such that $X$ is $1$-dimensional and integral. Then $(X, p,\xi)$ is defined over its field of moduli.
\end{corollary}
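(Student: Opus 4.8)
The plan is to derive the corollary from Lemma~\ref{lem:pointed} and Theorem~\ref{thm:moduli}, the only genuinely new ingredient being a one‑dimensional local computation. Write $K = k(X,p,\xi)$ and $G = \underaut_{\overline{k}}(X,p,\xi)$. By Lemma~\ref{lem:pointed} the compression $\bX_{(X,p,\xi)}$ already carries a $K$‑rational point $\bfp$ whose base change $\bfp_{\overline{k}}$ is the image $[p]$ of $p$ under $X \to X/G$; since $X$ is integral, $\bX_{(X,p,\xi),\overline{k}} = X/G$ is integral, so $\bX_{(X,p,\xi)}$ is an integral algebraic space of finite type over $K$. Hence Theorem~\ref{thm:moduli}, applied with $Y = \bX_{(X,p,\xi)}$ and the identity as the dominant rational map, yields the conclusion as soon as $\bfp$ is shown to be a \emph{regular} point of $\bX_{(X,p,\xi)}$.

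First I would reduce the regularity of $\bfp$ to a statement over $\overline{k}$: since $\spec \overline{k} \to \spec K$ is faithfully flat, $\cO_{\bX_{(X,p,\xi)},\bfp}$ is regular provided its faithfully flat local extension $\cO_{X/G,[p]}$ is regular, so it suffices to prove that $[p]$ is a regular point of $X/G$. To see this I would pass to completed local rings. As $(X,p)$ is a pointed space, $p$ lies in the smooth locus of $X$, so $X$ is a smooth curve near $p$ and $\widehat{\cO}_{X,p} \simeq \overline{k}[[t]]$; moreover $p$ is the only point of $X$ lying over $[p]$, and, $[X/G]$ being tame, the formation of its coarse space commutes with the flat base change given by completion, whence $\widehat{\cO}_{X/G,[p]} = \overline{k}[[t]]^{G}$.

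It then remains to check that $\overline{k}[[t]]^{G}$ is regular. The group scheme $G$ is finite and linearly reductive and fixes the maximal ideal $\frm = (t)$; linear reductivity makes the surjection $\frm \to \frm/\frm^{2}$ of $G$‑modules split, so after a $G$‑equivariant change of coordinate $G$ acts on $\overline{k}[[t]]$ through a character $\chi\colon G \to \gm$. Its scheme‑theoretic image is a subgroup scheme $\mmu_{n} \subseteq \gm$ for some $n \geq 1$, and then $\overline{k}[[t]]^{G} = \overline{k}[[t]]^{\mmu_{n}} = \overline{k}[[t^{n}]]$ is a power series ring, hence regular. I expect this last local analysis — the reduction of a tame action on a smooth curve to a diagonal one, together with the compatibility of coarse spaces with completion at the fixed point — to be the only step requiring real care; the rest is assembling results already proved above. (One could equally well argue that $X/G$ is normal near $[p]$, being a tame quotient of a smooth scheme, and that a one‑dimensional normal algebraic space of finite type over a field is regular.)
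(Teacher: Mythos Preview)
Your proof is correct and follows the same line as the paper, which simply remarks that in dimension~$1$ the point $\bfp$ of the compression given by Lemma~\ref{lem:pointed} is smooth and then invokes Theorem~\ref{thm:moduli}. You have supplied the details the paper omits; your parenthetical normality argument is in fact the cleanest way to see this.
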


As a consequence, we recover the following result by Dèbes--Emsalem.

\begin{corollary}[{\cite[Corollary 5.4]{debes-emsalem}}]
Let $g\ge 1$ and $n\ge 1$ be positive integers and $k$ a field. Every smooth $n$-pointed curve of genus $g$ over $\overline{k}$ with tame automorphism group scheme is defined over its field of moduli.


In particular, if $\cha k=0$, $K/k$ is any extension and $\rM_{g,n}$ is the coarse moduli space of smooth $n$-pointed curves of genus~$g$ over $k$, every $K$-valued point of $\rM_{g,n}$ comes from a smooth $n$-pointed curve of genus~$g$ over $K$.
\end{corollary}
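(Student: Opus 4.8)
The plan is to deduce the corollary directly from Corollary~\ref{cor:moduli-dim1}, which already handles pointed structured spaces whose underlying space is $1$-dimensional and integral; the only real content here is to set up the right category of structured spaces and to handle the characteristic-zero moduli-space statement. First I would take $\cM$ to be the stack $\cM_{g,n}$ of smooth $n$-pointed curves of genus $g$ over $k$. By the example list following the definition of a category of structured spaces, $\cM_{g,n}$ is a locally finitely presented fppf stack equipped with a faithful cartesian functor to $\as$, and since $n\ge 1$ it carries a distinguished smooth section, so it is in fact a category of \emph{pointed} structured spaces in the sense of \S\ref{ssec:pointed}: the marked point $p_1$ provides the required section $\cM_{g,n}\arr\pas$. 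Now let $(C,p_1,\dots,p_n)$ be a smooth $n$-pointed curve of genus $g$ over $\overline{k}$ with tame automorphism group scheme. This gives an object $\xi\in\cM_{g,n}(\overline{k})$ whose automorphism group scheme $\underaut_{\overline{k}}\xi=\aut(C,p_1,\dots,p_n)$ is by hypothesis finite and linearly reductive; moreover $\xi$ is algebraic because $\cM_{g,n}$ is an algebraic stack with finite inertia, so by Proposition~\ref{prop:field-moduli-space} the object $\xi$ is tame in our sense, and its residual gerbe is the residual gerbe of the corresponding point of $\rM_{g,n}$.

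Since a smooth curve is geometrically integral, the underlying structured space $C$ is $1$-dimensional and integral over $\overline{k}$, so Corollary~\ref{cor:moduli-dim1} applies verbatim and yields that $(C,p_1,\dots,p_n)$ is defined over its field of moduli $k(\xi)$. This proves the first assertion. For the second, assume $\cha k=0$, let $K/k$ be any extension, and let $x\in\rM_{g,n}(K)$. Choose an algebraically closed field $\Omega$ containing $K$; the composite $\spec\Omega\arr\spec K\arr\rM_{g,n}$ comes, by definition of the coarse moduli space, from a smooth $n$-pointed curve over $\Omega$, which is automatically tame since $\cha k=0$. Let $m\in\rM_{g,n}$ be the image point and $k(m)$ its residue field; by Proposition~\ref{prop:field-moduli-space} the field of moduli of this object is $k(m)$, and by the first part the object descends to a smooth $n$-pointed curve over $k(m)$. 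Since $x\in\rM_{g,n}(K)$ factors through $\spec k(m)$ — i.e.\ there is a $k$-morphism $k(m)\arr K$ — base-changing this curve along $k(m)\arr K$ produces a smooth $n$-pointed curve of genus $g$ over $K$ inducing $x$, as desired.

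The only delicate point is the passage between ``point of $\rM_{g,n}$'' and ``object over the residue field'' in the last paragraph, and in particular checking that the curve constructed over $k(m)$ really induces the given $K$-point $x$ rather than merely some $\overline{K}$-point above it; this is exactly what Proposition~\ref{prop:field-moduli-space} is designed to give, since it identifies $k(\xi)$ with $k(m)$ and the residual gerbe $\cG_\xi$ with $\cG_m$, so that a section of $\cG_m$ over $k(m)$ — equivalently, a smooth $n$-pointed curve over $k(m)$ — maps under $\cM_{g,n}\arr\rM_{g,n}$ to the canonical $k(m)$-point of $\rM_{g,n}$ underlying $m$, and composing with $k(m)\arr K$ recovers $x$. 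I do not anticipate any genuine obstacle beyond bookkeeping: everything reduces to Corollary~\ref{cor:moduli-dim1} together with the dictionary between tame objects and tame points established in Proposition~\ref{prop:field-moduli-space}.
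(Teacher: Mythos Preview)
Your approach is correct and matches the paper's: set up $\cM_{g,n}$ as a category of pointed structured spaces (using the first marked point), invoke Corollary~\ref{cor:moduli-dim1} for the first assertion, and then use Proposition~\ref{prop:field-moduli-space} to handle the second. The first part is identical to the paper's.

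For the second part there is a small but genuine gap in your bookkeeping. You keep the base field $k$ and choose an algebraically closed $\Omega \supseteq K$, then invoke Proposition~\ref{prop:field-moduli-space} for the curve over $\Omega$. But the paper's formalism for residual gerbes and fields of moduli in \S\ref{sec:field-moduli} is only set up for objects over an \emph{algebraic} extension of the base field; when $K/k$ is transcendental, neither $\Omega$ nor $k(m)$ is algebraic over $k$, so Proposition~\ref{prop:field-moduli-space} does not literally apply to your object. The paper avoids this by silently changing the base field to $K$: it takes the curve over $\overline{K}$, regards $\cM_{g,n}$ and its coarse space over $K$, and observes that the given $K$-point then has residue field $K$, so Proposition~\ref{prop:field-moduli-space} yields field of moduli equal to $K$ directly---no detour through $k(m)$ and no subsequent base change needed. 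Your route through $k(m)$ can also be made to work, but you would have to change the base field to $k(m)$ and take the lift over $\overline{k(m)}$ rather than over $\Omega$; as written, the invocation of Proposition~\ref{prop:field-moduli-space} is outside its stated hypotheses.
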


\begin{proof}
Let $\cM_{g,n}$ be the stack of $n$-pointed curves of genus~$g$ over $k$, since $n\ge 1$ we may think of it as a category of pointed structured spaces, the first part then follows from Corollary~\ref{cor:moduli-dim1}. Now assume that $\cha k=0$, let $K/k$ be any extension and $\spec K\to \rM_{g,n}$ a point. Let $X$ be an $n$-pointed smooth curve over $\overline{K}$ corresponding to the composite $\spec \overline{K} \arr \spec K \arr \rM_{g,n}$; by Proposition~\ref{prop:field-moduli-space} we have that the field of moduli of $X$ is $K$, hence $X$ is defined over $K$.
\end{proof}

This fails for stable curves that are not irreducible: one can give examples of $1$-pointed stable curves that are not defined over their field of moduli. The issue here is that the automorphism group of the pointed curve may not act faithfully on the component containing the marked point.

Corollary~\ref{cor:moduli-dim1} fails in dimension higher than $1$. There are many counterexamples. For example, fix a positive integer~$g$, let $\rA_{g}$ the moduli space of principally polarized $g$-dimensional abelian varieties over $\CC$, and let $k$ be its field of rational functions. Call $X$ the corresponding abelian variety over the algebraic closure $\overline{k}$, which we can think of as $1$-pointed variety $(X, 0)$. By Proposition~\ref{prop:field-moduli-space} the field of moduli of $(X,0)$ is $k$. As we mentioned in the introduction, G.~Shimura showed in \cite{shimura-field-rationality-abelian} that when $k  = \CC$ $(X,0)$ is defined over $k$ if and only if $g$ is odd (see \cite[Appendix]{brosnan-reichstein-vistoli3} for a refinement of this statement due to Najmuddin Fakhruddin). In this case the group of automorphisms is cyclic of order~$2$.

Given a positive integer $d$, we will study a natural class of discrete finite groups over $k$, with the property that if $(X,\xi)$ is a tame object of $\cM$, and $\aut_{\overline{k}}\xi$ is in this class, then $\xi$ is defined over its field of moduli.

\section{The arithmetic of tame quotient singularities}\label{sec:arithmetic}

Let $S$ be an algebraic space of finite type over a field $k$. We say that $S$ \emph{has tame quotient singularities} if there is an étale cover $\{S_{i} \arr S\}$ and, for each $i$, a smooth algebraic space $U_{i}$ and a finite group $G_{i}$ of order not divisible by $\cha k$ acting on $U_{i}$, such that $S_{i}$ is isomorphic to $U_{i}/G$. In particular, $S$ is normal.

More generally, one could consider spaces that are étale-locally quotients of smooth algebraic spaces by finite linearly reductive group schemes, as in \cite{satriano-chevalley-shephard-todd}; but the technology to adequately deal with these in our context still does not seem to be completely in place, which forces us to limit ourselves to considering tame Deligne--Mumford stacks, as opposed to general tame stacks.

\subsection{Minimal stacks} The following is known, see \cite[Proposition 2.8]{angelo-intersection}. Our statement is slightly different from the one in the reference, though, so we give details.

\begin{proposition}\label{prop:min-stack}
The moduli space of a smooth tame Deligne--Mumford stack with finite inertia over $k$ has tame quotient singularities.

Conversely, if $S$ is an algebraic space with finite quotient singularities, there exists a smooth tame Deligne--Mumford stack with finite inertia $\widehat{S}$ with moduli space $S$, with the property that the morphism $\widehat{S} \arr S$ is an isomorphism over the smooth locus of $S$.

Furthermore, if $V$ is a smooth integral Deligne--Mumford stack with a dominant morphism $V\to \widehat{S}$, there exists a factorization $V\to\widehat{S}\to S$, unique up to a unique isomorphism. In particular, $\widehat{S}$ is unique, up to a unique isomorphism.
\end{proposition}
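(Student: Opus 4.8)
The plan is to construct $\widehat{S}$ by descent from the local quotient presentations, then establish the universal property, which gives uniqueness for free. First I would handle the \emph{first assertion}: if $\cS$ is a smooth tame Deligne--Mumford stack with finite inertia and moduli space $S$, then \'etale-locally on $S$ the morphism $\cS \arr S$ is of the form $[U/G] \arr U/G$ with $U$ smooth and $G$ finite of order prime to $\cha k$ (this is the local structure of tame stacks \cite[Theorem 3.2]{dan-olsson-vistoli1}, which for Deligne--Mumford stacks reduces to the \'etale slice theorem); so $S$ has tame quotient singularities by definition. Also $S$ is normal since it is \'etale-locally a quotient of a smooth (hence normal) space by a finite group, and quotients of normal spaces by finite groups are normal.

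For the \emph{converse}, I would proceed as follows. Cover $S$ by an \'etale atlas $\{S_i \arr S\}$ with $S_i \cong U_i/G_i$, $U_i$ smooth, $|G_i|$ prime to $\cha k$, and set $\cS_i \eqdef [U_i/G_i]$, a smooth tame Deligne--Mumford stack with moduli space $S_i$. The key step is a \emph{gluing lemma}: on the overlaps $S_{ij} = S_i \times_S S_j$, the two restrictions $\cS_i\times_{S_i}S_{ij}$ and $\cS_j\times_{S_j}S_{ij}$ are canonically isomorphic over $S_{ij}$. This canonicity is the technical heart of the argument and is precisely what makes the glued stack unique; it will follow from the \emph{last assertion} of the proposition applied \'etale-locally, so I would in fact prove the universal property first, at the level of local charts. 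Concretely: given a smooth integral Deligne--Mumford stack $V$ with a dominant morphism $V \arr [U/G]$ (with $U$ smooth affine, $G$ finite, order prime to $\cha k$), I claim there is a unique factorization $V \arr [U/G] \arr U/G$. Uniqueness is clear because $V$ is reduced and $[U/G] \arr U/G$ is a coarse moduli space, so $\hom(V,[U/G]) \arr \hom(V, U/G)$ is injective on isomorphism classes once one knows $V$ maps to the stack at all; existence of the lift is the content --- one forms the fiber product $W \eqdef V\times_{U/G}[U/G]$, which is a gerbe-like object over $V$, observes it is finite \'etale over the smooth locus and reflexive/normal over all of $V$ by purity (here smoothness of $V$ and tameness, i.e. $|G|$ invertible, are both used: purity of the branch locus plus Abhyankar-type tameness of the ramification), and extracts the canonical section. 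Then for two local charts $[U_i/G_i]$, $[U_j/G_j]$ with the same moduli space over an \'etale $S_{ij}$, applying this to $V = \cS_i\times_{S_i}S_{ij}$ mapping to $\cS_j\times_{S_j}S_{ij}$ (using that both are smooth integral Deligne--Mumford stacks with the dominant morphism coming from the common moduli space) produces the required canonical isomorphism, and the cocycle condition on triple overlaps is automatic by the uniqueness clause. Descent then yields $\widehat{S}$, smooth tame Deligne--Mumford with finite inertia, with moduli space $S$ and isomorphic to $S$ over $S_{\rm sm}$ (since where $S$ is smooth one may take $U_i = S_i$, $G_i$ trivial).

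Finally, the global universal property follows by gluing the local one: given $V \arr \widehat{S}$ dominant with $V$ smooth integral Deligne--Mumford, compose with $\widehat{S}\arr S$ to get $V\arr S$; this is the factorization, and its uniqueness follows because $\widehat{S}\arr S$ is a coarse space and $V$ is reduced, so any two factorizations agree \'etale-locally and hence globally by the local uniqueness already established. Uniqueness of $\widehat{S}$ itself then follows by the standard Yoneda-type argument: if $\widehat{S}'$ is another such stack, the universal property of $\widehat{S}$ applied to the smooth integral stack $\widehat{S}'$ (with its dominant map to $S$, which lifts to $\widehat{S}$ by the converse direction run for $\widehat{S}'$ --- or more directly, one produces mutually inverse morphisms $\widehat{S}\arr\widehat{S}'$ and $\widehat{S}'\arr\widehat{S}$ over $S$ using the respective universal properties) gives a unique isomorphism. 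The main obstacle I anticipate is establishing the \emph{canonicity} of the lift $V \arr [U/G]$ of $V \arr U/G$: one must show not merely that a lift exists (which could a priori depend on choices) but that it is unique, and this requires the tameness hypothesis in an essential way --- over a smooth base the lift is determined by its restriction to the complement of a codimension-$\geq 2$ locus by normality, and there it is determined by the tameness of the cover, but assembling this cleanly, and making sure the morphism-level (not just isomorphism-class-level) statement holds so that the descent datum is strict, is where the care is needed.
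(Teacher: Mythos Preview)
Your overall strategy matches the paper's: both prove the first assertion by \'etale-local reduction to $[U/G]$, and both establish the universal property via a fiber-product-plus-purity argument. The paper defers the actual construction of $\widehat{S}$ to \cite[Proposition~2.8]{angelo-intersection} rather than gluing explicitly, but your gluing approach is in the same spirit as what that reference does.

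There are, however, two genuine gaps. First, your uniqueness argument is wrong as stated: the map $\hom(V,[U/G]) \arr \hom(V,U/G)$ is \emph{not} injective on isomorphism classes merely because $U/G$ is a coarse moduli space (take $V$ a point mapping to a stacky point). The correct argument, which the paper gives in one line, is that any two lifts agree over the dense open where $\widehat{S} \arr S$ is an isomorphism, and then agree everywhere because $\widehat{S}$ is separated and $V$ is integral. Second, and more seriously for your construction, your gluing step tacitly assumes that the local charts $\cS_i = [U_i/G_i]$ already satisfy the ``minimal'' condition of being an isomorphism over the smooth locus. Without this the local universal property you invoke is simply false: $[\AA^1/\mmu_2]$ and $\AA^1$ both have coarse space $\AA^1$, yet there is no lift of $\id_{\AA^1}$ to $[\AA^1/\mmu_2]$, so these two charts cannot be glued. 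You must first replace each $[U_i/G_i]$ by $[(U_i/P_i)/(G_i/P_i)]$, where $P_i \subseteq G_i$ is the subgroup generated by pseudoreflections (Chevalley--Shephard--Todd); only then is the chart an isomorphism over the smooth locus, and only then does your universal property hold and the gluing go through.

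For the existence of the lift itself, the paper's argument is sharper than your ``gerbe-like object with a canonical section'' sketch: take the \emph{normalization} $U$ of $V \times_S \widehat{S}$; then $U \arr V$ is proper and birational (because $\widehat{S} \arr S$ is an isomorphism in codimension~$1$, which is exactly the minimality hypothesis), and \'etale by purity of the branch locus, hence an isomorphism --- giving the lift $V \simeq U \arr \widehat{S}$ directly.
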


\begin{proof}
	Let $\cX$ be a smooth, tame Deligne--Mumford stack with finite inertia, and let $M$ be its moduli space, we want to show that $M$ has tame quotient singularities. By \cite[Lemma~2.2.3]{dan-vistoli02}, we may assume that $\cX = [U/G]$, where $G$ is a finite group, so that $M = X/G$. If $u_{0}\colon \spec \Omega \arr U$ is a geometric point of $U$, and $G_{u_{0}}$ is the stabilizer of $u_{0}$, then the natural morphism $U/G_{u_{0}} \arr U/G$ is étale in neighborhood of $u_{0}$; but $G_{u_{0}}$ has order prime to $\cha k$, because $[U/G]$ is tame, so the result follows.
	
	If $S$ has tame quotient singularities, then the second half of the proof of \cite[Proposition 2.8]{angelo-intersection} shows the existence of a stack $\widehat{S}\to S$ as in the statement (the reference's assumption $\cha k=0$ is not used in the relevant part of the proof).
	
	Now let $V\to\widehat{S}$ be as in the statement. Since $S$ is normal, $\widehat{S}\to S$ is an isomorphism in codimension $1$. Let $U$ be the normalization of $V\times_{S}\widehat{S}$, since everything is of finite type over $k$ and $S$ is the moduli space of $\widehat{S}$, then $U\to V$ is proper and birational. By purity of branch locus, it is étale too, hence $U\simeq V$ and we obtain the desired morphism $V\to\widehat{S}$. The uniqueness follows from the fact that $\widehat{S}$ is separated.
\end{proof}

We call the stack $\widehat{S}$ above the \emph{minimal stack} of $S$ (also called the \emph{canonical stack} in the literature). Clearly, if $S$ has tame quotient singularities, and $k'$ is an extension of $k$, the space $S_{k'}$ obtained by base change also has tame quotient singularities, and the minimal stack of $S_{k'}$ is $\widehat{S}_{k'}$. In other words, formation of the minimal stack commutes with extensions of the base field. Furthermore, if $S' \arr S$ is an étale morphism and $S$ has tame quotient singularities, then so does $S'$, and $\widehat{S'} = S'\times_{S}\widehat{S}$.

It is known that algebraic spaces with tame quotient singularities have a resolution of singularities, that is, there is a proper birational morphism $\widetilde{S} \arr S$ where $\widetilde{S}$ is a smooth algebraic space over $k$. If $k$ is perfect, this is \cite[Theorem~E]{bergh-rydh-functorial-destackification}; in the general case it is obtained by applying \cite[Theorem~B]{bergh-rydh-functorial-destackification} to the minimal stack $\widehat{S} \arr S$.

\subsection{Singularities and fundamental gerbes} Let $k$ be a field; a \emph{tame quotient singularity} over $k$ is a pair $(S, s)$, where $S$ is a an integral scheme of finite type over $k$ with tame quotient singularities and $s \in S(k)$ is a $k$-rational point. No other kinds of singularities will appear in this paper, so from now on a tame quotient singularity will be called simply \emph{a singularity}.

Two singularities $(S, s)$ and $(S', s')$ are \emph{equivalent} if there exists a singularity $(S'', s'')$, together with étale maps $S'' \arr S$ and $S'' \arr S'$ sending $s''$ into $s$ and $s'$ respectively. This is true if and only if the complete local $k$-algebras $\widehat{\cO}_{S,s}$ and $\widehat{\cO}_{S',s'}$ are isomorphic \cite[Corollary~2.6]{artin-approximation}.

\begin{definition}
	Given a singularity $(S,s)$, the \emph{fundamental gerbe} $\cG_{(S,s)}$ of $(S,s)$ is the residual gerbe, as in Definition~\ref{def:residual-gerbe-point}, of $\widehat{S}\to S$ at $s$. The \emph{fundamental group} $G_{(S,s)}$ of $(S,s)$ is the automorphism group of any geometric point of $\cG_{(S,s)}$.
\end{definition}

Thus, by definition, the fundamental group of $(S,s)$ is a finite group, or order prime to $\cha k$. It is well defined up to a non-canonical isomorphism. 

One can prove that $\cG_{(S,s)}$ is the local fundamental gerbe of $S$, in the following sense. Let $S' \eqdef \spec\cO_{S,s}^{\rm h}$, where $\spec\cO_{S,s}^{\rm h}$ is the henselization of $\cO_{S,s}$, and let $U \subseteq S'$ be the smooth locus of $S$. Set $\widehat{S'} \eqdef S' \times_{S}\widehat{S}$. Then $\cG_{(S,s)}$ is the fundamental gerbe, in the sense of \cite{borne-vistoli} of $U$ and also of $\widehat{S'}$. We do not prove this here, as it is not needed in what follows.

Since formation of the minimal stack commutes with étale morphisms, equivalent singularities have isomorphic fundamental gerbes.

\begin{lemma}\label{lem:tqs-closed}
	Assume that $k$ is separably closed. Let $\cX$ be a smooth, tame Deligne--Mumford stack which is generically a scheme, with moduli space $\cX \arr S$. Let $\xi$ be an object in $\cX(k)$ and $s\in S(k)$ its image of $\xi$. 
	
	There exists a faithful representation $\aut\xi\subset\GL_{d}(k)$ such that $(S,s)$ is equivalent to $(\AA^{d}/\aut\xi,[0])$, and the quotient of $\aut\xi$ by the subgroup generated by pseudoreflexions is isomorphic to the fundamental group of $(S,s)$.
\end{lemma}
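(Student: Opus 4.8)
The plan is to reduce the statement to a completely local, linear-algebra picture by passing first to the henselization at $s$, then to a quotient presentation of the minimal stack, and finally to the tangent space at the fixed point. Since $k$ is separably closed and $\aut\xi$ has order prime to $\cha k$, the group is linearly reductive and all the relevant local models split equivariantly.

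First I would work locally: replace $S$ by $\spec\cO_{S,s}^{\mathrm h}$ and $\cX$ by its pullback, which does not change $(S,s)$ up to equivalence nor the fundamental gerbe, by the remarks on minimal stacks preceding the statement. Because $\cX$ is a smooth tame Deligne--Mumford stack which is generically a scheme and $k$ is separably closed, by \cite[Lemma~2.2.3]{dan-vistoli02} (used in the proof of Proposition~\ref{prop:min-stack}) we may present $\cX \simeq [U/\aut\xi]$ near $\xi$, with $U$ smooth affine over $k$, the point $\xi$ corresponding to a fixed $k$-point $u_0 \in U(k)$ with stabilizer all of $\aut\xi$, and $S = U/\aut\xi$ near $s$. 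Now I would linearize the action at $u_0$: since $\aut\xi$ is finite of order prime to $\cha k$, the action on the complete local ring $\widehat{\cO}_{U,u_0}$ is equivariantly linearizable (the standard averaging/Cartan argument over a field in which $|\aut\xi|$ is invertible, applied to a splitting of $\frm_{u_0}/\frm_{u_0}^2 \hookrightarrow \frm_{u_0}$), so $\widehat{\cO}_{U,u_0} \simeq k\ds{x_1,\dots,x_d}$ with $d = \dim S$ and $\aut\xi$ acting through its action on the cotangent space $\frm_{u_0}/\frm_{u_0}^2$. This gives a faithful linear representation $\aut\xi \subseteq \GL_d(k)$, faithful because the original action on $U$ is faithful on a dense open (as $\cX$ is generically a scheme) and $u_0$ is a fixed point, so an element acting trivially on $\widehat{\cO}_{U,u_0}$ acts trivially on $U$. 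Passing to quotients, $\widehat{\cO}_{S,s} \simeq (k\ds{x_1,\dots,x_d})^{\aut\xi}$, which is exactly the complete local ring of $\AA^d/\aut\xi$ at $[0]$; by \cite[Corollary~2.6]{artin-approximation} this isomorphism of complete local rings gives the asserted equivalence $(S,s) \sim (\AA^d/\aut\xi, [0])$.

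For the last clause I would identify the fundamental group. The minimal stack $\widehat{S}$ is characterized (Proposition~\ref{prop:min-stack}) by being a smooth tame Deligne--Mumford stack with moduli space $S$, isomorphic to $S$ over the smooth locus; applied to the local model, $\widehat{\AA^d/\aut\xi}$ is the canonical stack of the quotient, and its fundamental group at $[0]$ is $\aut\xi$ modulo the subgroup $H$ acting without fixing a divisor, i.e. modulo the normal subgroup generated by pseudoreflections — this is precisely the classical Chevalley--Shephard--Todd description of when $\AA^d/G \to \AA^d/G'$ is étale in codimension one, and it is exactly the computation of the residual gerbe at $[0]$. Concretely: the locus in $\AA^d$ where the stabilizer contains a pseudoreflection maps to the branch divisor of $\AA^d \to \AA^d/G$, the complement of the branch locus in $\AA^d/G$ is where $\widehat{S}\to S$ is an isomorphism, and the generic stabilizer of the action on the canonical-stack presentation is $G$ modulo pseudoreflections; this is the fundamental group by Definition of $\cG_{(S,s)}$.

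I expect the main obstacle to be the equivariant linearization step: one must be careful that the splitting $\frm_{u_0}/\frm_{u_0}^2 \hookrightarrow \widehat{\cO}_{U,u_0}$ can be chosen $\aut\xi$-equivariantly, which is where linear reductivity of $\aut\xi$ (order prime to $\cha k$) is genuinely used, and that the resulting isomorphism is compatible with taking invariants so that it descends to the claimed equivalence of singularities. The Chevalley--Shephard--Todd bookkeeping in the last paragraph is standard but needs the hypothesis that $\cX$ is generically a scheme to guarantee that $\aut\xi$ acts faithfully, so that "pseudoreflections" is the right notion and the generic stabilizer on the minimal stack is what is claimed.
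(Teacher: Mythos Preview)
Your proposal is correct and follows essentially the same route as the paper: local quotient presentation, linearization at the fixed point via Cartan's lemma, and Chevalley--Shephard--Todd to identify the fundamental group. Two small remarks: the paper reaches the presentation $[U/\aut\xi]$ in two explicit steps (first $[U/H]$ by the local structure theorem, then reducing $H$ to the stabilizer of a lift of $\xi$, using that $k$ is separably closed), and it produces an actual \'etale $\aut\xi$-equivariant map $U\to V$ rather than linearizing formally and invoking Artin approximation; also, your phrase ``acting without fixing a divisor'' is inverted---pseudoreflections are precisely the nontrivial elements that \emph{do} fix a hyperplane.
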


\begin{proof}
	After passing to an étale neighborhood of $s\in S$, we may assume $\cX\simeq [U/H]$ with $U$ smooth and $H$ finite of order prime to $\cha k$. Since $k$ is separably closed, the rational point $\xi\in \cX(k)$ lifts to a rational point $u\in U(k)$. Let $H_{u}\subset H$ be the stabilizer of $u$, then $U/H_{u}\to U/H\simeq S$ is étale in $[u]$, hence we may replace $\cX, H$ with $[U/H_{u}], H_{u}=\aut\xi$ and assume that $H=\aut\xi$ and that $u$ is a fixed point.
	
	Call $V$ the tangent space of $U$ at $u$; then $\aut\xi$ acts on $V$, and by fixing a basis we get a representation $\aut\xi \arr \GL_{d}(k)$. By Cartan's lemma, after passing to an equivariant étale neighborhood of $u$ in $U$ we may assume that there exists an étale $\aut\xi$-equivariant map $U \arr V$. This implies that the action of $\aut\xi$ on $V$ is faithful, and that $(S,s) = (U/\aut\xi, [u])$ is equivalent to $(V/\aut\xi, [0])$.
	
	Denote by $P\subset\aut\xi$ the subgroup generated by pseudoreflexions, by the Chevalley--Shephard--Todd theorem $V/P$ is smooth and $\aut\xi/P$ acts on it with with no fixed points in codimension $1$. It follows that $[(V/P)/(\aut\xi/P)]$ is the minimal stack of $V/\aut\xi$, and hence $\aut\xi/P$ is the fundamental group of $(V/\aut\xi, [0])\sim(S,s)$.
\end{proof}

\begin{corollary}\label{cor:tqs-rep}
	Assume that $k$ is separably closed, and let $(S,s)$ be a tame quotient singularity over $k$. There exists a faithful representation $G_{(S,s)}\subset\GL_{d}(k)$ with no pseudoreflexions such that $(S,s)\sim (\AA^{d}_{k}/G_{(S,s)},[0])$.
\end{corollary}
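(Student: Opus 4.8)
The plan is to deduce the Corollary from Lemma~\ref{lem:tqs-closed} by applying that lemma to the \emph{minimal stack} $\widehat S$ of $S$; the point is that choosing the minimal model forces the subgroup generated by pseudoreflexions produced by the lemma to be trivial, so that the representation it provides is exactly a faithful, pseudoreflexion-free linearization of $G_{(S,s)}$.

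First I would check that $\widehat S$ satisfies the hypotheses placed on $\cX$ in Lemma~\ref{lem:tqs-closed}. By Proposition~\ref{prop:min-stack}, $\widehat S$ is a smooth tame Deligne--Mumford stack with finite inertia whose moduli space is $S$, and $\widehat S \to S$ is an isomorphism over the smooth locus of $S$. Since $S$ is integral and normal, its smooth locus is dense, so $\widehat S$ is generically a scheme. Thus $\widehat S$ is an admissible choice of $\cX$.

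Next I would produce a $k$-point $\xi \in \widehat S(k)$ lying over $s$. The reduced fibre of $\widehat S \to S$ over $s$ is, by Definition~\ref{def:residual-gerbe-point} and the definition of the fundamental gerbe, the residual gerbe $\cG_{(S,s)}$, which is a finite tame gerbe over $k(s) = k$. Being tame and Deligne--Mumford, $\cG_{(S,s)}$ has unramified diagonal, so by Lemma~\ref{lem:closed-neutral} it is neutral over the separably closed field $k$; any section yields the desired $\xi$, whose image in $S$ is $s$. Moreover, since $\cG_{(S,s)}$ is Deligne--Mumford and $k$ is separably closed, $\underaut_{k}\xi$ is a constant finite group, and by construction it is the fundamental group $G_{(S,s)}$.

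Finally I would apply Lemma~\ref{lem:tqs-closed} to the pair $(\widehat S,\xi)$: it gives a faithful representation $\aut_{k}\xi \subseteq \GL_{d}(k)$ with $(S,s) \sim (\AA^{d}/\aut_{k}\xi, [0])$, together with an identification $\aut_{k}\xi / P \cong G_{(S,s)}$, where $P \subseteq \aut_{k}\xi$ is the subgroup generated by pseudoreflexions. Combining this with $\aut_{k}\xi = G_{(S,s)}$ from the previous step gives $\abs{\aut_{k}\xi / P} = \abs{\aut_{k}\xi}$, hence $P = \{1\}$. Therefore $G_{(S,s)} = \aut_{k}\xi$ acts faithfully on $\AA^{d}$ with no pseudoreflexions and $(S,s) \sim (\AA^{d}_{k}/G_{(S,s)}, [0])$, as claimed. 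The only delicate point is the choice $\cX = \widehat S$: for a general smooth tame Deligne--Mumford model of $S$ the group $\aut_{k}\xi$ may well contain pseudoreflexions, and it is precisely the minimality of $\widehat S$ — detected through the comparison $\aut_{k}\xi \cong \aut_{k}\xi/P \cong G_{(S,s)}$ — that excludes them. Beyond this, the argument is a repackaging of Lemma~\ref{lem:tqs-closed} together with the existence of rational points on neutral gerbes over separably closed fields.
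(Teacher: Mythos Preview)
Your proposal is correct and follows precisely the paper's approach: apply Lemma~\ref{lem:tqs-closed} to the minimal stack $\widehat S$, using Lemma~\ref{lem:closed-neutral} to obtain the needed $k$-point over $s$, and observe that for this choice of $\cX$ one has $\aut_{k}\xi = G_{(S,s)}$ so the pseudoreflexion subgroup is trivial. The paper compresses all of this into a single sentence, but your expanded version unpacks exactly the same reasoning.
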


\begin{proof}
	Thanks to Lemma~\ref{lem:closed-neutral}, we may apply Lemma~\ref{lem:tqs-closed} to $\widehat{S}$.
\end{proof}

\subsection{Liftable singularities}

The following is immediate from Theorem~\ref{LN}.

\begin{proposition}\label{prop:char-liftable}
Let $(S,s)$ be a tame quotient singularity, $\widetilde{S} \arr S$ a resolution of singularities. The following conditions are equivalent.

\begin{enumerate1}

\item $\widetilde{S}$ has a $k$-rational point over $s$.

\item The fundamental gerbe $\cG_{(S,s)}$ of $(S,s)$ is neutral.

\item The minimal stack $\widehat{S}\arr S$ has a $k$-rational point over $s$.

\item If $S' \arr S$ is a proper birational morphism, where $S'$ is an integral tame Deligne--Mumford stack, then $S'$ has a $k$-rational point over $s$.

\end{enumerate1}
\end{proposition}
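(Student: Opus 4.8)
The plan is to deduce all four equivalences from two ingredients: the defining property of the fundamental gerbe, and repeated applications of the Lang--Nishimura theorem for tame stacks (Theorem~\ref{LN}), with the base scheme in that theorem taken to be $S$ itself.

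First I would establish $(2)\iff(3)$ by pure formalism. By definition, $\cG_{(S,s)}$ is the residual gerbe of the minimal stack $\widehat S\to S$ at $s$, which by Definition~\ref{def:residual-gerbe-point} equals $(\spec k\times_S\widehat S)_{\mathrm{red}}$, using that $k(s)=k$ because $s$ is $k$-rational. A gerbe over $\spec k$ is neutral precisely when it has a $k$-point; and since $\spec k$ is reduced, a $k$-point of $(\spec k\times_S\widehat S)_{\mathrm{red}}$ is the same as a $k$-point of $\spec k\times_S\widehat S$, i.e.\ a $k$-point of $\widehat S$ lying over $s$. Hence $(2)\iff(3)$.

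The heart of the argument is the following observation, which is a direct application of Theorem~\ref{LN}: \emph{if $T_1\to S$ and $T_2\to S$ are proper morphisms of Deligne--Mumford stacks of finite type over $k$ which restrict to isomorphisms over a common dense open subscheme of $S$, with $T_1$ smooth and integral and $T_2$ tame, then a $k$-point of $T_1$ lying over $s$ gives rise to a $k$-point of $T_2$ lying over $s$.} Indeed, agreement over a dense open yields a rational map $T_1\dashrightarrow T_2$ over $S$; the given $k$-point of $T_1$ over $s$ is automatically a regular point, because $T_1$ is smooth; and $T_2$ is tame and proper over $S$, so Theorem~\ref{LN} lifts $s$ to a $k$-point of $T_2$. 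I would then feed into this observation the following facts, all available from earlier in the paper: $\widehat S\to S$ is proper (it has finite inertia with moduli space $S$) and $\widehat S$ is smooth, integral and tame; the resolution $\widetilde S\to S$ is proper and $\widetilde S$, being a smooth integral algebraic space, is in particular a smooth integral tame Deligne--Mumford stack; any $S'$ as in $(4)$ is tame and proper over $S$ by hypothesis; and $\widehat S$, $\widetilde S$ and $S'$ all restrict to an isomorphism over (a dense open inside) the smooth locus of $S$, so the required rational maps exist.

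With the observation in hand, I would close the circle of implications. Applying it with $(T_1,T_2)=(\widetilde S,\widehat S)$ gives $(1)\Rightarrow(3)$; with $(T_1,T_2)=(\widehat S,\widetilde S)$ it gives $(3)\Rightarrow(1)$; with $(T_1,T_2)=(\widehat S,S')$ it gives $(3)\Rightarrow(4)$; and $(4)\Rightarrow(1)$ is simply the instance $S'=\widetilde S$ of statement $(4)$. Combined with $(2)\iff(3)$, this shows that all four conditions are equivalent. I do not anticipate a genuine difficulty: essentially all the substance is packaged inside Theorem~\ref{LN} and the structure theory of the minimal stack (Proposition~\ref{prop:min-stack}), and the only points that need attention are the routine verifications that $\widehat S\to S$ and $\widetilde S\to S$ are proper and that $\widehat S$ and $\widetilde S$ carry the smoothness and tameness needed to play the roles of $T_1$ and $T_2$ — which is why the result can fairly be called ``immediate''.
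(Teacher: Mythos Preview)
Your proposal is correct and matches the paper's approach: the paper's entire proof is the single line ``immediate from Theorem~\ref{LN}'', and you have spelled out exactly how that immediacy works, by packaging the Lang--Nishimura input into a clean bidirectional observation and then cycling through the implications $(1)\Leftrightarrow(3)\Rightarrow(4)\Rightarrow(1)$ together with the formal $(2)\Leftrightarrow(3)$.
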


\begin{definition}
Let $(S,s)$ be a tame quotient singularity. We say that $(S, s)$ \emph{liftable} if it satisfies the equivalent conditions of Proposition~\ref{prop:char-liftable}.
\end{definition}

\begin{remarks}\hfil
\begin{enumerate1}

\item If two singularities are equivalent, then one is liftable if and only if the other is.

\item Since formation of $\widehat{S}$ commutes with extension of the base field, we see that if $(S,s)$ is liftable over $k$, and $k'$ is an extension of $k$, then $(S, s)_{k'}$ is also liftable.

\end{enumerate1}
\end{remarks}

The following construction gives a criterion for a singularity to be liftable, which will be a fundamental tool in the rest of the paper.

\subsection{The blowup construction} Let $K/k$ be a separable closure. Let $(S,s)$ be a tame quotient singularity over $k$; by Corollary~\ref{cor:tqs-rep} $(S_{K},s_{K})$ is equivalent to $(\AA^{d}/G,[0])$ for some $G\subset\GL_{d}(K)$, such that $G$ contains no pseudoreflexions and has order prime to $\cha k$. Denote by $\overline{G}$ the image of $G$ in $\PGL_{d}(K)$.

Denote by $\cG$ the fundamental gerbe $\cG_{(S,s)}$, and by $\cN$ the normal bundle of $\cG$ in $\widehat{S}$; this is a vector bundle over $\cG$ of rank~$d$. Also, denote by $\cB$ the blowup of $\widehat{S}$ along $\cG$; clearly the exceptional divisor $\cE$ equals $\PP(\cN)$. Denote by $E$ the moduli space of $\cE$; the morphism $\cE \arr E$ factors through the minimal stack $\widehat{E}$. 

We have $\cE_{K} = [\PP^{d-1}_{K}/G]$, $\cN_{K} = [\AA^{d}_{K}/G]$, and $E_{K} = \PP^{d-1}_{K}/\overline{G}$.

\begin{definition}
	We say that $E$ is the \emph{associated variety} of the singularity.
\end{definition}

Recall that $\cB$ is the blow up of $\cG$ in $\widehat{S}$, let $B\to S$ be its coarse moduli space. Since $\cB$ is a tame stack and formation of coarse moduli spaces commutes with base change for tame stacks, the reduced fiber of $B\to S$ over $s$ is $E$.

\begin{corollary}\label{cor:points-blowup}
	There exists a rational morphism $E\dashrightarrow\cG$.
\end{corollary}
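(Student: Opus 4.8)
The plan is to exhibit the rational map as a composition $E \dashrightarrow \cE \xrightarrow{\pi} \cG$, where $\pi\colon \cE = \PP(\cN) \to \cG$ is the projection and $E \dashrightarrow \cE$ is a rational section of the coarse-space morphism $\cE \to E$. So the whole problem reduces to producing a point of $\cE$ over the generic point $\eta = \spec k(E)$ of $E$. To set this up, recall that $E$ is the coarse space of $\overline{\cE} = \cE \thickslash \mmu_n$; since $\cE \to \overline{\cE}$ is a $\mmu_n$-gerbe, $E$ is also the coarse space of $\cE$. Over $K$ we have $\overline{\cE}_K \simeq [\PP^{d-1}_K/\overline G]$, and $\overline G$ acts generically freely on $\PP^{d-1}$: any element of $\overline G$ fixing $\PP^{d-1}$ pointwise lifts to a scalar matrix in $G$, hence lies in $H$ and is trivial in $\overline G = G/H$. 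Thus $\overline{\cE}$ has trivial stabilizer over a dense open of $E$, the coarse map $\overline{\cE}\to E$ is an isomorphism there, and therefore the generic fibre $\cE_\eta \eqdef \cE\times_E \spec k(E)$ is a $\mmu_n$-gerbe over the field $k(E)$.

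The crucial point is that $\cE_\eta$ is \emph{neutral}. I will use the following standard fact: a $\mmu_n$-gerbe $\cH$ over a field $F$ is neutral as soon as it carries a line bundle whose weight (the character by which the inertial $\mmu_n$ acts on its fibres) is prime to $n$, i.e.\ a generator of $X^*(\mmu_n) = \ZZ/n$. Indeed, from the Leray spectral sequence of $\cH \to \spec F$ and $\pic F = 0$ one gets an exact sequence $0 \to \pic\cH \to X^*(\mmu_n) \xrightarrow{e} \on{Br}F$ in which $e$ sends a generator to the image of $[\cH]$ in $\on{Br}F$; if a line bundle of weight prime to $n$ exists then a power of it has weight $1$, so $e$ vanishes, so $[\cH]$ dies in $\on{Br}F$, and since $\H^2(F,\mmu_n) \to \on{Br}F$ is injective (again $\pic F = 0$) we conclude $[\cH] = 0$. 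Now apply this to $\cE_\eta$ with the restriction of the tautological line bundle $\cO_\cE(1)$ of $\cE = \PP(\cN)$. By construction, the inertial $\mmu_n \subseteq \rI_{\cE_\eta}$ that is rigidified away to form $\overline{\cE}$ is the subgroup induced by $\mmu_n \subseteq \GG_m$ via the presentation $\cE = [(\cN\setminus\{0\})/\GG_m]$, and $\GG_m$ acts on $\cO_\cE(1)$ through a generator of $X^*(\GG_m)$; hence $\mmu_n$ acts on $\cO_\cE(1)\rest{\cE_\eta}$ through a generator of $X^*(\mmu_n)$, i.e.\ with weight $\pm 1$, which is prime to $n$. (If one prefers, the weight can be checked after base change to $K$, where $\cE_K \simeq [\PP^{d-1}_K/G]$, the generic stabilizer is the subgroup $H$ of scalar matrices, and a scalar $\zeta$ acts on the fibre of $\cO_{\PP^{d-1}}(1)$ over a line $\ell$ through $\ell^\vee$, i.e.\ by $\zeta^{-1}$.) Therefore $\cE_\eta(k(E)) \neq \emptyset$.

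A point of $\cE_\eta$ over $k(E)$ is a morphism $\spec k(E) \to \cE$ lying over the generic point of $E$; since $\cE$ is locally of finite presentation over $k$, this morphism extends to a morphism $U \to \cE$ for some dense open $U \subseteq E$, that is, a rational map $E \dashrightarrow \cE$ which is a rational section of $\cE \to E$. Composing with $\pi\colon \cE \to \cG$ yields the desired rational map $E \dashrightarrow \cG$.

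I expect the main obstacle to be the weight computation at the heart of the second step: correctly matching the abstract rigidifying subgroup $\mmu_n \subseteq \rI_\cE$ with the scalar stabilizer visible after base change to $K$, and verifying that the tautological bundle of $\PP(\cN)$ has weight prime to $n$ with respect to it. Once that is pinned down, the remaining ingredients — identifying the coarse spaces of $\cE$ and $\overline{\cE}$, the generic-freeness of the $\overline G$-action, the cohomological criterion for neutrality of a $\mmu_n$-gerbe over a field, and the spreading-out of the generic section — are routine.
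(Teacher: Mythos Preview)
Your argument is correct, but it takes a genuinely different route from the paper's.

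The paper's proof uses the ambient blowup $\cB = \mathrm{Bl}_{\cG}\widehat{S}$ and its coarse space $B$: since $\cB$ is smooth, $B$ is normal, hence regular at the codimension-one generic point of $E \subseteq B$; the Lang--Nishimura theorem for tame stacks (Theorem~\ref{LN}) then lifts $\spec k(E) \to B$ along the proper birational map $\cB \to B$, and the composite $\spec k(E) \to \cB \to \widehat{S}$ lands in $\cG$. So the paper's argument is a two-line application of its main tool.

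Your approach bypasses Lang--Nishimura entirely: you reduce to showing that the $\mmu_n$-gerbe $\cE_\eta$ over $k(E)$ is neutral, and you do this by observing that the tautological line bundle $\cO_{\PP(\cN)}(1)$ restricts to a line bundle of weight $\pm 1$ for the inertial $\mmu_n$, which forces the Brauer class (hence the $\mmu_n$-class, by injectivity of $\H^2(k(E),\mmu_n) \to \on{Br}k(E)$) to vanish. This is more hands-on but also more elementary --- it uses only the standard Leray/edge-map description of $\mmu_n$-gerbes and requires no stacky birational machinery. The paper's proof is shorter once Theorem~\ref{LN} is in hand and avoids the weight bookkeeping; yours would still go through in a context where Lang--Nishimura for stacks is not available.
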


\begin{proof}
	The morphism $\cB\to B$ is birational. Since $\cB$ is smooth, then $B$ is normal, in particular it is regular at the generic point of $E$. It follows that the morphism $\spec k(E)\to B$ lifts to a morphism $\spec k(E)\to\cB$ by our version of the Lang--Nishimura theorem. Clearly, the composite $\spec k(E)\to\cB\to\widehat{S}$  factors through $\cG\to\widehat{S}$.
\end{proof}

Given an irreducible algebraic space $X$ of finite type over a field $k$, we say that a field extension $k'/k$ \emph{splits} $X$ if there exists a dominant rational map $Y\dashrightarrow X_{k'}$ where $Y$ is an integral scheme of finite type over $k'$ with a smooth $k'$-rational point. If $k$ splits $X$, then we say that $X$ is split. If $X$ has tame quotient singularities, using our version of the Lang--Nishimura theorem we see that $k'$ splits $X$ if and only if the minimal stack $\widehat{X}$ has a $k'$-rational point.

\begin{proposition}\label{prop:var-split}
	A field extension $k'/k$ splits $E$ if and only if it splits $\cG$.
\end{proposition}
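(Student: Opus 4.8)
The plan is to deduce both implications from the blowup construction together with the characterization (in the paragraph preceding the statement) that, for an algebraic space with tame quotient singularities, a field extension splits it if and only if its minimal stack acquires a rational point over that extension. Throughout I may freely base change to the separable closure $K/k$ used in the blowup construction, since formation of minimal stacks commutes with base field extension and since, by the Lang--Nishimura theorem for tame stacks (Theorem~\ref{LN}), having a $k'$-rational point on a tame stack is detected after the splitting of the problem into the local model $(\AA^d/G,[0])$.

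First I would prove that if $k'/k$ splits $\cG$ then it splits $E$. By hypothesis $\cG$, being a tame gerbe over the field of moduli, acquires a $k'$-point; equivalently $\cG(k')\neq\emptyset$, i.e. $\cG$ is neutral over $k'$. Then the blow-up stack $\cB$ (of $\cG$ along $\widehat S$), being smooth and proper and tame, also has a $k'$-rational point — and more: since $\cB\to\cG$ restricts over the exceptional locus to $\cE\to\cG$, a $k'$-point of $\cG$ lifting into the exceptional divisor gives a $k'$-point of $\cE$, hence of $\overline\cE$, hence of $\widehat E$ via the lift $\overline\cE\to\widehat E$ constructed just before the Definition of the associated variety. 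By the splitting criterion applied to $E$, this shows $k'$ splits $E$. (Alternatively, one argues directly: the exceptional divisor of $\cB\to\cG$ is $\cE$, which is a $\PP^{d-1}$-bundle-type stack over $\cG$, so a $k'$-point of $\cG$ automatically produces one on $\cE$.)

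Conversely, suppose $k'/k$ splits $E$. Then by the splitting criterion $\widehat E(k')\neq\emptyset$, so $\overline\cE$ — which maps to $\widehat E$ and is a $\mmu_n$-gerbe over $\overline\cG$ with $\overline\cE\to\widehat E$ realizing it over the minimal stack — acquires a $k'$-point; tracing through, $\overline\cG(k')\neq\emptyset$. To upgrade this to $\cG(k')\neq\emptyset$ one uses Corollary~\ref{cor:points-blowup}: there is a rational map $E\dashrightarrow\cG$, so composing the given dominant rational map $Y\dashrightarrow E_{k'}$ (with $Y$ integral of finite type over $k'$ with a smooth $k'$-point) with $E_{k'}\dashrightarrow\cG_{k'}$ produces a rational map $Y\dashrightarrow\cG_{k'}$ from a space with a smooth $k'$-rational point into the tame proper stack $\cG_{k'}$; Theorem~\ref{LN} then lifts the $k'$-point of $Y$ to a $k'$-point of $\cG$, so $\cG$ is neutral over $k'$, i.e. $k'$ splits $\cG$.

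The main obstacle I anticipate is the forward direction's bookkeeping around the $\mmu_n$-rigidifications: one must be careful that "$\widehat E$ has a $k'$-point'' really propagates back through $\overline\cE\to\widehat E$ and the $\mmu_n$-gerbe structure to give a $k'$-point of $\cG$ itself, not merely of $\overline\cG$. Here the cleanest route is to avoid the rigidifications entirely and instead run everything through Corollary~\ref{cor:points-blowup}, which already packages the passage from a rational point of $E$ (at its generic point, via Lang--Nishimura applied to $B$) to a point of $\cG$; the only thing to check is that a splitting field $k'$ of $E$, i.e. a dominant rational map from a space with a smooth $k'$-point to $E_{k'}$, composes with $E\dashrightarrow\cG$ to again land in the regime where Theorem~\ref{LN} applies, which it does since $\cG$ is proper and tame. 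The reverse direction is then essentially formal from the exceptional-divisor description of $\cB\to\cG$.
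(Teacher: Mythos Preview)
Your proposal is correct and, once the abandoned detour through $\overline{\cE}$ is discarded, it is exactly the paper's argument: for the direction $\cG\Rightarrow E$ one lifts a $k'$-point of $\cG$ to $\cE$ (a projective bundle over $\cG$) and pushes it along $\cE\to\overline{\cE}\to\widehat{E}$, while for $E\Rightarrow\cG$ one composes the splitting datum with the rational map $E\dashrightarrow\cG$ of Corollary~\ref{cor:points-blowup} and applies Theorem~\ref{LN}. One minor slip: in your final paragraph the labels ``forward'' and ``reverse'' are swapped---the $\mmu_n$-rigidification issue you flag (lifting from $\widehat{E}$ back to $\cG$) belongs to the $E\Rightarrow\cG$ direction, and as you rightly conclude, routing through Corollary~\ref{cor:points-blowup} avoids it entirely.
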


\begin{proof}
	If $k'$ splits $\cG$, then there exists a morphism of $k$-stacks $\spec k' \arr \cG$; since $\cE$ is the projectivization of a vector bundle on $\cG$, this lifts to $\spec k' \arr \cE$; but $\cE$ maps to $\widehat{E}$, so $k'$ splits $E$.
	
If $k'$ splits $E$, since there exists a rational map $E\dashrightarrow\cG$ our version of the Lang--Nishimura theorem implies that $k'$ splits $\cG$.
\end{proof}

\subsection{$R$-singularities}

If $k$ and $k'$ are fields with the same characteristic, $(S,s)$ is a singularity over $k$ and $(S',s')$ is a singularity over $k'$, we say that $(S,s)$ and $(S',s')$ are \emph{stably equivalent} if there exists a common extension $k\subseteq K$ and $k'\subseteq K$, such that $(S,s)_{K}$ and $(S',s')_{K}$ are equivalent. It is easily checked that this is an equivalence relation on tame quotient singularities.

\begin{definition}
An \emph{$R$-singularity} is a tame quotient singularity, such that every singularity that is stably equivalent to it is liftable.
\end{definition}

From the definition, it is not clear that there are any non-trivial examples of $R$-singularities.

\subsection{$R_{d}$ groups}

\begin{definition}\label{def:Rd}

Let $d$ be a positive integer, $p$ be either $0$ or a prime, and $G$ be a finite group whose order is not divisible by $p$. We say that $G$ is an \emph{$R_{d}$ group in characteristic~$p$} if for every field $K$ of characteristic~$p$ and every faithful $d$-dimensional representation $G \subseteq \GL_{d}(K)$, the singularity $(\AA_{K}^{d}/G, [0])$ is an $R$-singularity.

If $G$ is $R_{d}$ in all characteristics not dividing the order of $G$, we say that $G$ is \emph{an $R_{d}$ group}, or simply that $G$ is $R_{d}$.
\end{definition}

Another way of stating this is the following. 

\begin{definition}\label{def:R-singularity}
Let $G$ be a finite group, $(S,s)$ a singularity over a field $k$ whose characteristic does not divide the order of $G$. We say that $(S,s)$ is a \emph{$G$-singularity} if there exists a field $K$ and a faithful $d$-dimensional representation $G \subseteq \GL_{d}(K)$ such that $(S,s)$ is stably equivalent to $\AA^{d}_{K}/G$.
\end{definition}

Then $G$ is an $R_{d}$ group if and only if every $G$-singularity is liftable.

\begin{remarks}\hfil
\begin{enumerate1}

\item As a point of terminology, we notice that if $G$ is not a subgroup of $\GL_{d}(K)$ for any field $K$ (for example, if $G$ contains an abelian subgroup of rank larger than~$d$), then it is vacuously an $R_{d}$ group. Such a group can not act faithfully on a $d$-dimensional variety with a smooth fixed point, so it will not actually appear in the statement of Theorem~\ref{thm:moduli2}. So, we are actually only interested in $R_{d}$ groups that are subgroups of some $\GL_{d}(K)$.

\item Every finite group is trivially $R_{1}$.

\item In the definition of an $R_{d}$ group, we may assume that $K$ is algebraically closed.

\end{enumerate1}
\end{remarks}

\begin{lemma}\label{lem:Rd}
Let $\cX$ be a geometrically integral tame Deligne--Mumford stack of dimension $d$ over a field $k$ with finite inertia and moduli space $\cX \arr M$. Assume that $\cX \arr M$ is a birational isomorphism. Let $\xi\in\cX(\overline{k})$ be a smooth geometric point with image $p\in M$.

If the automorphism group $\aut_{\overline{k}}\xi$ is an $R_{d}$ group in $\cha k$, then $(M, p)$ is an $R$-singularity. In particular, $p\in M$ lifts to a $k(p)$-rational point of $\cX$.
\end{lemma}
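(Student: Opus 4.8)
The plan is to reduce the statement about $(M,p)$ being an $R$-singularity to the hypothesis that $\aut_{\overline k}\xi$ is an $R_d$ group, using the local structure of $\cX$ near $\xi$ provided by Lemma~\ref{lem:tqs-closed} and Corollary~\ref{cor:tqs-rep}. First I would work over the separable closure $K$ of $k$ and apply Lemma~\ref{lem:tqs-closed} to $\cX_K$ at $\xi_K$: it yields a faithful representation $\aut_{\overline k}\xi\subseteq\GL_d(K)$ (note that $\aut_{\overline k}\xi=\aut_K\xi$ since the stack is Deligne--Mumford and $K$ separably closed, by Lemma~\ref{lem:closed-neutral}) and an equivalence of singularities $(M_K,p_K)\sim(\AA^d_K/\aut_{\overline k}\xi,[0])$. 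The key point is then that, by Definition~\ref{def:R-singularity}, $(M,p)$ is a $G$-singularity for $G=\aut_{\overline k}\xi$: indeed $(M_K,p_K)$ is equivalent, hence stably equivalent, to $\AA^d_K/G$, and this witnesses that $(M,p)$ itself is stably equivalent to $\AA^d_K/G$ over the common extension $K$.

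Once $(M,p)$ is identified as a $G$-singularity, the conclusion that it is an $R$-singularity is exactly the content of the hypothesis that $G$ is an $R_d$ group in characteristic $\cha k$, via the reformulation recorded right after Definition~\ref{def:R-singularity}: ``$G$ is $R_d$ if and only if every $G$-singularity is liftable,'' and being an $R$-singularity means every stably equivalent singularity is liftable — but every singularity stably equivalent to $(M,p)$ is again a $G$-singularity (stable equivalence is an equivalence relation and preserves the property of being $\AA^d_K/G$-like), so they are all liftable. Hence $(M,p)$ is an $R$-singularity. The final clause — that $p\in M$ lifts to a $k(p)$-rational point of $\cX$ — follows because $\cX\to M$ is a proper birational morphism from an integral tame Deligne--Mumford stack, so by Proposition~\ref{prop:char-liftable}(1)$\Leftrightarrow$(4) (applied over the base field $k(p)$, using that $(M,p)$ liftable over $k(p)$ is part of being an $R$-singularity) the point $p$ lifts.

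I would need to be a little careful about base fields: the statement is over $k$ with residue field $k(p)$ at the image point $p$, so strictly speaking $p\in M(k(p))$ and one applies the $R$-singularity formalism to $(M,p)$ as a singularity over $k(p)$. The geometric integrality hypothesis on $\cX$ guarantees $M$ is geometrically integral, so $(M,p)$ over $k(p)$ is a genuine tame quotient singularity in the sense of \S\ref{sec:arithmetic}, and the smoothness of $\xi$ ensures $p$ lies where we have the quotient-singularity description. Since $\aut_{\overline k}\xi$ is an $R_d$ group in $\cha k = \cha k(p)$, Definition~\ref{def:Rd} applies over $k(p)$ as well.

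The main obstacle is bookkeeping rather than mathematical depth: one must check that the equivalence of singularities produced by Lemma~\ref{lem:tqs-closed} over $K$ genuinely descends to a \emph{stable} equivalence statement over the relevant field (which is immediate from the definition of stable equivalence, since $K$ is a common extension), and that the representation dimension is exactly $d$ so that the $R_d$ hypothesis is the right one — this is where geometric integrality and $\dim\cX = d$ are used. No essentially new argument is required beyond assembling Lemma~\ref{lem:tqs-closed}, Corollary~\ref{cor:tqs-rep}, Proposition~\ref{prop:char-liftable}, and the definitions of $R$-singularity and $R_d$ group.
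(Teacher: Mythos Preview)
Your proposal is correct and follows essentially the same route as the paper, which records the proof as a one-line ``direct consequence of Lemma~\ref{lem:tqs-closed}''. You have simply unpacked that sentence: pass to the separable closure, apply Lemma~\ref{lem:tqs-closed} to get $(M_K,p_K)\sim(\AA^d_K/\aut_{\overline k}\xi,[0])$, observe that this makes $(M,p)$ a $G$-singularity for $G=\aut_{\overline k}\xi$, and invoke the $R_d$ hypothesis; the final lifting clause via Proposition~\ref{prop:char-liftable}(4) is the intended reading of ``in particular''.
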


\begin{proof}
This is a direct consequence of Lemma~\ref{lem:tqs-closed}.
\end{proof}

The point of this definition is the following result. Let us put ourselves in the situation of subsection~\ref{ssec:pointed}: $\cM \arr \aff$ is a category of pointed structured spaces, $(X, p, \xi)\in \cM(\overline{k})$ an algebraic object, as in Theorem~\ref{thm:moduli}. 

\begin{theorem}\label{thm:moduli2}
Let $\cM \arr \aff$ be a category of pointed structured spaces, $(X, p, \xi)\in \cM(\overline{k})$ an algebraic object with $X$ integral of dimension~$d$. Assume that the automorphism group scheme $\underaut_{\overline{k}}(X,p)$ is finite, tame and reduced. If $\underaut_{\overline{k}}(X,p, \xi)$ is an $R_{d}$ group, then $(X,p, \xi)$ is defined over its field of moduli.
\end{theorem}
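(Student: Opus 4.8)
The plan is to reduce Theorem~\ref{thm:moduli2} to Theorem~\ref{thm:moduli} by exhibiting a $k(X,p,\xi)$-variety $Y$ with a smooth rational point that dominates the compression $\bX_{(X,p,\xi)}$, and then to use the $R_d$ hypothesis to produce that smooth point. Write $K = k(X,p,\xi)$ for the field of moduli, $G = \underaut_{\overline{k}}(X,p)$, and $A = \underaut_{\overline{k}}(X,p,\xi) \subseteq G$; both are finite, tame, and reduced, hence ordinary finite groups of order prime to $\cha k$. By Lemma~\ref{lem:pointed} the compression $\bX_{(X,p,\xi)}$ carries a canonical $K$-rational point $\bfp$ whose base change to $\overline{k}$ is the image of $p$ in $X/A$. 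The point $\bfp$ is in general singular, so we cannot feed it directly into Theorem~\ref{thm:moduli}; instead I would take $Y$ to be a resolution of singularities of $\bX_{(X,p,\xi)}$ (which exists, since $\bX_{(X,p,\xi)}$ has tame quotient singularities: $\overline{k}$-locally near the image of $p$ it is $X/A$ with $X$ smooth, and away from there we use the usual resolution of spaces with tame quotient singularities invoked in \S\ref{sec:arithmetic}), and the required smooth $K$-rational point of $Y$ will be a point lying over $\bfp$.

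The heart of the argument is therefore to show that $\bfp \in \bX_{(X,p,\xi)}(K)$ lifts to a $K$-rational point of the resolution, equivalently (by Proposition~\ref{prop:char-liftable}) that the singularity $(\bX_{(X,p,\xi)}, \bfp)$ is liftable. First I would identify this singularity: since $p$ is a smooth point of $X$ fixed by $A$, and $X/A = \bX_{(X,p,\xi),\overline{k}}$, the singularity $(\bX_{(X,p,\xi)}, \bfp)_{\overline{k}}$ is, by Cartan's lemma / Lemma~\ref{lem:tqs-closed} applied to the minimal stack, equivalent to $(\AA^d_{\overline{k}}/A, [0])$ for the linear action of $A$ on the tangent space $T_pX$ (which is faithful because the $A$-action on $X$ is faithful and $p$ is a fixed point, so $A$ embeds in $\GL_d$). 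Hence over $\overline{k}$ the singularity $(\bX_{(X,p,\xi)}, \bfp)$ is an $A$-singularity in the sense of Definition~\ref{def:R-singularity}, and therefore — being obtained from a singularity over $K$ by the base change $K \subseteq \overline{k}$ — it is stably equivalent to an $A$-singularity. Because $A$ is an $R_d$ group in $\cha k$, every $A$-singularity is liftable, and liftability is invariant under stable equivalence (Remarks after the definition of liftable, combined with the base-change invariance of the minimal stack), so $(\bX_{(X,p,\xi)}, \bfp)$ is liftable over $K$.

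With liftability in hand, Proposition~\ref{prop:char-liftable}(1) gives a $K$-rational point $\widetilde{\bfp}$ of the resolution $Y = \widetilde{\bX}_{(X,p,\xi)}$ lying over $\bfp$; since $Y$ is smooth over $K$, this is a regular $K$-rational point, and $Y \dashrightarrow \bX_{(X,p,\xi)}$ is a dominant (in fact birational) rational map of integral algebraic spaces of finite type over $K$. Theorem~\ref{thm:moduli} then applies verbatim and yields that $(X,p,\xi)$ is defined over $K = k(X,p,\xi)$. The main obstacle is the bookkeeping in the second paragraph: one must be careful that the identification of the completed local ring of $\bX_{(X,p,\xi)}$ at $\bfp$ with $\widehat{\AA^d/A}$ is valid over $K$ and not merely over $\overline{k}$ (this is exactly why the definition of $R_d$ group quantifies over all fields $K$ of the given characteristic and all faithful $d$-dimensional representations, and why one works with \emph{stable} equivalence rather than equivalence), and that the tameness/reducedness of $\underaut_{\overline{k}}(X,p)$ is genuinely used to guarantee $A$ is an honest finite group of order prime to $\cha k$ so that Lemma~\ref{lem:tqs-closed} and the resolution machinery are available.
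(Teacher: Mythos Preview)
Your proposal is correct and follows essentially the same route as the paper: obtain the rational point $\bfp$ on the compression via Lemma~\ref{lem:pointed}, identify $(\bX_{(X,p,\xi)},\bfp)$ as an $A$-singularity using Cartan's lemma/Lemma~\ref{lem:tqs-closed}, invoke the $R_d$ hypothesis to get liftability, and then feed a resolution into Theorem~\ref{thm:moduli}. The only difference is cosmetic: the paper packages the middle step as Lemma~\ref{lem:Rd} (applied to $\cX_{(X,p,\xi)}$), whereas you unpack that lemma inline.
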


\begin{proof}
We apply Lemma~\ref{lem:Rd} to the stack $\cX_{(X,p,\xi)}$, with moduli space $\bX_{(X,p, \xi)}$, to conclude that the space $\bX_{(X,p, \xi)}$ has an $R$-singularity at the rational point corresponding to $p$. Then if $Z \arr \bX_{(X,p, \xi)}$ is a resolution of singularities we see that $Z$ has a regular rational point, so the conclusion follows from Theorem~\ref{thm:moduli}.
\end{proof}

We still have to give meaningful examples of $R_{d}$ groups. 

A fairly trivial class of $R_{d}$ groups is the following: say that a group is \emph{strongly $R_{d}$} if for any embedding $G \subseteq \GL_{d}(K)$, where $K$ is an algebraically closed field of characteristic not dividing $|G|$, we have that $G$ is generated by pseudoreflexions in $\GL_{d}(K)$. By the Chevalley--Shephard--Todd theorem we have that $\AA^{d}/G$ is smooth, hence every strongly $R_{d}$ group is also $R_{d}$.

The following is straightforward.

\begin{proposition}\hfil
\begin{enumerate1}

\item If $m$ is a positive integer, $\rC_{m}^{d}$ is strongly $R_{d}$.

\item Dihedral groups are strongly $R_{2}$.

\end{enumerate1}
\end{proposition}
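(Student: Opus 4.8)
The plan is to diagonalize and then read off the pseudoreflexions from a dual basis. Fix an algebraically closed field $K$ with $\cha K\nmid m$ and a faithful representation $\rho\colon \rC_m^d\into\GL_d(K)$. Each $\rho(g)$ satisfies $\rho(g)^m=1$ with $x^m-1$ separable over $K$, so $\rho(g)$ is diagonalizable; as the $\rho(g)$ commute they are simultaneously diagonalizable, and after a change of basis we may write $\rho(g)=\operatorname{diag}(\chi_1(g),\dots,\chi_d(g))$ with $\chi_i\colon \rC_m^d\to\mmu_m(K)$ characters. Faithfulness says $\bigcap_i\ker\chi_i=1$, i.e.\ $\chi_1,\dots,\chi_d$ generate the character group, which is free of rank $d$ over $\ZZ/m$; since a surjection between finite sets of equal cardinality is a bijection, the $\chi_i$ form a $\ZZ/m$-basis of it. I would then fix a primitive $m$-th root of unity $\zeta\in K$, take the dual basis $g_1,\dots,g_d$ of $\rC_m^d$ (so $\chi_i(g_j)=\zeta^{\delta_{ij}}$), and observe that $\rho(g_j)$ is the diagonal matrix with entry $\zeta$ in position $j$ and $1$ elsewhere: this is a pseudoreflexion, fixing the hyperplane $\{x_j=0\}$. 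Since $g_1,\dots,g_d$ generate $\rC_m^d$, the image $\rho(\rC_m^d)$ is generated by pseudoreflexions. (The case $m=1$ is trivial.) I do not expect any obstacle; the only point needing a moment of care is the passage from ``$d$ generators'' to ``basis'' over $\ZZ/m$, handled by the cardinality argument above.

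\textbf{Part (2).} Let $G$ be dihedral of order $2n$. For $n\le 2$ the group $G$ is $\rC_2$ or $\rC_2^2$, the latter being covered by part~(1) (and $\rC_2$, as noted in the paper, is genuinely not $R_2$), so I would assume $n\ge 3$, so that $G=\langle r,s\mid r^n=s^2=1,\ srs=r^{-1}\rangle$ is non-abelian with $Z(G)\subseteq\langle r\rangle$. Let $\rho\colon G\into\GL_2(K)$ be faithful over an algebraically closed $K$ with $\cha K\nmid 2n$. The heart of the argument is the behaviour of the involutions $sr^i\notin\langle r\rangle$: such an element is not central in $G$, hence by faithfulness $\rho(sr^i)$ is a non-central element of order $2$ in $\GL_2(K)$; since $\cha K\ne 2$ it is diagonalizable with eigenvalues in $\{1,-1\}$, and the two central possibilities $I$ (order $1$) and $-I$ are excluded, so $\rho(sr^i)$ is a pseudoreflexion. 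Finally $s$ and $sr$ generate $G$ (as $s\cdot sr=r$), so $\rho(G)=\langle\rho(s),\rho(sr)\rangle$ is generated by two pseudoreflexions.

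The main (very mild) obstacle in part~(2) is the step ruling out $\rho(sr^i)=-I$: this is exactly where I use that $sr^i$ is not central in $G$, which is valid precisely for $n\ge 3$ and explains why the degenerate case $n=1$ must be excluded. With both parts in hand the proposition follows, since ``strongly $R_d$'' was defined as being generated by pseudoreflexions, and the implication ``strongly $R_d\Rightarrow R_d$'' has already been established above via the Chevalley--Shephard--Todd theorem.
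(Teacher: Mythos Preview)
Your argument is correct. The paper itself gives no proof of this proposition, dismissing it as ``straightforward,'' so there is nothing to compare against; your simultaneous diagonalization plus dual-basis argument for part~(1) and your eigenvalue analysis of the non-central involutions for part~(2) are exactly the kind of direct verification one would expect. Your observation that the degenerate case $D_1=\rC_2$ must be excluded (since $\rC_2$ is not $R_2$, as the paper itself notes) is a useful clarification of what the authors presumably intend by ``dihedral groups.''
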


To give examples of finite groups that are $R_{d}$ without being strongly $R_{d}$ is more complicated, and requires much more technology. Here are two results in this direction.

\begin{theorem}\label{thm:Rd-1}
Let $G$ a finite group with the following properties.

\begin{enumerate1}

\item The center of $G$ is trivial.

\item The projection $\aut G \arr \out G$ is split.

\item Either $G$ is perfect, or all proper normal subgroups of $G$ are perfect.

\end{enumerate1}

Then $G$ is $R_{d}$ for all $d$.
\end{theorem}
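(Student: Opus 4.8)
The plan is to unwind the definitions. By Definitions~\ref{def:Rd} and~\ref{def:R-singularity}, proving that $G$ is $R_{d}$ for every $d$ is equivalent to proving that every $G$-singularity $(S,s)$---over an arbitrary field $k$ with $\cha k\nmid|G|$---is liftable, i.e.\ satisfies the conditions of Proposition~\ref{prop:char-liftable}. Such an $(S,s)$ is stably equivalent to $\AA^{d}_{K}/G$ for some faithful representation $G\subseteq\GL_{d}(K)$, so the first task is to understand the subgroup $P\subseteq G$ generated by the pseudoreflections of this representation.

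I expect the group-theoretic input controlling $P$ to be the main point. Since $|G|$ is prime to $\cha K$, every element of $G$ is semisimple, so a pseudoreflection $g$ has eigenvalues $(1,\dots,1,\lambda)$ with $\lambda=\det(g)\neq1$; hence the restriction of $\det\colon G\to K^{*}$ to any nontrivial subgroup generated by pseudoreflections is nontrivial, so no such subgroup is perfect. Feeding in hypothesis~(3): if $G$ is perfect then $\det(G)=1$, so $G$ has no pseudoreflections and $P=1$; if instead all proper normal subgroups of $G$ are perfect, then the normal subgroup $P$, being either trivial or non-perfect, must equal $G$. So $P$ is either $1$ or all of $G$. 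If $P=G$, then by the Chevalley--Shephard--Todd theorem (valid in the tame case) $\AA^{d}_{K}/G$ is smooth; hence $(S,s)$, being stably equivalent to a smooth singularity, is smooth, and is trivially liftable since $s\in S(k)$ lies over $s$ (condition~(3) of Proposition~\ref{prop:char-liftable}, the minimal stack being $S$ itself).

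The remaining case is $P=1$, where $G$ acts without pseudoreflections. Then the fundamental group of $(S,s)$---which is invariant under equivalence and under extension of scalars, hence may be computed after passing to a field over which $(S,s)$ becomes equivalent to $\AA^{d}/G$---equals $G/P=G$. Thus the fundamental gerbe $\cG_{(S,s)}$ is a finite tame gerbe over $\spec k$ whose geometric automorphism group is $G$; by Lemma~\ref{lem:closed-neutral} its base change to $\overline{k}$ is neutral, hence isomorphic to $\cB_{\overline{k}}G$. Hypotheses~(1) and~(2) are now exactly the input required by Lemma~\ref{lem:moduli}, which gives that $\cG_{(S,s)}$ is neutral, i.e.\ $(S,s)$ is liftable. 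Apart from this the only things to verify are bookkeeping: that stable equivalence unwinds as claimed, that smoothness and the value of the fundamental group behave well under the scalar extensions involved, and that a neutral finite gerbe over $\overline{k}$ with constant band $G$ really is $\cB_{\overline{k}}G$. The genuinely delicate step is the pseudoreflection-versus-perfectness dichotomy, where hypothesis~(3) and tameness must be combined; after that the argument is a direct appeal to Lemma~\ref{lem:moduli}.
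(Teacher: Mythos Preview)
Your argument is correct and follows the same strategy as the paper: use hypothesis~(3) to control the pseudoreflection subgroup $P$, identify the fundamental group of $(S,s)$ with $G$, and then invoke Lemma~\ref{lem:moduli} on the fundamental gerbe via hypotheses~(1) and~(2). In fact you are slightly more careful than the paper: the paper asserts outright that $P$ is trivial (``since either $P$ or $G$ is perfect''), but this overlooks the possibility $P=G$ with $G$ not perfect, which genuinely occurs---for instance $\rS_{n}$ in its standard $(n-1)$-dimensional reflection representation. Your dichotomy $P\in\{1,G\}$ and your disposal of the case $P=G$ via Chevalley--Shephard--Todd (the quotient is smooth, hence trivially an $R$-singularity) patches this cleanly.
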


For example, these conditions are satisfied for all symmetric groups $\rS_{n}$ and alternating groups $\rA_{n}$ with $n \geq 5$, $n \neq 6$.  Also, there are infinitely many classes of simple groups such that the projection $\aut G \arr \out G$ is split; a complete classification is given in \cite{lucchini-menegazzo-morigi}.

While this result is interesting, it does not give any new examples of applications of Theorem~\ref{thm:moduli2}, because of Proposition~\ref{prop:moduli1}. The next result, however, does yield new examples.

\begin{theorem}\label{thm:Rd-2}
A group of order prime to $d!$ is $R_{d}$.
\end{theorem}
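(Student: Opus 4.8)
The plan is to argue by induction on $d$, after unwinding the definitions. By Definition~\ref{def:R-singularity} it suffices to show that for every field $k$ whose characteristic does not divide $|G|$, every $G$-singularity $(S,s)$ over $k$ is liftable; by Proposition~\ref{prop:char-liftable} this means the fundamental gerbe $\cG_{(S,s)}$ is neutral. The base case $d=1$ is trivial, since every finite group is $R_1$. So fix $d\ge 2$ and a group $G$ of order prime to $d!$, and assume the statement for all smaller dimensions.

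Given a $G$-singularity $(S,s)$ over $k$, I would run the blowup construction on it to produce the associated variety $E$; by Proposition~\ref{prop:var-split}, $(S,s)$ is liftable if and only if $k$ splits $E$, so everything comes down to splitting $E$. Over a separable closure $K$ one has (by Corollary~\ref{cor:tqs-rep} and the description of the construction) that $E_K$ is equivalent to $\PP^{d-1}_K/\overline{G}$, where $\overline{G}$ is the image of $G$ in $\PGL_d(K)$; since $|\overline{G}|$ divides $|G|$ it is prime to $(d-1)!$. Thus $E$ is a $k$-form of a quotient of $\PP^{d-1}$ by a group of order prime to $(d-1)!$, it has tame quotient singularities, and the singularity of $E$ at any closed point $e$ is a $G_e$-singularity for a subgroup $G_e\subseteq\overline{G}$ attached to a stabilizer of the $\overline{G}$-action on $\PP^{d-1}$ — in particular $|G_e|$ is prime to $(d-1)!$, so that singularity is liftable by the inductive hypothesis.

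To finish, it is enough (using the equivalence of Proposition~\ref{prop:char-liftable} together with the Lang--Nishimura theorem, Theorem~\ref{LN}) to exhibit a $k$-rational point on a resolution of singularities $\widetilde{E}\arr E$. I would produce it by stratifying $\PP^{d-1}$ by stabilizer type: a minimal-dimensional closed stratum is a Galois-stable closed subscheme, and the key point is to arrange that it can be chosen so that its image in $E$ is a \emph{$k$-rational} point $e$. For this one exploits the extra rigidity that the blowup construction carries, namely that the normal bundle $\cN$ on $\cG$ decomposes (over $K$) into eigen-subbundles indexed by the characters of $G$ occurring in the defining representation, and that $G$ has been arranged to contain no pseudoreflections; the Galois action permutes these characters, and a judicious Galois-stable piece — e.g. one spanned by a full Galois orbit of characters, whose projectivization gives a section of $\PP(\cN)\arr\cG$ and hence a closed subvariety of $E$ meeting the distinguished locus — yields a $k$-point $e$ of $E$ whose singularity is liftable by induction, so that $\widetilde{E}$ acquires a $k$-point over $e$.

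The main obstacle is exactly this last step: upgrading the local information ``the singularity of $E$ at $e$ is liftable'' to a genuine $k$-rational point of $\widetilde{E}$, and this is the unique place where the hypothesis ``prime to $d!$'' (as opposed to merely ``prime to $(d-1)!$'', which is all the inductive hypothesis ever supplies) must be used. Concretely, one has to rule out the ``nonsplit conic'' phenomenon in the exceptional locus over $s$ — the possibility that Galois stabilizes a component of the relevant configuration and twists it into a form of $\PP^{n}$ with no rational point. I expect this to be dealt with by the choice of Galois-stable stratum described above, i.e. by showing that the point $e\in E$ one extracts is forced to be of degree $1$ thanks to the character-eigenbundle decomposition of $\cN$ and the coprimality of $|G|$ with $d!$; alternatively, a norm/corestriction argument on the fibre of $\widetilde{E}$ over a closed point of degree dividing $|G|$ may serve the same purpose. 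Once the rational point on $\widetilde{E}$ is in hand, $E$ is split over $k$, hence $\cG_{(S,s)}$ is neutral, hence $(S,s)$ is liftable, completing the induction.
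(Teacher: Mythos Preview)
Your inductive strategy via the blowup construction and the associated variety $E$ matches the paper, and you correctly observe that the singularities of $E$ are liftable by induction. But the endgame has a real gap, and the fix is a lemma you have not stated: a finite subgroup $G\subseteq\GL_d(K)$ of order prime to $d!$ is automatically \emph{abelian} (Lemma~\ref{lem:abelian}), since every irreducible representation of $G$ has degree dividing $|G|$ and hence equal to $1$. You implicitly invoke this when you speak of the eigen-subbundle decomposition of $\cN$ indexed by characters, but you never justify it, and more importantly you do not exploit its main consequence: once $G$ is abelian, the gerbe $\cG$ corresponds to a class in $\H^2(k,G')$ for a twist $G'$ of $G$, so it suffices to split $\cG$ over an extension $k'/k$ of degree \emph{prime to} $|G|$ and then apply corestriction. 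Your proposed ``norm/corestriction argument \dots\ over a closed point of degree dividing $|G|$'' has this condition exactly backwards and would give nothing.

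Accordingly, the paper does not attempt to find a $k$-point on $\widetilde{E}$ directly, as you do. Instead it produces $E(k')\neq\emptyset$ with $[k':k]$ prime to $|G|$ in two steps, each contributing a factor at most $d$: first, the finite \'etale $k$-scheme parametrizing the (at most $d$) eigen-subbundles $V_\chi\subseteq\cN$ has a point over some $k'$ of degree $\leq d$; second, over that $k'$ the image $P\subseteq E$ of the chosen $\PP(V_\chi)$ is a Brauer--Severi variety (since $G$ acts trivially on $\PP(V_\chi)$) of dimension $\leq d-1$, hence of index $\leq d$. The inductive hypothesis then lifts the resulting $k'$-point of $E$ to $\widehat{E}$, and Proposition~\ref{prop:var-split} plus the cohomological argument finish. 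Your attempt to force a Galois-stable stratum to yield a point of degree $1$ over $k$ cannot be made to work without this detour through an extension of degree prime to $|G|$.
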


The following easy result allows us to give more examples of $R_{d}$ groups.

\begin{proposition}
Let $G$ be a finite group, $H \subseteq G$ a normal subgroup. If $H$ is strongly $R_{d}$ and $G/H$ is $R_{d}$, then $G$ is $R_{d}$.
\end{proposition}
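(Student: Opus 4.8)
The plan is to reduce the claim to the definition of liftability and then exploit the multiplicativity of liftability under the relevant quotient constructions. Let $G$ be a finite group with a normal subgroup $H$ that is strongly $R_d$, and suppose $G/H$ is $R_d$. To show $G$ is $R_d$, I must show that every $G$-singularity is liftable; by Remark (3) after Definition~\ref{def:R-singularity} I may work over an algebraically closed field $K$ (of characteristic not dividing $|G|$), and I must show that for every faithful $d$-dimensional representation $G\subseteq\GL_d(K)$, the singularity $(\AA^d_K/G,[0])$ is an $R$-singularity; since liftability is a stable-equivalence invariant (and $R$-singularities are defined via all stably equivalent singularities), it suffices to treat the model singularity itself. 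In fact, since a $G$-singularity is stably equivalent to $\AA^d_K/G$ for \emph{some} faithful representation, and we want to conclude liftability of all of them, the content is: for every field $k$, every singularity $(S,s)$ stably equivalent to some $\AA^d_K/G$ is liftable.

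The key step is the following factorization. Since $H$ is strongly $R_d$, for the given embedding $G\subseteq\GL_d(K)$ the subgroup $H$ is generated by pseudoreflections, so by the Chevalley--Shephard--Todd theorem $\AA^d_K/H$ is smooth and $G/H$ acts on it. Moreover the fixed point $[0]$ of $G/H$ on $V\eqdef\AA^d_K/H$ is smooth, so $(V,[0])$ together with the $G/H$-action is, after passing to tangent spaces and applying Cartan's lemma as in the proof of Lemma~\ref{lem:tqs-closed}, equivalent to $\AA^d_K/(G/H)$ for the induced faithful $d$-dimensional representation of $G/H$ on the tangent space of $V$ at $[0]$ — and we have $\AA^d_K/G = (\AA^d_K/H)/(G/H) = V/(G/H)$. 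Hence $(\AA^d_K/G,[0])$ is equivalent to a $(G/H)$-singularity. Now I must upgrade this to the relative/arithmetic setting: given a singularity $(S,s)$ over an arbitrary field $k$ that is stably equivalent to $\AA^d_K/G$, I want to produce a singularity stably equivalent to a $(G/H)$-singularity and compare liftability. The cleanest route is to use the blowup construction of subsection~5.5 together with Proposition~\ref{prop:var-split}: liftability of $(S,s)$ is equivalent to the minimal stack $\widehat{S}$ having a $k$-rational point, i.e.\ to $k$ splitting $\cG_{(S,s)}$, which by Proposition~\ref{prop:var-split} is equivalent to $k$ splitting the associated variety $E$. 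So I would analyze the associated variety of a $G$-singularity in terms of that of the corresponding $(G/H)$-singularity: because $H$ is generated by pseudoreflections, the intermediate quotient $\AA^d/H$ is smooth, and the exceptional/associated-variety construction for $\AA^d/G\to S$ factors through the smooth space $\AA^d/H$, so that the associated variety of the $G$-singularity coincides (up to the relevant equivalences and rigidifications) with that of the $(G/H)$-singularity $V/(G/H)$. Since $G/H$ is $R_d$, that singularity is an $R$-singularity, hence liftable, hence its associated variety is split over $k$; by Proposition~\ref{prop:var-split} this forces $\cG_{(S,s)}$ to be split over $k$, i.e.\ $(S,s)$ is liftable. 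Running this for all fields $k$ and all stably equivalent singularities gives that $G$ is $R_d$.

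\textbf{Main obstacle.} The delicate point is making the reduction "quotient by the pseudoreflection subgroup $H$ first" work \emph{over a non-closed base field $k$} and compatibly with stable equivalence, since strong $R_d$-ness of $H$ is a statement over algebraically closed fields and the pseudoreflection subgroup is only canonically defined after base change to $K=\overline{k}$ (or a separable closure). One must check that $H$, being the subgroup of $G$ generated by all pseudoreflections in the fixed faithful representation, is normal in $G$ and Galois-stable, so that the intermediate quotient $\AA^d/H$ and the residual action of $G/H$ descend appropriately; and one must verify that the identification of $(\AA^d/G,[0])$ with a $(G/H)$-singularity is compatible with the stable-equivalence relation used in Definition~\ref{def:R-singularity}, so that "$G/H$ is $R_d$" can actually be invoked for the arithmetically-twisted singularity $(S,s)$ rather than just for the split model. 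I expect that once these compatibilities are set up — using that $H$ is characteristic in the given representation (it is generated by all pseudoreflections, hence preserved by the Galois action and by $\underaut$-type twists) and that forming the minimal stack and the associated variety commutes with field extension (as recorded after Proposition~\ref{prop:min-stack} and in subsection~5.5) — the argument goes through as sketched, with the blowup/associated-variety criterion doing the real work of transporting liftability from $G/H$ to $G$.
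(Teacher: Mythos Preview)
Your first paragraph after ``The key step'' already contains the complete proof, and the paper treats this proposition as easy for exactly that reason. Once you have shown that $(\AA^d_K/G,[0])$ is equivalent over $K$ to $(\AA^d_K/(G/H),[0])$ for some faithful $d$-dimensional representation of $G/H$, you are finished: equivalence over $K$ implies stable equivalence, stable equivalence is transitive, and being an $R$-singularity is by definition a property of the stable-equivalence class. Since $G/H$ is $R_d$, the singularity $(\AA^d_K/(G/H),[0])$ is an $R$-singularity, hence so is $(\AA^d_K/G,[0])$; as this holds for every $K$ and every faithful representation, $G$ is $R_d$.

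Everything from ``Now I must upgrade this to the relative/arithmetic setting'' onward is unnecessary, and your ``main obstacle'' is illusory. You do not need to descend the pseudoreflection subgroup $H$ to $k$, nor to compare associated varieties, nor to invoke Proposition~\ref{prop:var-split}. If $(S,s)$ over $k$ is stably equivalent to $(\AA^d_K/G,[0])$, then by transitivity it is already stably equivalent to the $(G/H)$-singularity $(\AA^d_K/(G/H),[0])$; since the latter is an $R$-singularity, $(S,s)$ is liftable directly from the definition. The whole point of the stable-equivalence formalism in Definition~\ref{def:R-singularity} is to absorb exactly the kind of arithmetic descent you are worrying about, so that one only needs to argue over an algebraically closed $K$. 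Your detour through the blowup construction would, if carried out, amount to reproving a special case of this transitivity by hand.
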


Thus, for example, a product of $d$ cyclic groups $\rC_{mr_{1}} \times \dots \times \rC_{mr_{d}}$, where $r_{1}$, \dots,~$r_{d}$ are prime to $d!$, is an $R_{d}$ group.

Notice that subgroups and quotients of $R_{d}$ groups are not necessarily $R_{d}$: for example, $\rC_{2} \times  \rC_{2}$ is $R_{2}$, but $\rC_{2}$ is not. Furthermore, the product of two $R_{d}$ groups is not necessarily $R_{d}$. A counterexample will appear in a forthcoming paper by the first author \cite{giulio-singularities}.

By putting together Theorem~\ref{thm:moduli}, Lemma~\ref{lem:pointed} and Theorem~\ref{thm:Rd-2}, we obtain the following.

\begin{theorem}\label{thm:pointed!}
	Let $\cM$ be a category of pointed structured spaces, $(X, p, \xi) \in \cM(\overline{k})$ a tame object such that $X$ is integral of dimension $d$. If $\underaut(X, p, \xi)$ is étale of degree prime to $d!$, then $(X, p, \xi)$ is defined over its field of moduli.
\end{theorem}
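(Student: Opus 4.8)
The plan is to reduce the statement to Theorem~\ref{thm:moduli}. Concretely, I would produce, over the field of moduli $k(X,p,\xi)$, an integral algebraic space of finite type carrying a regular rational point and admitting a dominant rational map to the compression $\bX_{(X,p,\xi)}$; Theorem~\ref{thm:moduli} then immediately yields the conclusion. The space in question will be a resolution of singularities of the compression, and the regular point on it will be produced by the $R_d$-group machinery. This is essentially the argument proving Theorem~\ref{thm:moduli2}, except that here only $\underaut(X,p,\xi)$, and not $\underaut(X,p)$, is controlled, so one works with the stack $\cX_{(X,p,\xi)}$ directly rather than quoting Theorem~\ref{thm:moduli2}.

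First I would analyze $G \eqdef \underaut_{\overline{k}}(X,p,\xi)$. Since $(X,p,\xi)$ is tame, $G$ is finite and linearly reductive; being moreover étale over the algebraically closed field $\overline{k}$, it is an ordinary finite group, and linear reductivity forces $|G|$ to be prime to $\cha k$, while the hypothesis makes $|G|$ prime to $d!$. By Theorem~\ref{thm:Rd-2}, $G$ is an $R_d$ group, hence $R_d$ in characteristic $\cha k$. Because the faithful functor $\cM \arr \pas$ exhibits $G$ as a group of automorphisms of $X$ fixing the smooth point $p$, and $X$ is integral, $G$ acts faithfully on $X$.

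Next I would set up the relevant stack. Let $\cX \arr \cM$ be the universal family, $\cX_{\mathrm{sm}} \s \cX$ its smooth locus, and replace $\cX_{(X,p,\xi)} = \cG_{(X,p,\xi)}\times_{\cM}\cX$ by $\cG_{(X,p,\xi)}\times_{\cM}\cX_{\mathrm{sm}}$ and the compression by the corresponding open subspace $V \s \bX_{(X,p,\xi)}$; since $p$ is a smooth point of $X$, this $V$ still contains the $k(X,p,\xi)$-rational point $\bfp$ furnished by Lemma~\ref{lem:pointed}. After this harmless localization, $\cX_{(X,p,\xi)}$ is a geometrically integral, smooth, tame Deligne--Mumford stack of dimension~$d$ with finite inertia over $k(X,p,\xi)$ (its base change to $\overline{k}$ is $[X_{\mathrm{sm}}/G]$), and because $G$ acts faithfully on the integral space $X$ it acts freely on a dense open, so $\cX_{(X,p,\xi)}$ is generically a scheme and the morphism to its coarse space $V$ is a birational isomorphism. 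Moreover the section $\cG_{(X,p,\xi)} \arr \cX_{(X,p,\xi)}$ coming from the marked point (Lemma~\ref{lem:pointed}) determines a smooth geometric point of $\cX_{(X,p,\xi)}$ over $\bfp$ whose automorphism group is the stabilizer of $p$, that is, all of $G$.

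Now I would apply Lemma~\ref{lem:Rd} to this stack and this geometric point: as $G$ is $R_d$ in $\cha k$, the pair $(V,\bfp)$ is an $R$-singularity, in particular liftable, so by Proposition~\ref{prop:char-liftable} a resolution of singularities $\widetilde{V}\arr V$ has a $k(X,p,\xi)$-rational point lying over $\bfp$. This point is a regular rational point on the integral algebraic space $\widetilde{V}$, which is of finite type over $k(X,p,\xi)$ and maps dominantly (birationally, through $V$) to $\bX_{(X,p,\xi)}$; applying Theorem~\ref{thm:moduli} with $Y=\widetilde{V}$ then shows that $(X,p,\xi)$ is defined over $k(X,p,\xi)$. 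I expect the only delicate points to be bookkeeping: justifying that the passage to the smooth locus of the universal family is innocuous (it is an intrinsic open substack, defined over the field of moduli, and retains $\bfp$), and verifying carefully that the distinguished geometric point of $\cX_{(X,p,\xi)}$ has automorphism group exactly $G$, as required by Lemma~\ref{lem:Rd}. The substantive content — that $G$ is $R_d$, hence that the compression's singularity at $\bfp$ is liftable — is Theorem~\ref{thm:Rd-2} combined with Lemma~\ref{lem:Rd}, so there is no genuine obstacle beyond assembling the pieces, the main subtlety being that $X$ is only integral and not smooth, which is what forces the localization step.
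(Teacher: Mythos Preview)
Your proposal is correct and follows essentially the same route as the paper's proof: use Lemma~\ref{lem:pointed} to produce the rational point $\bfp$ on the compression, invoke Theorem~\ref{thm:Rd-2} (via Lemma~\ref{lem:Rd}) to see that $(\bX_{(X,p,\xi)},\bfp)$ is a liftable tame quotient singularity, and feed a resolution into Theorem~\ref{thm:moduli}. The only elaboration you add is the explicit passage to the smooth locus of the universal family to guarantee that $\cX_{(X,p,\xi)}$ is smooth near the marked section before applying Lemma~\ref{lem:Rd}; the paper leaves this implicit, simply asserting that $(\bX_{(X,p,\xi)},\bfp)$ is a tame quotient singularity (which is justified since $p$ is a smooth point of $X$).
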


\begin{proof}
	The rational point $\bfp$ of the compression $\bX_{(X, p, \xi)}$ given by Lemma~\ref{lem:pointed} is a tame quotient singularity whose fundamental group has degree prime to $d!$ by hypothesis, hence $(\bX_{(X, p, \xi)},\bfp)$ is liftable by Theorem~\ref{thm:Rd-2}. We conclude by applying Theorem~\ref{thm:moduli}.
\end{proof}

\subsection{The proofs of Theorems \ref{thm:Rd-1} and \ref{thm:Rd-2}} \hfil

Let $G \subseteq \GL_{d}(K)$, where $K$ is an algebraically closed field of characteristic prime to $|G|$, where $G$ satisfies the hypotheses of one of the theorems; and let $(S, s)$ a singularity over a field $k$ that is stably equivalent to $(\AA^{d}_{K}/G,[0])$. By extending $K$ we may assume that $k \subseteq K$, and $(S,s)_{K}$ is equivalent to $(\AA^{d}_{K}/[0])$; under these hypotheses we need to show that $(S, s)$ is liftable.

Let $P\subset G$ be the subgroup generated by pseudoreflexions. Under the hypothesis of Theorem~\ref{thm:Rd-1}, $P$ is trivial, since either $P$ or $G$ is perfect, so that the composite $P\subseteq G\to\GL_{d}(K)\xrightarrow{\det} K^{*}$ is trivial. Under the hypothesis of Theorem~\ref{thm:Rd-2}, by Lemma~\ref{lem:tqs-closed} applied to $[\AA_{K}^{d}/G]$ we have that the fundamental group of $(S,s)$ is a quotient of $G$ and hence it is abelian of order prime to $d!$. We may thus replace $G$ with the fundamental group of $(S,s)$ and assume that $P$ is trivial by Corollary~\ref{cor:tqs-rep}. Hence, in both cases we may assume that $G$ is the fundamental group of $(S,s)$.

Let $\cG$ be the fundamental gerbe of $(S,s)$; we have $\cG_{K}\simeq\cB_{K}G$.

\subsubsection*{The proof of Theorem~\ref{thm:Rd-1}}  The gerbe $\cG$ satisfies the hypotheses of Lemma~\ref{lem:moduli}, hence $\cG$ is neutral and $(S,s)$ is liftable.

\subsubsection*{The proof of Theorem~\ref{thm:Rd-2}}

First of all, we may assume that $G$ is abelian, because of the following elementary lemma, which was pointed out to us by János Kollár.

\begin{lemma}\label{lem:abelian}
Let $K$ be a field, $d$ a positive integer, and $G \subseteq \GL_{d}(K)$ a finite subgroup whose order is not divisible by $\cha K$ and prime to $d!$. Then $G$ is abelian.
\end{lemma}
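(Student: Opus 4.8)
The plan is to use the fact that a finite subgroup of $\GL_{d}(K)$ of order prime to $\cha K$ is conjugate to a subgroup of $\GL_{d}(\overline{\QQ})$ or, more precisely, that its representation theory is "the same" as in characteristic zero: since $|G|$ is invertible in $K$, the group algebra $K[G]$ is semisimple, and the given faithful representation $V = K^{d}$ decomposes as a direct sum of irreducibles. The key numerical input is that the order of $G$ is prime to $d!$. First I would reduce to the case where $G$ acts irreducibly on a subspace of dimension $e \leq d$: if $V = \bigoplus V_i$ with $\dim V_i = e_i$, then each $e_i \leq d$, so $e_i! \mid d!$ and hence $|G|$ is prime to $e_i!$ for every $i$; moreover $G$ injects into $\prod_i \GL(V_i)$ since $V$ is faithful, so it suffices to show each image $G_i \subseteq \GL(V_i)$ is abelian (a subgroup of a product of abelian groups is abelian). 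Thus I may assume $G \subseteq \GL_{e}(K)$ acts irreducibly with $e \leq d$ and $|G|$ prime to $e!$.

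The heart of the argument is then: an irreducible representation of a finite group $G$ over an algebraically closed field of characteristic prime to $|G|$ has dimension $e$ dividing $|G|$ (this is the classical theorem that $\dim$ of an irreducible divides the group order, valid in the "non-modular" case). Hence $e \mid |G|$; but $|G|$ is prime to $e!$, and $e$ divides $e!$ whenever $e \geq 1$, so we need $\gcd(e, |G|) = 1$ as well, forcing $e = 1$. (Concretely: $e \mid e!$, and $e \mid |G|$, so $e \mid \gcd(e!, |G|) = 1$.) An irreducible representation of dimension $1$ means $G$ acts by scalars on $V_i$, so each $G_i$ is a subgroup of $K^{*}$ and therefore cyclic, in particular abelian. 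Running this for every irreducible summand shows $G \hookrightarrow \prod_i G_i$ is abelian.

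The one point requiring a little care — and the place I would expect the only genuine obstacle — is the passage to $\overline{K}$ and the applicability of the "$\dim$ of irreducible divides $|G|$" theorem when $K$ is not algebraically closed. By Remark following Definition~\ref{def:R-singularity} (third item) we may as well assume $K$ algebraically closed from the start, but even if not: extending scalars to $\overline{K}$ preserves $|G|$ being invertible, and an irreducible $K[G]$-module of dimension $e$ becomes a sum of $\overline{K}[G]$-irreducibles each of dimension $\leq e \leq d$, so the same divisibility bound applies to each constituent over $\overline{K}$, and the argument above forces all of them to be $1$-dimensional; hence the original module was $1$-dimensional over $K$ too. So the argument is robust. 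The divisibility theorem itself is standard (it follows from the fact that character values are algebraic integers together with Schur's integrality argument, or from the structure of semisimple group algebras), and since we are allowed to invoke results "well known to the experts" I would simply cite it; no new computation is needed.
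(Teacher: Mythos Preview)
Your argument is correct and essentially identical to the paper's: reduce to $K$ algebraically closed, decompose $K^{d}$ into irreducibles by Maschke, invoke the classical fact that the dimension of an irreducible representation divides $|G|$ (the paper cites Serre, \emph{Linear Representations of Finite Groups}, \S15.5 for the positive-characteristic case), and conclude that every irreducible summand is $1$-dimensional since $\gcd(|G|,d!)=1$. Your write-up is more explicit about the reduction to the irreducible case and the passage to $\overline{K}$, but the substance is the same.
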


\begin{proof}
We can assume that $K$ is algebraically closed. Since $\cha K$ does not divide $|G|$ we have that $K^{d}$ decomposes as a sum of irreducible representations. But the degree of any irreducible representation divides $|G|$: this is standard in characteristic~$0$, and follows from \cite[\S15.5]{serre-linear-representations} if $\cha K > 0$. So $K^{d}$ decomposes as a sum of $1$-dimensional representations, and we obtain the result.
\end{proof}

Since $G$ is abelian, then $\cG$ is associated with a cohomology class $c\in\H^{2}(k,G')$, where $G'$ is a twisted form of $G$ over $k$. Because of this, it is enough to prove that there exists a finite field extension $k'/k$ of degree prime with $|G|$ which splits $\cG$: this would imply that $[k':k]c\in\H^{2}(k,G')$ is trivial and hence that $c$ is trivial, too. 

Let us proceed by induction on $d$, starting from the case $d = 1$, which is trivial. Assume that the theorem holds in dimension $d-1$. Let $E$ be its associated variety; since $E_{K}\simeq\PP^{d-1}/\overline{G}$ where $\overline{G}$ is the image of $G$ in $\PGL_{d}(K)$, we have that $E$ has liftable singularities by the inductive hypothesis. Thanks to Proposition~\ref{prop:var-split} it is enough to find a finite field extension $k'/k$ of degree prime with $|G|$ and such that $E(k')\neq\emptyset$. 

There exists a finite separable extension $k_{1}/k$ such that $\cG_{k_{1}}$ is isomorphic to $\cB_{k_{1}}G$, and that the characters of $G$ are defined over $k_{1}$. Let $\cN$ be the normal bundle of $\cG$ in $\widehat{S}$. The pullback of $\cN$ to $\cB_{k_{1}}G$ corresponds to a $d$-dimensional representation $V$ of $G$, with an eigenspace decomposition $V = \bigoplus_{\chi \in \widehat{G}}V_{\chi}$, where $\widehat{G}$ denotes the group of characters of $G$. 

Define a functor $\Gamma\colon \aff[k]\op \arr \catset$ as follows. If $T$ is a $k$-scheme, then $\Gamma(T)$ is the set of subbundles $\cM \subseteq \cN_{T} \arr \cG_{T}$, with the property that there exist an fppf cover $\{\phi_{i}\colon T_{i} \arr T\}$ and morphisms $\psi_{i}\colon T_{i} \arr \spec k_{1}$, such that for each $i$ there exists a $\chi \in \widehat{G}$ such that $V_{\chi} \neq 0$ and $\phi_{i}^{*}\cM = \psi_{i}^{*}V_{\chi}$ in $\phi_{i}^{*}\cN = \psi_{i}^{*}V$. 

Clearly, $\Gamma$ is an fppf sheaf. The pullback $\Gamma_{k_{1}}\colon \aff[k_{1}]\op \arr \catset$ is easily checked to be represented by the disjoint union of copies of $\spec k_{1}$, one for each $\chi$ for which $V_{\chi} \neq 0$; this implies that $\Gamma$ is represented by a finite étale scheme over $k$, of degree at most $d$, because there are at most $d$ characters $\chi$ with $V_{\chi} \neq \emptyset$. So there exists a finite extension $k'/k$ of degree at most $d$, hence prime with $|G|$, such that $\Gamma(k') \neq \emptyset$.

After replacing $k$ with $k'$, we can assume that there exists a non-zero subbundle $\cM \subseteq \cN$ whose pullback to $\cG_{K} = \cB_{K}G$ is $V_{\chi} \neq 0$. Consider the projective subbundle $\PP(\cM) \subseteq \PP(\cN) = \cE$. Calling $P$ the moduli space of $\PP(\cM)$, we have $P \subseteq E$; extending the scalars to $k_{1}$ we see that $P_{k_{1}} = \PP(V_{\chi})/G = \PP(V_{\chi})$, since the action of $G$ on $\PP(V_{\chi})$ is trivial. Hence $P$ is a Brauer--Severi variety of dimension at most $d-1$, and it has index at most $d$. This means that there exists a finite extension $k'/k$ of degree at most $d$ such that $P(k') \neq \emptyset$. Then $E(k') \neq \emptyset$, and we conclude.

\subsection{Isolated $\rC_{2}$-singularities in odd dimension}\label{subsec:C2}

The proof of Theorem~\ref{thm:Rd-2} can be adapted to prove that singularities of type $\AA^{d}/G$ are $R$-singularities for many cases that are not covered in the statement of the theorem. Here we give just one example, that has an interesting application.

\begin{theorem}\label{thm:Rd-C2-isolated}
Let $n$ be a positive integer which is not divisible by $\cha K$, and consider the standard action of $\mmu_{n}$ on $\AA^{d}_{K}$ by multiplication. If $n$ and $d$ are relatively prime, then $\AA_{K}^{d}/\mmu_{n}$ is an $R$-singularity.
\end{theorem}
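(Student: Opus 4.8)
The plan is to follow the template of the proof of Theorem~\ref{thm:Rd-2}, exploiting the fact that the $\mmu_n$-action by scalar multiplication is as degenerate as possible: \emph{every} nonzero vector is an eigenvector, so the associated variety is the whole projective space $\PP^{d-1}$, which is split. Concretely, let $(S,s)$ be a singularity over a field $k$ stably equivalent to $(\AA^d_K/\mmu_n,[0])$; after extending $K$ we may assume $k\subseteq K$ and $(S,s)_K$ is equivalent to $(\AA^d_K/\mmu_n,[0])$. Since the scalar action has no pseudoreflexions (a scalar $\zeta\neq 1$ has no fixed hyperplane when $d\geq 2$; and the $d=1$ case is trivial), Corollary~\ref{cor:tqs-rep} tells us the fundamental group of $(S,s)$ is exactly $\mmu_n$, and the fundamental gerbe $\cG$ satisfies $\cG_K\simeq\cB_K\mmu_n$. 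As $\mmu_n$ is abelian, $\cG$ corresponds to a class $c\in\H^2(k,\mmu_n')$ for a twisted form $\mmu_n'$ of $\mmu_n$; it suffices to find a finite extension $k'/k$ of degree prime to $n$ that splits $\cG$, since then $[k':k]\cdot c=0$ forces $c=0$.

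Next I would invoke the blowup construction: let $E$ be the associated variety of $(S,s)$, so $E_K\simeq\PP^{d-1}_K/\overline{\mmu_n}$. But here $\overline{\mmu_n}$, the image of $\mmu_n$ in $\PGL_d$, is \emph{trivial} — scalars act trivially on $\PP^{d-1}$ — so $E_K\simeq\PP^{d-1}_K$. Hence $E$ is (a form of projective space, in particular) geometrically integral and has trivial, hence liftable, singularities, and $E$ is split by any field that it has a point over. By Proposition~\ref{prop:var-split}, it therefore suffices to find a finite extension $k'/k$ of degree prime to $n$ with $E(k')\neq\emptyset$. Since $n=\mmu_n\cap\GG_m$ is all of $\mmu_n$, the rigidification $\overline{\cE}=\cE\thickslash\mmu_n$ has $\overline{\cE}_K\simeq[\PP^{d-1}_K/\overline{\mmu_n}]=\PP^{d-1}_K$, so $\overline\cE$ is already an algebraic space, a form of $\PP^{d-1}$ over $k$, i.e.\ $E=\overline{\cE}$ is a Brauer--Severi variety of dimension $d-1$. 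A Brauer--Severi variety of dimension $d-1$ has index dividing $d$, so it acquires a rational point over some extension $k'/k$ of degree dividing $d$; by the hypothesis $\gcd(n,d)=1$, this degree is prime to $n$, and we are done.

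The one place needing a little care — and the main (mild) obstacle — is verifying that the associated variety $E$ really is the Brauer--Severi variety $\overline{\cE}$ rather than something with a residual quotient, i.e.\ checking that the ``$H$'' of the blowup construction is all of $G=\mmu_n$ in this case and that consequently $\overline{\cE}\to E$ is an isomorphism. This is exactly because the representation is the scalar representation: $H=G\cap K^*=\mmu_n=G$, so $\overline{G}$ is trivial, $[\PP^{d-1}_K/\overline G]=\PP^{d-1}_K$ is already an algebraic space, and the Stein-factorization / rigidification steps in the construction collapse. Once this identification is in place, the index-bound argument for Brauer--Severi varieties and Proposition~\ref{prop:var-split} finish the proof immediately; the coprimality $\gcd(n,d)=1$ is used only at the very last step to ensure the splitting field has degree prime to $n=|G|$.
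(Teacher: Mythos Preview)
Your proposal is correct and follows essentially the same approach as the paper's proof: identify the associated variety $E$ as a Brauer--Severi variety of dimension $d-1$ (because the scalar action has trivial image $\overline{G}$ in $\PGL_d$), use the index bound to split $E$ over an extension of degree dividing $d$, invoke Proposition~\ref{prop:var-split} to split the fundamental gerbe $\cG$, and conclude via the abelian restriction--corestriction trick since $\gcd(n,d)=1$. Your write-up is more detailed than the paper's (which compresses all of this into three sentences), but the argument is the same.
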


If $d$ is odd, $\cha K \neq 2$, $G \subseteq \GL_{d}(K)$ is a finite subgroup of order $2$, and $\AA^{d}_{K}/G$ has an isolated singularity, then $G=\mmu_{2}{\rm Id}\subset\GL_{d}(K)$. So from the theorem we get the following.

\begin{corollary}\label{cor:isolated-C2}
An isolated, odd-dimensional $\rC_{2}$-singularity is an $R$-singularity.
\end{corollary}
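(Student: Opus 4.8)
The plan is to reduce to the standard local model and then invoke Theorem~\ref{thm:Rd-C2-isolated}. By Definition~\ref{def:R-singularity}, an isolated, odd-dimensional $\rC_2$-singularity $(S,s)$ over a field $k$ is stably equivalent to $(\AA^d_K/G,[0])$ for some algebraically closed field $K$ with $\cha K \neq 2$, some odd $d$ equal to $\dim S$, and some subgroup $G \subseteq \GL_d(K)$ of order~$2$; and since equivalence is realized by \'etale maps, $\AA^d_K/G$ has an isolated singularity at $[0]$. Because ``stably equivalent'' is transitive, a singularity is an $R$-singularity precisely when every member of its stable equivalence class is liftable; hence it suffices to prove that $(\AA^d_K/G,[0])$ is an $R$-singularity.

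First I would identify $G$. Let $g$ be the nontrivial element of $G$. As $g^2 = \mathrm{Id}$ and $\cha K \neq 2$, the element $g$ is diagonalizable with eigenvalues in $\{1,-1\}$; let $a$ and $b \eqdef d - a$ be the multiplicities of $1$ and of $-1$, so that $\AA^d_K/G \cong \AA^a_K \times \bigl(\AA^b_K/\{\pm 1\}\bigr)$. If $b \leq 1$ then the second factor is smooth (a point for $b = 0$, and for $b = 1$ the square map identifies $\AA^1_K/\{\pm 1\}$ with $\AA^1_K$, i.e.\ $g$ is a pseudoreflection), so $\AA^d_K/G$ is smooth, which is impossible since it has a singularity; thus $b \geq 2$. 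Then the singular locus of $\AA^d_K/G$ is exactly $\AA^a_K \times \{0\}$, the image of the fixed locus of $g$, and for $[0]$ to be an isolated point of it we need $a = 0$. Hence $g = -\mathrm{Id}$ and $G = \mmu_2\,\mathrm{Id} \subseteq \GL_d(K)$ acts on $\AA^d_K$ by scalar multiplication. (In particular $d = b \geq 2$, so $d \geq 3$ since $d$ is odd; for $d = 1$ there is no isolated $\rC_2$-singularity at all.)

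Finally, since $d$ is odd, $n \eqdef 2$ and $d$ are coprime, so Theorem~\ref{thm:Rd-C2-isolated} applies and shows that $\AA^d_K/\mmu_2$ is an $R$-singularity; by the reduction of the first step, so is $(S,s)$. I do not anticipate a genuine obstacle here: apart from Theorem~\ref{thm:Rd-C2-isolated}, the only ingredient is the classical description of the singular locus of a finite linear quotient in terms of the fixed loci of the non-pseudoreflection stabilizers, which for an order-$2$ group is completely elementary and is handled above by the explicit factorization. The entire weight of the corollary rests on Theorem~\ref{thm:Rd-C2-isolated}, whose proof adapts the argument for Theorem~\ref{thm:Rd-2}.
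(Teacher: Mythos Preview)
Your proof is correct and follows exactly the paper's approach: the paper's argument is the single sentence preceding the corollary, namely that an isolated $\rC_2$-singularity forces $G=\mmu_2\,\mathrm{Id}\subset\GL_d(K)$, after which Theorem~\ref{thm:Rd-C2-isolated} applies with $n=2$ coprime to the odd $d$. You have simply spelled out the eigenvalue analysis that the paper leaves implicit.
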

 
Plugging this into our main result we get the following.

\begin{corollary}
Let $\cM$ be a category of pointed structured spaces, $(X,p,\xi)$ a tame object of $\cM(\overline{k})$, such that $X$ is $d$-dimensional and integral. Assume that the automorphism group of $(X, p, \xi)$ is cyclic of order $2$, and that $p$ is an isolated fixed point for its action on $X$. Then if $d$ is odd, $(X, p,\xi)$ is defined over its field of moduli.
\end{corollary}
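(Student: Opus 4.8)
The plan is to deduce this from Theorem~\ref{thm:moduli} and Corollary~\ref{cor:isolated-C2}, in the same spirit as the proof of Theorem~\ref{thm:pointed!}. Write $G \eqdef \aut(X,p,\xi) \simeq \rC_{2}$ and let $\sigma$ be its non-trivial element; since the object is tame and $|G| = 2$, we have $\cha \overline{k} \neq 2$. First I would invoke Lemma~\ref{lem:pointed} to produce a $k(X,p,\xi)$-rational point $\bfp$ of the compression $\bX_{(X,p,\xi)}$ such that $\bfp_{\overline{k}}$ corresponds to $[p]$ under the identification $\bX_{(X,p,\xi),\overline{k}} = X/G$. The whole point is then to show that the tame quotient singularity $(\bX_{(X,p,\xi)}, \bfp)$ is liftable: once this is known, any resolution of singularities $Z \arr \bX_{(X,p,\xi)}$ has a $k(X,p,\xi)$-rational point over $\bfp$, hence a regular rational point, and since $Z \dashrightarrow \bX_{(X,p,\xi)}$ is birational and dominant, Theorem~\ref{thm:moduli} applies to $(X,p,\xi)$ (viewing $\cM$ as a category of structured spaces) and yields the conclusion.

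To prove liftability, the key step is to identify $(\bX_{(X,p,\xi)}, \bfp)$, up to stable equivalence, with the standard $\mmu_{2}$-singularity $(\AA^{d}_{\overline{k}}/\mmu_{2}, [0])$. Base-changing to $\overline{k}$, the stack $\cX_{(X,p,\xi),\overline{k}} = [X/G]$ is smooth, tame, Deligne--Mumford, and generically a scheme (the action of $G$ on the integral space $X$ is faithful, hence generically free, exactly as at the start of the proof of Theorem~\ref{thm:moduli}), with moduli space $X/G = \bX_{(X,p,\xi),\overline{k}}$; the marked point $p$ lifts $\bfp_{\overline{k}}$, and its automorphism group in $[X/G]$ is $G$. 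Applying Lemma~\ref{lem:tqs-closed} (whose proof linearizes the action on an equivariant \'etale neighborhood of $p$ via Cartan's lemma), $(\bX_{(X,p,\xi),\overline{k}}, \bfp_{\overline{k}})$ is equivalent to $(\AA^{d}_{\overline{k}}/G, [0])$, where $G \subseteq \GL_{d}(\overline{k})$ acts through its action on the tangent space $T_{p}X$. Here the isolated fixed point hypothesis enters: under the same local linearization the fixed locus of $\sigma$ near $p$ is the $+1$-eigenspace of $\sigma$ on $T_{p}X$; since $\sigma$ has order $2$ and $\cha\overline{k}\neq 2$ it is semisimple, so isolatedness of $p$ forces this eigenspace to be zero, i.e.\ $\sigma = -\mathrm{Id}$ on $T_{p}X$. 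Therefore $(\bX_{(X,p,\xi)}, \bfp)$ is an isolated, odd-dimensional $\rC_{2}$-singularity in the sense of Definition~\ref{def:R-singularity}.

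Finally I would conclude by Corollary~\ref{cor:isolated-C2}: since $d$ is odd it says precisely that an isolated odd-dimensional $\rC_{2}$-singularity is an $R$-singularity (this is where Theorem~\ref{thm:Rd-C2-isolated} and the coprimality $\gcd(2,d) = 1$ are used). In particular $(\bX_{(X,p,\xi)}, \bfp)$ is liftable, and by the first paragraph $(X,p,\xi)$ is defined over its field of moduli. The only point that requires genuine care is the translation of ``$p$ is an isolated fixed point of the $G$-action on $X$'' into ``the induced singularity on the compression is the standard $\mmu_{2}$-singularity'' (i.e.\ the semisimplicity argument forcing $\sigma = -\mathrm{Id}$, together with checking that the resulting tame quotient singularity over the possibly non-closed field $k(X,p,\xi)$ is indeed a $\rC_{2}$-singularity in the required sense); everything else is a routine application of the machinery already in place, and the real content lies in Theorem~\ref{thm:Rd-C2-isolated}, which we may assume.
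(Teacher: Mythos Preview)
Your proof is correct and follows exactly the approach the paper intends: the paper gives no explicit proof for this corollary, merely writing ``Plugging this into our main result we get the following,'' and your argument spells out precisely that plug-in, mirroring the proof of Theorem~\ref{thm:pointed!} with Corollary~\ref{cor:isolated-C2} in place of Theorem~\ref{thm:Rd-2}. The only extra content you supply is the verification that the isolated-fixed-point hypothesis forces $\sigma = -\mathrm{Id}$ on $T_{p}X$ (which the paper also states just before Corollary~\ref{cor:isolated-C2}), and this is done correctly.
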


Thus, for example, we get that an odd dimensional abelian variety $A$ with automorphism group as small as possible, that is, cyclic of order $2$, is defined over its field of moduli, recovering in particular Shimura's result on odd-dimensional generic abelian varieties that has already been mentioned (see \cite{shimura-field-rationality-abelian}).

\begin{proof}[Proof of Theorem~\ref{thm:Rd-C2-isolated}]
Under the hypotheses of the Theorem, the associated variety $E$ is a Brauer--Severi variety of dimension $d-1$, hence it is split by a finite extension $k'$ of $k$ of degree dividing $d$. By Proposition~\ref{prop:var-split}, $k'$ splits the fundamental gerbe $\cG$, which is banded by a twisted version of $\mmu_{n}$. The result follows from the fact that $\mmu_{n}$ is abelian of degree prime to $[k':k]$.
\end{proof}

If $d$ is a positive integer, what finite groups have the property that if $G$ acts linearly on $\AA^{d}_{K}$, where $K$ is algebraically closed, with characteristic not dividing $|G|$, and $(\AA^{d}_{K})^{G} = \{0\}$, we have that $\AA^{d}_{K}/G$ has an $R$-singularity at the origin, without being $R_{d}$? We do not have any other example.

\bibliographystyle{amsalpha}
\bibliography{main}

\end{document}